\newtheorem{thm}{Theorem}[section]
\newtheorem{prop}[thm]{Proposition}
\newtheorem{lem}[thm]{Lemma}
\theoremstyle{definition}
\numberwithin{equation}{section}
\newcommand{\supp}{\mathsf{supp}}
\newcommand{\R}{{\mathbb{R}}}
\newcommand{\N}{{\mathbb{N}}}
\newcommand{\Q}{{\mathbb{Q}}}
\newcommand{\Z}{{\mathbb{Z}}}
\newcommand{\SL}{\mathrm{SL}}
\newcommand{\GeneratingFunctionx}{X}
\newcommand\Tilt{\mathsf{Tilt}}
\def\asup#1#2{\Vert #2\Vert_{#1, \infty}}
\def\aone#1#2{\Vert #2\Vert_{#1, 1}}
\begin{document}
\title[Bounds for $\SL_2$-indecomposables in tensor powers]
{Bounds for $\SL_2$-indecomposables in tensor powers of the natural representation in characteristic $2$}

\author{Michael Larsen}
\email{mjlarsen@indiana.edu}
\address{Department of Mathematics\\
    Indiana University \\
    Bloomington, IN 47405\\
    U.S.A.}

\thanks{The author was partially supported by NSF grant DMS-2001349.}

\begin{abstract}
Let $K$ be an algebraically closed field of characteristic $2$, $G$ be the algebraic group $\SL_2$ over $K$, and  $V$ be the natural representation of $G$.
Let $b_k^{G,V}$ denote the number of $G$-indecomposable factors of $V^{\otimes k}$, counted with multiplicity,
and let $\delta = \frac 32 - \frac{\log 3}{2\log 2}$.  Then there exists a smooth multiplicatively periodic function $\omega(x)$ such that 
$b_{2k}^{G,V} = b_{2k+1}^{G,V}$ is asymptotic to $\omega(k) k^{-\delta}4^k$.  We also prove a lower bound of the form $c_W k^{-\delta}(\dim W)^k$
for $b_k^{G,W} $ for any tilting representation $W$ of $G$.
\end{abstract}

\maketitle

\section{Introduction}

Let $G=\SL_2$ over an algebraically closed field $K$ of characteristic $2$, and let $V$ be the $2$-dimensional natural representation of $G$.
Let $b_k^{G,V}$ denote the number of $G$-indecomposable factors of $V^{\otimes k}$, counted with multiplicity.
Coulembier, Ostrik, and Tubbenhauer ask \cite[Question 6.1]{COT} if there exist $c_1,c_2,\delta > 0$ such that 
\begin{equation}
\label{COT}
c_1 k^{-\delta}2^k\le b_k^{G,V} \le c_2 k^{-\delta}2^k
\end{equation}
They give a heuristic argument, due to Etingof, predicting that $\delta = \frac 32 - \frac{\log 3}{2\log 2}$.
In this paper, we prove that their prediction is right.  Indeed, something stronger is true.
\begin{thm}
\label{Main}
Defining $\delta$ as above,
there exists a smooth function $\omega\colon \R\to (0,\infty)$ such that $\omega(4^x)$ is periodic (mod $1$) and 
$$b_{2k}^{G,V} = b_{2k+1}^{G,V}\sim \omega(k) k^{-\delta}4^k.$$
\end{thm}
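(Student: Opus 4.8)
The plan is to reduce the theorem to the asymptotic analysis of an explicit integer sequence governed by the tilting fusion rules of $\SL_2$ in characteristic $2$, and then to extract the oscillating asymptotics from a self-similar (Mahler-type) functional equation. In detail: since $V=T(1)$ is tilting and tensor products of tiltings are tilting, $V^{\otimes k}\cong\bigoplus_n T(n)^{\oplus m_k(n)}$ uniquely, so $b_k^{G,V}=\sum_n m_k(n)$; writing $\mathbf m_k=(m_k(n))_{n\ge 0}$ we get $\mathbf m_0=\mathbf e_0$ and $\mathbf m_{k+1}=M\mathbf m_k$, where $M_{m,n}$ is the multiplicity of $T(m)$ in $T(n)\otimes V$. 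The entries of $M$ follow from Donkin's description of the tilting modules of $\SL_2$ together with the Steinberg tensor product theorem: his identity $T\big((p-1)+p\mu\big)\cong\mathrm{St}_1\otimes T(\mu)^{[1]}$ specializes for $p=2$ to $T(2\mu+1)\cong V\otimes T(\mu)^{[1]}$, and an induction on the binary length of $n$, using the Clebsch--Gordan identity $\chi(1)\chi(j)=\chi(j+1)+\chi(j-1)$ for Weyl characters, produces two families of fusion rules: $T(n)\otimes V\cong T(n+1)$ for $n$ odd, and $T(n)\otimes V\cong\bigoplus_j T(2j+1)^{\oplus c(n,j)}$ for $n$ even, with $c(n,j)$ depending self-similarly on the binary digits of $n$. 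The first rule gives the exact equality $b_{2k}^{G,V}=b_{2k+1}^{G,V}$, since tensoring with $V$ is then a bijection on indecomposable summands of the relevant tensor power.

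\emph{Self-similarity and the functional equation.} The matrix $M$ inherits a scaling symmetry from the Frobenius twist: the block of $M$ describing how the $T(n)$ with $n$ of size $\sim 2^j$ fuse with $V$ is, up to a correction supported on boundedly many low binary digits, a dilate of the block at scale $2^{j-1}$. I would encode this by introducing $F(x)=\sum_k b_k^{G,V}x^k$ — together, if needed, with a finite vector of auxiliary generating functions tracking summand counts weighted by the position of the active binary digit, so that the system closes — and proving that it satisfies a functional equation $F(x)=R(x)+S(x)\,F\big(g(x)\big)$ with $R,S$ rational and $g$ a degree-$2$ rational map fixing $x=\tfrac12$ repellingly. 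The decisive numerical input is the competition between the factor $4=(\dim V)^2=\dim\End_G(V^{\otimes 2})$ (reflecting $V^{\otimes 2}=T(2)$) and the generic branching number $3$ of the fusion rule (reflecting $T(2)\otimes V=T(3)\oplus T(1)^{\oplus 2}$); their ratio is what yields $\delta=\tfrac32-\tfrac{\log 3}{2\log 2}=\log_4\tfrac83$, equivalently $4^{-\delta}=\tfrac38$.

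\emph{Asymptotics and smoothness.} Iterating $g$ toward $\tfrac12$ shows that $x=\tfrac12$ is the only singularity of $F$ on its circle of convergence and that $F$ continues analytically to a $\Delta$-domain at $\tfrac12$ (no natural boundary), whence the functional equation forces $F(x)=(1-2x)^{\delta-1}\Phi\big(\log(1-2x)\big)+(\text{less singular})$ near $\tfrac12$ for a periodic $\Phi$ whose Fourier coefficients are the residues of the Mellin transform of $\big(b_k^{G,V}2^{-k}\big)_k$ at its poles, which form an arithmetic progression on a single vertical line. A transfer theorem then gives $b_k^{G,V}\sim 2^k k^{-\delta}\omega_0(\log_4 k)$ with $\omega_0$ of period $1$; for even $k$ this is $b_{2k}^{G,V}\sim\omega(k)k^{-\delta}4^k$ with $\omega(x)=2^{-\delta}\omega_0(\tfrac12+\log_4 x)$, so $\omega(4^x)$ is periodic mod $1$. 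Smoothness of $\omega$ is obtained by upgrading the Mellin transform's meromorphic continuation to one with super-polynomial decay along vertical lines, allowing the Fourier series of $\Phi$, and hence of $\omega$, to be differentiated term by term.

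\emph{Expected main obstacle.} The two hard points should be: (a) closing the self-similar recursion exactly — the bare count $b_k^{G,V}$ probably does not satisfy a clean functional equation by itself, so one must enlarge the state to a finite collection of digit-weighted counts and check that the enlarged system is genuinely (not merely approximately) self-similar; and (b) the smoothness claim for $\omega$, which needs substantially stronger analytic control on the Mellin transform (meromorphic to a full half-plane, poles confined to one vertical line, rapid vertical decay) than is required merely to identify the growth order $2^k k^{-\delta}$ and the value of $\delta$.
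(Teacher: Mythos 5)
Your route is genuinely different from the paper's. The paper works with the sequences $A_s(k)=4^{-k}x_{2^s,k}$ directly: using the product formula $X_n=\prod_i X_{2^{s_i}}$ it writes $4^{-k}b_{2k}^{G,V}$ as a sum over finite subsets $S\subset\N$ of discrete convolutions of the $A_s$, proves that each $A_s$ is (after rescaling) uniformly close to the theta function $\varphi$ of the odd character mod $4$, and builds the limit $\psi(x)=\omega(x)x^{-\delta}$ explicitly as the analogous sum of real convolutions of the rescaled Schwartz functions $\varphi_s$. Smoothness and positivity of $\omega$ come out essentially for free because $\varphi$ is a positive Schwartz function with rapid decay at $0$ and $\infty$ (that is where the functional equation of $L(s,\chi)$ enters, via de la Vall\'ee Poussin). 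You instead propose a Mahler-type functional equation for $F(x)=\sum_k b_k^{G,V}x^k$ and then singularity analysis / Mellin transfer. One encouraging fact: your obstacle (a) is in fact not an obstacle. Setting $G(x)=\sum_n X_n(x)=\prod_{s\ge 0}\bigl(1+X_{2^s}(x)\bigr)$ and $g(x)=\bigl(x/(1-2x)\bigr)^2$, the recurrence \eqref{recurrence} is exactly the statement $X_{2^{s+1}}(x)=X_{2^s}(g(x))$, so $G(x)=\bigl(1+X_1(x)\bigr)\,G\bigl(g(x)\bigr)$ holds cleanly, with $x=1/4$ a repelling fixed point of multiplier $g'(1/4)=4$ and factor $1+X_1(1/4)=3/2$; this indeed forces $4^{-\delta}=3/8$. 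No auxiliary digit-weighted states are needed.

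The real gap is your point (b), and it is not a small one. Singularity analysis applied to such a Mahler equation gives, in the generic situation, only a bounded (often merely H\"older-continuous) periodic correction $\Phi$; the self-similar renormalization near the fixed point controls the size of $\Phi$ but says nothing about differentiability, and it is easy to construct Mahler-type systems with the same multiplier data whose periodic factor is genuinely non-smooth. To push Mellin-vertical-decay through you would need to show that the Mellin transform of $b_{2k}4^{-k}$ (or of the singular part of $G$ near $1/4$) continues to a half-plane with poles confined to a single arithmetic progression on one vertical line and with super-polynomial decay along that line. That decay is precisely what the $\Gamma$-factor in the completed $L$-function supplies, but your outline gives no mechanism to produce it: nothing in the Mahler equation $G=(1+X_1)\cdot G\circ g$ by itself encodes the Gaussian decay that ultimately underlies smoothness. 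In the paper this structure is made visible by recognizing the coefficient formula \eqref{GF} as a Riemann sum for the theta series \eqref{Dirichlet} and then using the modular transformation \eqref{FE}; without some equivalent of that step, I do not see how the Mellin route closes. So: correct identification of $\delta$, correct general shape of the asymptotic, plausible mechanism for the log-periodicity, but the smoothness claim — the substantive strengthening in Theorem~\ref{Main} over the original \cite[Question 6.1]{COT} — is not established by what you sketch.
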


We construct the function $\psi(x) = \omega(x)x^{-\delta}$ explicitly 
as an infinite convolution of distributions of the form $\delta_0 + \varphi_s$, where $\delta_0$ is the delta function concentrated at $0$,
and the $\varphi_s$ are rescalings of the theta series associated to
the (mod $4$) primitive Dirichlet character $\chi$.
This theta series is positive and has rapid decay at both $0$ and $\infty$.
These facts follow almost immediately from \eqref{FE}, which is essentially the functional equation of the Dirichlet L-function $L(s,\chi)$.

Our proof interprets $b_{2k}^{G,V}$ as the number of paths of length $k$, counted with multiplicity, of the \emph{fusion graph} of $V^{\otimes 2}$, the directed graph 
giving the decomposition 
into indecomposable factors of the tensor product of a given indecomposable tilting module with $V^{\otimes 2}$.  In the characteristic zero case, 
the vertex set of the graph would be $\N$, and the arrows would connect pairs of consecutive non-negative integers, so we would 
end up counting left factors of Dyck paths of length $k$.  In our characteristic $2$ case, the graph reflects the dyadic nature of the representation category of $G$.
See Figure 1 below (which shows only the component of the graph corresponding to even highest weights.)

Classifying paths of length $k$ according to their final endpoint $n$, we observe two striking departure from the familiar characteristic zero behavior.
Firstly, the number of paths with given $k$ is roughly inversely proportional to $2$ to the power of the number of $1$'s in the binary expansion of $n$.
Second, for fixed $k$, the number of paths terminating in $n$ falls off sharply when $n$ is significantly smaller than $\sqrt k$ as well when $n$ is significantly
larger than $\sqrt k$ (this latter case being in line with characteristic zero behavior).  
This can be regarded as the discrete analogue of the rapid decay of our theta function at $\infty$.
In proving these claims, we are helped greatly by the fact that the generating function $\GeneratingFunctionx_{2^s}(t)$ of paths of length $k$ terminating in $2^s$ satisfies the recursive formula 
\begin{equation}
\label{recurrence}
\GeneratingFunctionx_{2^{s+1}}(t) = \frac{\GeneratingFunctionx_{2^s}(t)^2}{1-2\GeneratingFunctionx_{2^s}(t)^2}.
\end{equation}

I would like to acknowledge a very helpful correspondence with the authors of \cite{CEOT}.  Shortly after answering \cite[Question 6.1]{COT},  I learned that they had independently done so and had, indeed, extended their result to all positive characteristics. 
Our methods were different enough that we agreed it would make sense to write separate papers rather than combine forces.  

An early version of \cite{CEOT} asked whether there exists a continuous function $\omega$ as in Theorem~\ref{Main}.  I realized that my approach 
would give a direct construction of $\omega$.  Their most recent draft gives a non-constructive answer to the same question.

Their paper also asks for asymptotic formulas for tensor powers of tilting modules other than $V$.  In the case $p=2$, this paper gives a lower bound of the form $c_W k^{-\delta}(\dim W)^k$,
but we do not yet have an upper bound of the same form, let alone an asymptotic formula as in Theorem~\ref{Main}.

\section{Tilting representations and the fusion graph}

For every non-negative integer $n$, let $T(n)$ denote the (unique) indecomposable tilting module of $G$ with highest weight $n$.
Thus $V$ is isomorphic to $T(1)$.
Every tensor product of tilting modules is again a tilting module \cite{Donkin} and is therefore determined by its formal character,
which we express as an element of $\Z[t,t^{-1}]$.  

The formal characters of the $T(n)$ are well known.
Following  \cite[Proposition 2.6]{TW}, if
$$n+1 = 2^j + a_{j-1}2^{j-1}+\cdots + a_0,$$
with $a_i\in \{0,1\}$,
we define the \emph{support} of $n$ to consist of all integers $m$ in the set
$$\supp(n) = \{2^j \pm a_{j-1}2^{j-1}\pm\cdots \pm a_0\}.$$
From \cite[Proposition 5.4]{TW}, the formal character of $T(n)$ is
\begin{equation}
\label{FC sum}
\sum_{m\in \supp(n)} \frac{t^m - t^{-m}}{t-t^{-1}},
\end{equation}
where $m$ ranges over the support of $n$.  This can be expressed as
\begin{equation}
\label{FC}
\chi_n(t) = \frac{(t^{2^j}-t^{-2^j})\prod_{\{i\mid a_i=1\}} (t^{2^i} + t^{-2^i})}{t-t^{-1}} = \frac{t^{2^j}-t^{-2^j}}{t-t^{-1}} \prod_{\{i\mid a_i=1\}}(t^{2^i} + t^{-2^i}).
\end{equation}
From this, it follows immediately that if $n+1$ is even, $\chi_1(t)\chi_n(t) = \chi_{n+1}(t)$, so
$$V\otimes T(n)\cong T(n+1).$$
By induction on $r\ge 1$,
$$(t+t^{-1}) \prod_{i=0}^{r-1} (t^{2^i}+t^{-2^i}) = (t^{2^r}+t^{-2^r}) + 2\sum_{i=1}^{r-1}\prod_{j=1}^{i-1}(t^{2^j}+t^{-2^j})$$
From this it follows that if $2^r$ is the highest power of $2$ dividing $n+1$,
$$V^{\otimes 2}\otimes T(2n) \cong T(2n+2) \oplus \bigoplus_{i=1}^{r+1} T(2n+2 - 2^i)^{\oplus 2},$$
unless $n=2^r-1$, in which case we omit the $T(0)$ terms from the above sum:
$$V^{\otimes 2}\otimes T(2n) \cong V\otimes T(2n+1)\cong T(2n+2) \oplus \bigoplus_{i=1}^r T(2n+2 - 2^i)^{\oplus 2}.$$

Consider the labelled directed graph 
on non-negative integers $n$, where there is an arrow from $n$ to $n+1$ labelled $1$
and arrows labelled $2$ from $n$ to $n+1-2^i$ for $0\le i\le r$, where $r$ is the number of factors of $2$ in $n+1$
except that we omit all arrows leading to $0$.  
The multiplicity $x_{n,k}$ of $T(2n)$ as an indecomposable factor in $V^{\otimes 2k}$ is therefore the sum over all directed paths of length $k$ from $0$ to $n$ of
the product of labels.

Let $X_n = X_n(t) = \sum_k x_{n,k}t^k$ denote the generating function of this sum, so
$b_k^{G,V}$ is the sum over $n$ of the $t^k$ coefficient of $X_n(t)$.  As there are no edges from any vertex to $0$,  we have $X_0(t) = 1$.
For $m\ge 1$,
$$X_n = t\Bigl(X_{n-1} + 2\sum_{i=0}^r X_{2^i+n-1}\Bigr),$$
where $2^r$ is now the highest power of $2$ dividing $n$.  We rewrite this equation
\begin{equation}
\label{X-equations}
X_n = t L_n(X_0,X_1,X_2,\ldots),
\end{equation}
where we define
$$L_n(y_0,y_1,y_2,\ldots) = y_{n-1} + 2\sum_{i=0}^r y_{2^i+n-1}.$$

For $n< 2^s$, $L_n$ is a linear combination of $y_0,y_1,\ldots,y_{2^s-1}$, so the system 
of equations
\begin{equation}
\label{system}
y_i = t L_i(y_0,\ldots,y_{2^s-1}),\ 1\le i< 2^s
\end{equation}
consists of $2^s-1$ equations in $2^s$ variables.  The matrix of this system of homogeneous linear equations has rank $2^s-1$ over $\Q((t))$ because its (mod $t$)
reduction has rank $2^s-1$.  Therefore, the solution set of \eqref{system} over $\Q((t))$ is $1$-dimensional.

For $0<i<2^s$,
$$L_{2^s+i}(y_0,y_1,\ldots) = L_i(y_{2^s},y_{2^s+1},\ldots)$$
so $(X_{2^s},X_{2^s+1},\ldots,X^{2^{s+1}-1})$ is the scalar multiple of
$(X_0,X_1,\ldots,X_{2^s-1})$ by an element of $\Q((t))$ (which is, in fact, $X_{2^s}$, since $X_0=1$.)  Therefore,
\begin{equation}
\label{binary rule}
X_{2^s+i} = X^{2^s}X_i
\end{equation}
for $0\le i<2^s$, which implies the general formula
\begin{equation}
\label{product}
X_{2^{s_1}+\cdots+2^{s_q}} = \prod_{i=1}^q X_{2^{s_i}}
\end{equation}
if $s_1>s_2>\cdots>s_q$.  The sequence $X_n$ is therefore determined by its subsequence as $n$ ranges over powers of $2$.

\begin{prop}
\label{F}
Let $F(x) = x^2-2$, and let $F^{\circ s}(x)$ denote the $s$-fold iterate of $F$.  Then
$$(F^{\circ s}(1/t-2))X_{2^s}(t) = 1.$$
\end{prop}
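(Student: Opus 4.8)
The plan is to reduce the statement to the recurrence \eqref{recurrence} and then pass to reciprocals. After inverting both sides, \eqref{recurrence} reads $1/X_{2^{s+1}} = 1/X_{2^s}^2 - 2 = F(1/X_{2^s})$ with $F(x)=x^2-2$; this makes sense in $\Q((t))$ since $X_{2^s}(t)=t^{2^s}+\cdots$ is invertible there. So, setting $W_s := 1/X_{2^s}(t)$, we get $W_{s+1}=F(W_s)$ and hence $W_s = F^{\circ s}(W_0)$ by a trivial induction on $s$. To pin down the base case, \eqref{X-equations} with $n=1$ (so $r=0$) gives $X_1 = t(X_0+2X_1)=t(1+2X_1)$, whence $X_1 = t/(1-2t)$ and $W_0 = 1/X_1 = 1/t-2$. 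Therefore $F^{\circ s}(1/t-2)\,X_{2^s}(t) = W_s X_{2^s} = 1$, which is the assertion.

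So the real work is to prove \eqref{recurrence}, which, although asserted in the introduction, I would establish here as follows. Specialize \eqref{X-equations} to $n=2^s$, where the exponent $r$ equals $s$, to get
$$X_{2^s} = t\Bigl(X_{2^s-1} + 2\textstyle\sum_{i=0}^{s} X_{2^i+2^s-1}\Bigr),$$
and rewrite each term on the right using the multiplicativity in \eqref{binary rule} and \eqref{product}. Since $2^s-1 = 2^{s-1}+\cdots+1$, one has $X_{2^s-1} = \prod_{j=0}^{s-1}X_{2^j}$; and since $2^i+2^s-1 = 2^s+(2^i-1)$ with $0\le 2^i-1<2^s$, one has $X_{2^i+2^s-1} = X_{2^s}X_{2^i-1} = X_{2^s}\prod_{j=0}^{i-1}X_{2^j}$ for $1\le i\le s$, while the $i=0$ term is simply $X_{2^s}$. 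Abbreviating $Q_i := \prod_{j=0}^{i-1}X_{2^j}$ (so $Q_0=1$ and $Q_{i+1}=Q_iX_{2^i}$) and collecting, I expect the display to collapse to
$$X_{2^s}\,A_s = t\,Q_s,\qquad A_s := 1-2t-2t\textstyle\sum_{i=1}^{s}Q_i,$$
which I will call $(\star_s)$: after multiplying through by $t$, the $i=0$ term accounts for the ``$-2t$'', the terms $i=1,\dots,s$ for the sum $2t\sum_{i=1}^{s}Q_i$, and $X_{2^s-1}=Q_s$ stays on the right. Since each $Q_i$ with $i\ge 1$ has no constant term (it starts in degree $2^i-1$), the series $A_s$ has constant term $1$ and is therefore a unit in $\Q[[t]]$.

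The passage from $(\star_s)$ to \eqref{recurrence} is then formal. From the definition, $A_{s+1}=A_s-2tQ_{s+1}$; and $(\star_s)$ gives $Q_s = A_sX_{2^s}/t$, hence $Q_{s+1}=Q_sX_{2^s}=A_sX_{2^s}^2/t$, so that $A_{s+1}=A_s(1-2X_{2^s}^2)$. Substituting into $(\star_{s+1})$ and cancelling the unit $A_s$ yields $X_{2^{s+1}}(1-2X_{2^s}^2)=X_{2^s}^2$, which is \eqref{recurrence}.

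I expect the only genuine obstacle to lie in establishing $(\star_s)$: one must track the binary carry $2^i+2^s-1 = 2^s+(2^i-1)$ so as to apply \eqref{binary rule} to the correct lower index, and check that every term on the right except $X_{2^s-1}$ picks up a factor of $X_{2^s}$ so that the equation rearranges exactly as claimed. Everything downstream of $(\star_s)$ — eliminating $A_s$, inverting, and iterating $F$ — is routine manipulation in $\Q((t))$.
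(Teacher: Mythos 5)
Your proof is correct and takes essentially the same route as the paper: both start from \eqref{X-equations} at $n=2^s$, use \eqref{binary rule} and \eqref{product} to reduce everything to products of the $X_{2^j}$, and then show that the reciprocal sequence is obtained by iterating $F$. The only difference is bookkeeping — the paper rearranges and divides by $tX_{2^s}X_{2^s-1}$ to get a formula for $1/X_{2^s}$ directly (its equation \eqref{X inverse}), while you keep things polynomial by introducing $A_s$ and $Q_i$, derive $(\star_s)$, and cancel the unit $A_s$ — which leads to the same recursion.
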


We remark that the fact that the sequence $X_{2^s}^{-1}$ is obtained by iterating $F$ is equivalent to
\eqref{recurrence}.

\begin{proof}
From Figure 1, 
\begin{equation}
\label{X_1}
X_1(t) = \sum_{i=1}^\infty 2^{i-1} t^i = \frac t{1-2t},
\end{equation}
so
$$X_1(t)\bigl(\frac 1t -2\bigr) = 1,$$
and the proposition holds for $n=0$.
By \eqref{X-equations} and \eqref{binary rule},
$$X_{2^s} = tX_{2^s-1}+2t\sum_{i=0}^s X_{2^s+2^i-1} 
= tX_{2^s-1}+ 2tX_{2^s}\sum_{i=0}^s X_{2^i-1}.$$
so rearranging terms and dividing by $tX_{2^s}X_{2^s-1}$, we obtain
\begin{equation}
\label{X inverse}
\frac 1{X_{2^s}} = \frac 1{t\prod_{i=0}^{s-1} X_{2^i}}-2\sum_{i=0}^s\frac 1{\prod_{j=i}^{s-1}X_{2^j}}.
\end{equation}
Dividing both sides by $X_{2^s}$ and subtracting $2$,
$$\frac 1{X_{2^s}^2}-2 = \frac 1{t\prod_{i=0}^s X_{2^i}}-2\sum_{i=0}^s\frac 1{\prod_{j=i}^sX_{2^j}} - 2 = \frac 1{X_{2^{s+1}}},$$
where the last equality comes from substituting $s+1$ for $s$ in \eqref{X inverse}.
The proposition follows by induction on $s$.
\end{proof}

The proposition justifies identifying the power series $X_{2^s}(t)$ with the rational function $(F^{\circ s}(1/t-2))^{-1}$.

%
%
\vskip 10pt
\begin{figure}
\label{0-9}
\begin{tikzpicture}[node distance = .6cm,
roundnode/.style={circle, draw=black, fill=white,  thick, minimum size=4mm},
]
\node[roundnode] (0) {0};
\node[roundnode] (1) [right=of 0]{1};
\node[roundnode] (2) [right=of 1]{2};
\node[roundnode] (3) [right=of 2]{3};
\node[roundnode] (4) [right=of 3]{4};
\node[roundnode] (5) [right=of 4]{5};
\node[roundnode] (6) [right=of 5]{6};
\node[roundnode] (7) [right=of 6]{7};
\node[roundnode] (8) [right=of 7]{8};
\node[roundnode] (9) [right=of 8]{9};

\draw[->] (0) -- (1) node[midway,above] {1};
\draw[->] (1) -- (2) node[midway,above] {1};
\draw[->] (2) -- (3) node[midway,above] {1};
\draw[->] (3) -- (4) node[midway,above] {1};
\draw[->] (4) -- (5) node[midway,above] {1};
\draw[->] (5) -- (6) node[midway,above] {1};
\draw[->] (6) -- (7) node[midway,above] {1};
\draw[->] (7) -- (8) node[midway,above] {1};
\draw[->] (8) -- (9) node[midway,above] {1};
\draw[->] (3)        to [bend left=45] node [below] (L32) {2} (2);
\draw[->] (5)        to [bend left=45] node [below] (L54) {2} (4);
\draw[->] (7)        to [bend left=45]  node [below] (L76) {2} (6);
\draw[->] (9)        to [bend left=45]  node [below] (L98) {2} (8);
\draw[->] (7)        to [bend left=70] node [below] (L74) {2}  (4);
\path (1) edge [loop above] node {2} (1);
\path (2) edge [loop above] node {2} (2);
\path (3) edge [loop above] node {2} (3);
\path (4) edge [loop above] node {2} (4);
\path (5) edge [loop above] node {2} (5);
\path (6) edge [loop above] node {2} (6);
\path (7) edge [loop above] node {2} (7);
\path (8) edge [loop above] node {2} (8);
\path (9) edge [loop above] node {2} (9);
\end{tikzpicture}
\caption{The fusion graph of $V^{\otimes 2}$}
\end{figure}
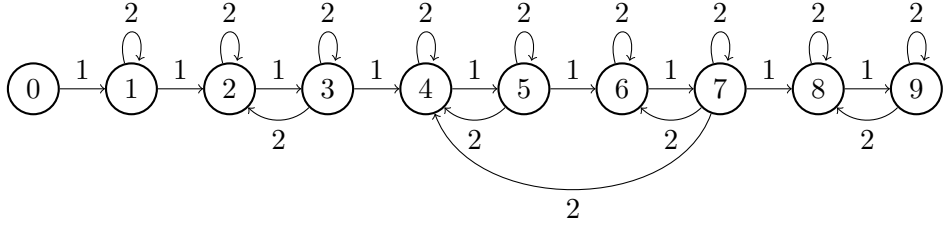

\begin{prop}
\label{x_m formula}
For all $s\ge 0$, we have
\begin{equation}
\label{closed form}
\GeneratingFunctionx_{2^s}(t) = \frac{t^{2^s}}{\prod_{i=1}^{2^s} \bigl(1-\bigl(2+2\cos \frac{(2i-1)\pi}{2^{s+1}}\bigr)t\bigr)}.
\end{equation}
\end{prop}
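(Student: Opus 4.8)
The plan is to combine Proposition~\ref{F} with the classical Chebyshev-type identity for $F(x) = x^2-2$: under the substitution $x = 2\cos\theta$ one has $F(2\cos\theta) = 4\cos^2\theta - 2 = 2\cos 2\theta$, and hence by an immediate induction
$$F^{\circ s}(2\cos\theta) = 2\cos(2^s\theta).$$
Since Proposition~\ref{F} lets us identify $X_{2^s}(t)$ with the rational function $(F^{\circ s}(1/t-2))^{-1}$, it suffices to factor the polynomial $F^{\circ s}$ and then substitute $x = 1/t-2$.

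First I would pin down the factorization of $F^{\circ s}$. As $F$ is monic of degree $2$, the iterate $F^{\circ s}$ is monic of degree $2^s$. From $F^{\circ s}(2\cos\theta) = 2\cos(2^s\theta)$, the number $2\cos\theta$ is a root of $F^{\circ s}$ precisely when $2^s\theta \equiv \tfrac\pi2 \pmod{\pi}$, i.e. $\theta = \tfrac{(2i-1)\pi}{2^{s+1}}$ for some integer $i$; restricting to $\theta \in (0,\pi)$ gives exactly $i = 1,\ldots,2^s$, and since cosine is injective on $(0,\pi)$ these produce $2^s$ distinct real numbers $2\cos\tfrac{(2i-1)\pi}{2^{s+1}}$. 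A monic polynomial of degree $2^s$ with $2^s$ distinct known roots is the product of the corresponding linear factors, so
$$F^{\circ s}(x) = \prod_{i=1}^{2^s}\Bigl(x - 2\cos\tfrac{(2i-1)\pi}{2^{s+1}}\Bigr).$$

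It then remains to substitute $x = 1/t - 2$ and clear denominators:
$$X_{2^s}(t) = \frac{1}{F^{\circ s}(1/t-2)} = \prod_{i=1}^{2^s}\frac{1}{\,1/t - 2 - 2\cos\tfrac{(2i-1)\pi}{2^{s+1}}\,} = \frac{t^{2^s}}{\prod_{i=1}^{2^s}\bigl(1 - (2 + 2\cos\tfrac{(2i-1)\pi}{2^{s+1}})t\bigr)},$$
which is the asserted closed form \eqref{closed form}.

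I do not anticipate a real obstacle here; the only points requiring care are the bookkeeping ones — confirming that $F^{\circ s}$ is monic of degree exactly $2^s$ and that the $2^s$ angles $\tfrac{(2i-1)\pi}{2^{s+1}}$ with $1 \le i \le 2^s$ are pairwise distinct and lie in $(0,\pi)$, so that the factorization over $\C$ carries no spurious constant. As an alternative that bypasses Proposition~\ref{F}, one could argue by induction on $s$ directly: the right-hand side $\Phi_s(t)$ of \eqref{closed form} satisfies $\Phi_0(t) = t/(1-2t)$ (agreeing with \eqref{X_1}) and, one checks, the recursion $\Phi_{s+1} = \Phi_s^2/(1-2\Phi_s^2)$ of \eqref{recurrence}; the inductive step again reduces to the relation between the level-$s$ and level-$(s+1)$ roots under $x \mapsto x^2 - 2$, so the two approaches involve the same trigonometric computation.
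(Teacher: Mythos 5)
Your proof is correct and follows essentially the same route as the paper's: both invoke Proposition~\ref{F}, use the identity $F^{\circ s}(2\cos\theta)=2\cos(2^s\theta)$ to identify the $2^s$ distinct roots of the monic degree-$2^s$ polynomial $F^{\circ s}$, factor it into linear terms, and substitute $x=1/t-2$. Your bookkeeping (monicity, degree count, injectivity of cosine on $(0,\pi)$) matches the paper's observation that the cosines are strictly decreasing and hence distinct.
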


\begin{proof}
By induction on $s$, $F^{\circ s}(2\cos\theta) = 2 \cos 2^s\theta$, 
so $F^{\circ s}(y-2) = 0$ if $y$ is of the form $2+2\cos\frac{(2i-1)\pi}{2^{s+1}}$, $i=0,1,\ldots,2^{m}-1$.
As $\cos\frac{(2i-1)\pi}{2^{s+1}}$ is strictly decreasing as $i$ ranges from $1$ to $2^s$, this
gives $2^s$ distinct values.
It must include all roots of $F^{\circ s}(y-2) = 0$, since this is a polynomial equation of degree $2^s$.

Thus, 
\begin{align*}
\frac 1{X_{2^s}\bigl(\frac 1t\bigr)} &= \prod_{i=1}^{2^s}  \bigl(\frac 1t-\bigl(2+2\cos \frac{(2i-1)\pi}{2^{s+1}}\bigr)\bigr)\\
&= \frac{\prod_{i=1}^{2^s}  \bigl(1-\bigl(2+2\cos \frac{(2i-1)\pi}{2^{s+1}}\bigr)t\bigr)}{t^{2^s}},
\end{align*}
which implies the proposition.

\end{proof}

\begin{lem}
\label{partial fractions}
If $P(t)$ is a monic polynomial with distinct roots $r_1,\ldots,r_n$, then
$$\frac 1{\prod_{j=1}^n(1-r_jt)} = \sum_{j=1}^n \frac{r_j^{n-1}}{P'(r_j)(1-r_j t)}.$$
\end{lem}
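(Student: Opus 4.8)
The plan is to clear denominators and deduce the identity from Lagrange interpolation. Since the only possible poles of either side occur at the points $t = 1/r_j$ with $r_j \neq 0$, and both sides are honest rational functions, it suffices to prove the polynomial identity obtained by multiplying through by $\prod_{j=1}^n (1 - r_j t)$, namely
\[
1 = \sum_{j=1}^n \frac{r_j^{n-1}}{P'(r_j)}\prod_{k \neq j}(1 - r_k t).
\]
The right-hand side is a polynomial in $t$ of degree at most $n-1$, so I only need to check it at $n$ distinct values of the variable.

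The cleanest way to do this is to substitute $t = 1/u$ and multiply by $u^{n-1}$; using the identity $u^{n-1}\prod_{k\neq j}(1 - r_k/u) = \prod_{k \neq j}(u - r_k)$, the claim becomes
\[
u^{n-1} = \sum_{j=1}^n \frac{r_j^{n-1}}{P'(r_j)} \prod_{k\neq j}(u - r_k).
\]
Because $P$ is monic with simple roots, $P'(r_j) = \prod_{k\neq j}(r_j - r_k)$, so the right-hand side is precisely the degree-$(n-1)$ Lagrange interpolant through the nodes $r_1,\dots,r_n$ with prescribed values $r_j^{n-1}$; since $u \mapsto u^{n-1}$ already has degree $n-1$ and takes those values at those $n$ points, the two polynomials agree identically. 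Reversing the substitution recovers the displayed polynomial identity, and dividing back by $\prod_j(1 - r_j t)$ gives the lemma.

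I do not expect a genuine obstacle here: the only things needing a little care are the bookkeeping across the substitution $t \leftrightarrow 1/u$ and the benign possibility that one of the roots vanishes, which the interpolation argument handles automatically. As an alternative I could argue directly by partial fractions: $1/\prod_k(1 - r_k t)$ is a proper rational function with simple poles, hence equals $\sum_j c_j/(1 - r_j t)$, and computing the residue-type limit $c_j = \lim_{t \to 1/r_j}\frac{1 - r_j t}{\prod_k(1 - r_k t)} = \frac{1}{\prod_{k\neq j}(1 - r_k/r_j)}$ together with $\prod_{k\neq j}(1 - r_k/r_j) = r_j^{-(n-1)}\prod_{k\neq j}(r_j - r_k) = r_j^{-(n-1)}P'(r_j)$ yields $c_j = r_j^{n-1}/P'(r_j)$.
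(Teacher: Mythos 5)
Your proof is correct, and your primary route is genuinely different from the paper's. The paper argues analytically: it observes that both sides are rational functions with simple poles at $1/r_1,\ldots,1/r_n$ and no others, both vanish at infinity, and their residues at each pole agree (computed via $\prod_{j\neq i}(1-r_j/r_i)=P'(r_i)/r_i^{n-1}$). Your main argument is instead purely algebraic: clear denominators to reduce to a degree-$(n-1)$ polynomial identity, substitute $t=1/u$, and recognize the right-hand side as the Lagrange interpolant of $u^{n-1}$ at the nodes $r_1,\ldots,r_n$. The interpolation route has the small advantage of handling a zero root with no special pleading, since Lagrange interpolation is indifferent to whether a node vanishes, whereas the residue-comparison argument tacitly relies on each $r_j$ contributing a pole. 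Your closing ``alternative'' paragraph, computing the partial-fraction coefficients $c_j$ directly as limits, is essentially the paper's own argument (with the pleasant bonus that taking $c_j=\lim_{t\to 1/r_j}(1-r_jt)/\prod_k(1-r_kt)$ cleanly sidesteps the sign bookkeeping that a literal residue computation would require).
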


\begin{proof}
Both are rational functions with simple poles at $1/r_1,\ldots,1/r_n$ and no other poles, so it suffices to check that the residues are the same.  Indeed, the residue of the left hand side at the pole $1/r_i$ is
$$\frac {1/r_i}{\prod_{j\neq i} (1-r_j/r_i)} = \frac{r_i^{n-2}}{P'(r_i)},$$
which is the same as that on the right hand side.
\end{proof}

Let 
\begin{equation}
\label{P_s}
P_s(x) = \prod_{i=1}^{2^s} \Bigl(x - \bigl(2+2\cos\frac{(2i-1)\pi}{2^{s+1}}\bigr)\Bigr).
\end{equation}

\begin{lem}
\label{P_m prime}
For any integer $j\in [1,2^s]$,
$$P'_s\Bigl(2+2\cos\frac{(2j-1)\pi}{2^{s+1}}\Bigr) =  \frac{(-1)^{j+1}2^s}{\sin\frac{(2j-1)\pi}{2^{s+1}}}.$$
\end{lem}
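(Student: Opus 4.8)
The plan is to identify $P_s(x)$ with an iterate of $F(x)=x^2-2$ and then differentiate using the chain rule.

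First I would note that $F^{\circ s}(x-2)$ is a monic polynomial of degree $2^s$ (since $F^{\circ s}$ is monic of degree $2^s$), and by the computation in the proof of Proposition~\ref{x_m formula} its $2^s$ roots are exactly the distinct numbers $2+2\cos\frac{(2i-1)\pi}{2^{s+1}}$, $i=1,\dots,2^s$. As $P_s(x)$ is by definition the monic polynomial with precisely those roots, $P_s(x)=F^{\circ s}(x-2)$, hence $P_s'(x)=(F^{\circ s})'(x-2)$.

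Next, iterating the chain rule on $F^{\circ s}=F\circ F^{\circ(s-1)}$ and using $F'(u)=2u$ gives
$$(F^{\circ s})'(y)=\prod_{k=0}^{s-1}F'\bigl(F^{\circ k}(y)\bigr)=2^s\prod_{k=0}^{s-1}F^{\circ k}(y).$$
Evaluating at $y=2\cos\theta_j$ with $\theta_j=\frac{(2j-1)\pi}{2^{s+1}}$, and invoking the identity $F^{\circ k}(2\cos\theta)=2\cos 2^k\theta$ from the proof of Proposition~\ref{x_m formula}, this equals $2^{2s}\prod_{k=0}^{s-1}\cos(2^k\theta_j)$. Writing $\phi=\frac{(2j-1)\pi}{2}$, we have $2^k\theta_j=\phi/2^{s-k}$, so the product becomes $\prod_{l=1}^s\cos\frac{\phi}{2^l}$; the half-angle telescoping identity $\sin\phi=2^s\sin\frac{\phi}{2^s}\prod_{l=1}^s\cos\frac{\phi}{2^l}$ gives $\prod_{l=1}^s\cos\frac{\phi}{2^l}=\frac{\sin\phi}{2^s\sin(\phi/2^s)}$. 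Since $\sin\phi=\sin\frac{(2j-1)\pi}{2}=(-1)^{j+1}$ and $\phi/2^s=\theta_j$, assembling the factors yields
$$P_s'\Bigl(2+2\cos\tfrac{(2j-1)\pi}{2^{s+1}}\Bigr)=2^{2s}\cdot\frac{(-1)^{j+1}}{2^s\sin\theta_j}=\frac{(-1)^{j+1}2^s}{\sin\frac{(2j-1)\pi}{2^{s+1}}}.$$

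The one place calling for care — and what I would flag as the main (minor) obstacle — is the telescoping step: one must not introduce a factor $\cos\frac{(2j-1)\pi}{2}$, which vanishes, but instead absorb the outermost cosine into the phase by taking $\phi=\frac{(2j-1)\pi}{2}$ from the start, after which the half-angle identity applies without incident and the sign $(-1)^{j+1}$ emerges cleanly from $\sin\frac{(2j-1)\pi}{2}$. An alternative route — computing $P_s'(x_j)=\prod_{i\neq j}(x_j-x_i)$ directly via $\cos\alpha-\cos\beta=-2\sin\frac{\alpha+\beta}{2}\sin\frac{\alpha-\beta}{2}$ — also works but requires more trigonometric bookkeeping, so I would prefer the iterate-of-$F$ argument.
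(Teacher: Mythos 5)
Your proof is correct, but it takes a genuinely different route from the paper's. The paper works with the roots of unity $\zeta=e^{\pi i/2^{s+1}}$ directly: it expands each difference of roots $\beta_1-\beta_{2j-1}$ as $\zeta(1-\zeta^{2j-2})(1-\zeta^{-2j})$, invokes the cyclotomic product identity $\prod_{j=1}^{2^{s+1}-1}(1-\zeta^{2j})=2^{s+1}$ to evaluate $P_s'(\beta_1)=\prod_{j\geq 2}(\beta_1-\beta_{2j-1})$ in the case $j=1$, and then disposes of the remaining $j$ by Galois conjugation over $\Q$. You instead exploit the identification $P_s(x)=F^{\circ s}(x-2)$ (already latent in the paper's Propositions~\ref{F} and~\ref{x_m formula}) and compute the derivative by the chain rule, $(F^{\circ s})'(y)=2^s\prod_{k=0}^{s-1}F^{\circ k}(y)$, then collapse $\prod_{k=0}^{s-1}\cos(2^k\theta_j)$ with the telescoping half-angle identity $\sin\phi=2^s\sin(\phi/2^s)\prod_{l=1}^s\cos(\phi/2^l)$. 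Your route handles all $j$ uniformly in one pass, avoids complex roots of unity and the Galois step, and is more directly tied to the recursive structure $F^{\circ s}$ that drives the rest of the paper; the paper's route is a more classical residue-style computation through $\prod_{i\neq j}(x_j-x_i)$. You are also right to flag the care needed at the outermost step of the telescope: one must set $\phi=(2j-1)\pi/2$ from the start so that the product stops at $\cos(\phi/2)=\cos\theta_j\cdot(\text{intermediate angles})$ and the sign $(-1)^{j+1}$ is produced by $\sin\phi$ rather than a spurious vanishing factor $\cos((2j-1)\pi/2)$.
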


\begin{proof}
Let $\zeta = \zeta_{2^{s+2}} = e^{\pi i/2^{s+1}}$.  Then the roots of $P_s(x)$ are $\beta_{2j-1} = 2+\zeta^{2j-1}+\zeta^{1-2j}$
as $j$ ranges from $1$ to $2^s$.  For $j\ge 2$, 
$$\beta_1 - \beta_{2j-1} = \zeta(1-\zeta^{2j-2})(1-\zeta^{-2j}).$$
Since 
$$\prod_{j=1}^{2^{s+1}-1} (1-\zeta^{2j}) = 2^{s+1},$$
we have
$$\prod_{j=2}^{2^s} (\beta_1 - \beta_{2j-1}) = \zeta^{2^s-1}\frac{2^{s+1}}{1-\zeta^{-2}} = \frac{2^{s+1} i}{\zeta-\zeta^{-1}}
= \frac{2^s}{\sin \pi/2^{s+1}},$$
which proves the theorem in the $j=1$ case.  The remaining cases follow by Galois conjugation.
\end{proof}

\begin{prop}
For $k\ge 2^s$, the $t^k$ coefficient of $\GeneratingFunctionx_{2^s}(t)$ is given by
\begin{equation}
\label{GF}
2^{-s}\sum_{j=1}^{2^s} (-1)^{j+1}\sin \frac{(2j-1)\pi}{2^{s+1}}\bigl(2+2\cos \frac{(2j-1)\pi}{2^{s+1}}\bigr)^{k-1}.
\end{equation}
\end{prop}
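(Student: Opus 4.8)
The plan is to read off the $t^k$ coefficient of the rational function $\GeneratingFunctionx_{2^s}(t)$ from its closed form in Proposition~\ref{x_m formula} by a partial fraction expansion, using Lemma~\ref{partial fractions} for the shape of the expansion and Lemma~\ref{P_m prime} to evaluate the residues. Set $r_j = 2+2\cos\frac{(2j-1)\pi}{2^{s+1}}$ for $j=1,\ldots,2^s$. By Proposition~\ref{x_m formula} and its proof these are the $2^s$ \emph{distinct} roots of the monic polynomial $P_s$ of \eqref{P_s}, and
$$\GeneratingFunctionx_{2^s}(t) = \frac{t^{2^s}}{\prod_{j=1}^{2^s}(1-r_j t)}.$$

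First I would apply Lemma~\ref{partial fractions} with $P = P_s$ and $n = 2^s$, which is legitimate since the $r_j$ are the roots of a monic polynomial and are pairwise distinct. This gives
$$\GeneratingFunctionx_{2^s}(t) = t^{2^s}\sum_{j=1}^{2^s}\frac{r_j^{2^s-1}}{P_s'(r_j)(1-r_j t)}.$$
Expanding $\frac{1}{1-r_j t} = \sum_{k\ge 0} r_j^k t^k$ and absorbing the prefactor $t^{2^s}$, the $t^k$ coefficient of each summand is $0$ for $k < 2^s$ and $r_j^{k-2^s}$ for $k\ge 2^s$; hence for $k\ge 2^s$ the $t^k$ coefficient of $\GeneratingFunctionx_{2^s}(t)$ equals $\sum_{j=1}^{2^s} r_j^{2^s-1}r_j^{k-2^s}/P_s'(r_j) = \sum_{j=1}^{2^s} r_j^{k-1}/P_s'(r_j)$.

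It then remains to substitute Lemma~\ref{P_m prime}: since $P_s'(r_j) = (-1)^{j+1}2^s/\sin\frac{(2j-1)\pi}{2^{s+1}}$ and $(-1)^{j+1}$ equals its own reciprocal, we get $1/P_s'(r_j) = 2^{-s}(-1)^{j+1}\sin\frac{(2j-1)\pi}{2^{s+1}}$, and inserting $r_j = 2+2\cos\frac{(2j-1)\pi}{2^{s+1}}$ yields exactly \eqref{GF}. I do not expect any genuine obstacle here; the only points worth a sentence of care are verifying that the hypotheses of Lemma~\ref{partial fractions} hold (distinctness of the $r_j$, already noted in the proof of Proposition~\ref{x_m formula}) and keeping track of the shift by $t^{2^s}$ in the numerator, which is precisely what makes the formula valid only in the stated range $k\ge 2^s$.
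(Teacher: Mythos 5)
Your argument is correct and is exactly the paper's own: expand $X_{2^s}(t)$ by partial fractions (Lemma~\ref{partial fractions}), read off the $t^k$ coefficient for $k\ge 2^s$, and substitute the residue formula of Lemma~\ref{P_m prime}. The paper compresses the bookkeeping into a single display defining $a_j$, but the steps are identical.
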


\begin{proof}
Together Proposition~\ref{x_m formula} and Lemmas \ref{partial fractions} and \ref{P_m prime} imply that
for $k\ge 2^s$, the $t^k$ coefficient of $\GeneratingFunctionx_{2^s}(t)$ is given by
$$\sum_{j=1}^{2^s} a_j\bigl(2+2\cos \frac{(2j-1)\pi}{2^{s+1}}\bigr)^{k-2^s},$$
where
$$a_j = \frac{(-1)^{j+1}\sin \frac{(2j-1)\pi}{2^{s+1}}\bigl(2+2\cos \frac{(2j-1)\pi}{2^{s+1}}\bigr)^{2^s-1}}{2^s}.$$
The proposition follows immediately.
\end{proof}

\section{Discrete convolutions}
The formula \eqref{product} can be understood as expressing the sequence of coefficients $x_{n,k}$ of a general $X_n(t)$ as the convolution of 
the sequences $x_{2^s,k}$ as $s$ ranges over the $s_i$.  As $4^{-k}b_{2k}^{G,V}$ is the sum over all $n$ of $4^{-k}x_{n,k}$, we would like to understand the sum over all finite sets of $s$-values
of the convolutions of the functions $A_s\colon \Z\to \R$ such that $A_s(k)=0$ for $k<0$ and  $A_s(k) = 4^{-k}x_{2^s,k}$ for $k\ge 0$.  

In this section, we analyze such sums more generally.  
We assume each $A_s$ is non-negative, supported on the natural numbers, with sum $1/2$ and with small differences between consecutive terms and finally that
each $A_s$ is concentrated at values of $k$ such that $\log_4 k$ is close to $s$.  It is not difficult to show that our particular functions $A_s$
satisfy these conditions, but we defer this to a later section and work more generally in this section.

If $A\colon \Z\to \R$ is any function and $d$ is a positive integer, we denote by $\asup aA$ (resp. $\aone aA$) the $\ell^\infty$ norm (resp. $\ell^1$ norm) of the restriction of $A$ to $[a,\infty)\cap \Z$.  If $d$ is a positive integer, we denote by $A^d(x)$ the function $A(x+d)-A(x)$.

\begin{lem}
Let $A,B\colon \Z\to [0,\infty)$ be summable and supported on $\N$.  We have
$$\Vert A\ast B\Vert_1 = \Vert A\Vert_1 \Vert B\Vert_1$$
and
$$\Vert A\ast B\Vert_\infty \le \min(\Vert A\Vert_1 \Vert B\Vert_\infty,\Vert A\Vert_\infty \Vert B\Vert_1).$$
\end{lem}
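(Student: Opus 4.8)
The plan is to unwind the definition of the discrete convolution $(A\ast B)(n) = \sum_{k\in\Z} A(k)B(n-k)$ and to exploit non-negativity so that every rearrangement is justified by Tonelli's theorem for series with non-negative terms over $\Z\times\Z$. First I would record that, since $A$ and $B$ are supported on $\N$, each defining sum is really a finite sum over $0\le k\le n$ when $n\ge 0$, and is empty (hence $0$) when $n<0$; thus $A\ast B$ is again non-negative and supported on $\N$, and no convergence issue arises in its pointwise values.

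For the $\ell^1$ identity I would write $\Vert A\ast B\Vert_1 = \sum_{n\ge 0}\sum_{k\ge 0} A(k)B(n-k)$, observe that every term is $\ge 0$, and interchange the order of summation (legitimate by Tonelli precisely because the double series is non-negative, with no extra hypothesis needed). This yields $\sum_{k\ge 0} A(k)\sum_{n\ge k} B(n-k) = \sum_{k\ge 0} A(k)\,\Vert B\Vert_1 = \Vert A\Vert_1\Vert B\Vert_1$. Summability of $A$ and $B$ ensures the right-hand side is finite, so in particular $A\ast B$ is summable.

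For the $\ell^\infty$ bound I would estimate termwise: for any fixed $n$, $(A\ast B)(n) = \sum_k A(k)B(n-k) \le \Vert B\Vert_\infty \sum_k A(k) = \Vert A\Vert_1\Vert B\Vert_\infty$. Taking the supremum over $n$ gives $\Vert A\ast B\Vert_\infty \le \Vert A\Vert_1\Vert B\Vert_\infty$; since $A\ast B = B\ast A$, the symmetric bound $\Vert A\ast B\Vert_\infty \le \Vert A\Vert_\infty\Vert B\Vert_1$ follows at once, and taking the minimum of the two completes the proof.

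I do not expect any genuine obstacle here: the entire content is the standard Young-type estimate for convolution on $\ell^1(\N)$. The only points requiring a word of care are that the non-negativity of the double series is what makes the Tonelli interchange unconditional, and that the support hypothesis (not just summability) is what lets one manipulate the pointwise convolution sums as finite sums.
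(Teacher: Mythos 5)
Your proposal is correct and follows essentially the same route as the paper: swap the order of summation (justified by non-negativity) to get the $\ell^1$ identity, then bound the convolution pointwise by $\Vert A\Vert_1\Vert B\Vert_\infty$ and invoke commutativity for the symmetric estimate. The only difference is that you make the Tonelli justification explicit, which the paper leaves implicit.
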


\begin{proof}
For the first claim,
\begin{align*}
\Vert A\ast B\Vert_1 &= \sum_{n=-\infty}^\infty A\ast B(n) = \sum_{n=-\infty}^\infty \sum_{p+q=n} A(p)B(q) \\
&= \sum_{p=-\infty}^\infty\sum_{q=-\infty}^\infty A(p)B(q)
= \Vert A\Vert_1 \Vert B\Vert_1.
\end{align*}
Moreover, for all $n$,
$$A\ast B(n) = \sum_{p=-\infty}^\infty A(p) B(n-p) \le \sum_{p=-\infty}^\infty A(p) \sup_q B(q) = \Vert A\Vert_1 \Vert B\Vert_\infty.$$
By the symmetry of convolution, this implies the second claim.
\end{proof}

\begin{lem}
\label{A split}
Let $A_1,\ldots,A_r$ be functions $\Z\to [0,\infty)$ supported on $\N$, $a_1,\ldots,a_r$ be non-negative integers, and $a$ be an integer greater than or equal to $a_1+\cdots+a_r$.
If $\Vert A_i\Vert_1 = \frac 12$ for $i=1,\ldots,r$, then
$$\asup a{A_1\ast\cdots\ast A_r} \le 2^{1-r}\sum_{i=1}^r \asup {a_i}{A_i}.$$
\end{lem}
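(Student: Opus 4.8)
The plan is to induct on $r$, peeling off one factor at a time and using the two-variable estimate from the previous lemma together with the hypothesis $\Vert A_i\Vert_1 = \tfrac12$. The base case $r=1$ is trivial: $\asup a{A_1} \le \asup{a_1}{A_1}$ because $a \ge a_1$ and the $\ell^\infty$ norm over a tail can only shrink as the tail shrinks (all terms are non-negative, though that is not even needed — a larger index set gives a larger sup). For the inductive step, write $B = A_2 \ast \cdots \ast A_r$ and $A = A_1$, so $A_1 \ast \cdots \ast A_r = A \ast B$.

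The key point is a tail version of the $\ell^\infty$ bound for convolutions: I would show that for non-negative $A, B$ supported on $\N$ and any decomposition $a = a' + a''$ with $a', a''$ non-negative integers,
$$\asup{a}{A\ast B} \le \asup{a'}{A}\,\Vert B\Vert_1 + \Vert A\Vert_1\,\asup{a''}{B}.$$
To see this, fix $n \ge a$ and split $A\ast B(n) = \sum_p A(p)B(n-p)$ according to whether $p \ge a'$ or $p < a'$. On the range $p \ge a'$ we bound $A(p) \le \asup{a'}{A}$ and sum $B$ freely, getting $\asup{a'}{A}\,\Vert B\Vert_1$. On the range $p < a'$ we have $n - p > n - a' \ge a - a' = a''$, so $B(n-p) \le \asup{a''}{B}$, and summing $A$ freely gives $\Vert A\Vert_1\,\asup{a''}{B}$. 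Adding the two pieces yields the claimed inequality, uniformly in $n \ge a$, hence for the sup.

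Now apply this with $A = A_1$, $B = A_2\ast\cdots\ast A_r$, $a' = a_1$, and $a'' = a - a_1 \ge a_2 + \cdots + a_r$. The first lemma gives $\Vert B\Vert_1 = (\tfrac12)^{r-1}$ and $\Vert A_1\Vert_1 = \tfrac12$, so
$$\asup{a}{A_1\ast\cdots\ast A_r} \le 2^{1-r}\asup{a_1}{A_1} + \tfrac12\,\asup{a - a_1}{A_2\ast\cdots\ast A_r}.$$
By the inductive hypothesis applied to $A_2,\ldots,A_r$ with the integer $a - a_1 \ge a_2 + \cdots + a_r$, the second term is at most $\tfrac12 \cdot 2^{2-r}\sum_{i=2}^r \asup{a_i}{A_i} = 2^{1-r}\sum_{i=2}^r \asup{a_i}{A_i}$. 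Combining, $\asup{a}{A_1\ast\cdots\ast A_r} \le 2^{1-r}\sum_{i=1}^r \asup{a_i}{A_i}$, completing the induction.

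I do not expect a serious obstacle here; the only thing to be careful about is bookkeeping in the index split (ensuring $n - p > a''$ rather than $\ge$, which is where the strict inequality $p < a'$ is used, and checking the degenerate cases $a' = 0$ or $a'' = 0$ where one of the two ranges is empty — then the corresponding term is vacuously $0$ and the bound still holds). One should also note that the statement is symmetric enough that it does not matter which factor is peeled off, so no optimization over the split is needed beyond the natural choice $a' = a_1$.
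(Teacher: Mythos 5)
Your proof is correct, but it takes a different route from the paper's proof of this particular lemma. The paper gives a direct, non-inductive argument: for any $x_0\ge a\ge a_1+\cdots+a_r$ and any decomposition $x_0=x_1+\cdots+x_r$, at least one $i$ satisfies $x_i\ge a_i$; the paper then bounds, for each fixed $i$, the total contribution of all decompositions with $x_i\ge a_i$ by $\asup{a_i}{A_i}\prod_{j\neq i}\Vert A_j\Vert_1 = 2^{1-r}\asup{a_i}{A_i}$ and sums over $i$, over-counting harmlessly. You instead prove a two-factor tail estimate $\asup{a'+a''}{A\ast B}\le \asup{a'}{A}\Vert B\Vert_1 + \Vert A\Vert_1\asup{a''}{B}$ and induct, peeling off $A_1$ with the split $a'=a_1$, $a''=a-a_1$. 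Interestingly, your method is precisely what the paper uses for the \emph{continuous} analogue, Lemma~\ref{Conv a infty}, where the base case $r=2$ is exactly your two-factor estimate (there labelled \eqref{L51}) and the general case is "by induction." So you have, in effect, proved the discrete lemma by the paper's continuous-version method. Both arguments are short and correct; the paper's direct pigeonhole argument is slightly more transparent about why the factor $2^{1-r}$ appears uniformly, while your inductive version is more modular and transfers verbatim between $\Z$ and $\R$, which is arguably cleaner given that the paper needs both. Your bookkeeping on integer boundary cases ($p<a'$ versus $p\le a'-1$, and the degenerate $a'=0$ or $a''=0$) is correct, though slightly more care than needed since only $n-p\ge a''$ is required.
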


\begin{proof}
If $x_0\ge a \ge a_1+\cdots+a_r$, then in any representation of $x_0$ as a sum $x_1+\cdots+x_r$ of integers, the condition $x_i\ge a_i$ must be satisfied for at least one value of $i$.
It suffices to prove that 
$$\sum_{x_i = a_i}^\infty \sum_{x_1+\cdots+x_{i-1}+x_{i+1}+\cdots+x_r = x_0-x_i} A_1(x_1)\cdots A_r(x_r) \le 2^{1-r}\asup {a_i}{A_i}.$$
Since $x_i\ge a_i$, this sum is bounded above by
\begin{equation}
\label{A split part}
\asup {a_i}{A_i}\sum_{x_1,\cdots,x_{i-1},x_{i+1},\cdots,x_r} A_1(x_1)\cdots A_{i-1}(x_{i-1})A_{i+1}(x_{i+1})\cdots A_r(x_r),
\end{equation}
whose $i$th summand is
$$\asup {a_i}{A_i}\Vert A_1\Vert_1\cdots \Vert A_{i-1}\Vert_1\Vert A_{i+1}\Vert_1\cdots \Vert A_r\Vert_1 = 2^{1-r} \asup {a_i}{A_i}.$$
\end{proof}

The following lemma gives explicit form to the principle that the convolution of a rapidly decaying sequence 
with a slowly varying sequence is well approximated by the termwise product of the second sequence with the sum of the first.

\begin{lem}
\label{summing by parts}
If $A,B\colon \Z\to [0,\infty)$ are supported on $\N$ and $a$ and $b$ are non-negative integers, then
%
$$\asup {a+b}{A\ast B-\Vert A\Vert_1 B} \le \Vert A\Vert_1 \sup_{0\le d\le a}   \asup b{B^d} +  2\aone aA \Vert B\Vert_\infty.$$
\end{lem}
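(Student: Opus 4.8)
The plan is to prove the estimate by a summation-by-parts argument, writing the difference $A\ast B - \Vert A\Vert_1 B$ as a single sum and controlling it in two regimes according to how far the convolution variable is from the cutoff.

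First I would write, for $x_0 \ge a+b$,
$$
(A\ast B)(x_0) - \Vert A\Vert_1 B(x_0) = \sum_{d=0}^{\infty} A(d)\bigl(B(x_0-d) - B(x_0)\bigr),
$$
using that $A$ is supported on $\N$ and that $\sum_d A(d) = \Vert A\Vert_1$. I would then split the sum at $d = a$: the ``near'' part $0 \le d \le a$ and the ``far'' part $d > a$. For the far part, I bound $|B(x_0-d) - B(x_0)| \le 2\Vert B\Vert_\infty$ and use $\sum_{d>a} A(d) = \aone aA$ (note the inequality $x_0 - d$ may be negative, but then $B(x_0-d)=0$, which is still bounded by $\Vert B\Vert_\infty$), giving the second term $2\aone aA\,\Vert B\Vert_\infty$. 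For the near part, I rewrite the telescoping difference $B(x_0-d) - B(x_0) = -\sum_{e=0}^{d-1} B^1(x_0 - e - 1)$ or, more directly matching the $B^d$ notation in the statement, I would observe that $B(x_0) - B(x_0 - d) = B^d(x_0 - d) = $ (value of $B^d$ at the point $x_0-d$), and that since $0 \le d \le a$ and $x_0 \ge a+b$, we have $x_0 - d \ge b$, so each such value is bounded by $\asup b{B^d} \le \sup_{0\le d\le a}\asup b{B^d}$. Summing against $A(d)$ over $0 \le d \le a$ contributes at most $\Vert A\Vert_1 \sup_{0\le d\le a}\asup b{B^d}$.

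The one subtlety — and the step I expect to require the most care — is the treatment of the boundary: when $d \le a$ but $x_0 - d < x_0$ could, in principle, dip below $b$ or even below $0$, I must make sure the bookkeeping with $B^d$ stays honest. The inequality $x_0 \ge a+b$ combined with $d \le a$ gives $x_0 - d \ge b \ge 0$, so $B^d$ is being evaluated at a point in $[b,\infty)$, which is exactly what $\asup b{B^d}$ controls; this is where the hypothesis that $a$ and $b$ are non-negative and the cutoff is at $a+b$ is used in an essential way. I should also double-check the edge case $d=0$, where $B^0(x) = B(x+0) - B(x) = 0$, contributing nothing, consistent with the bound.

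Combining the two regimes and taking the supremum over $x_0 \ge a+b$ yields
$$
\asup{a+b}{A\ast B - \Vert A\Vert_1 B} \le \Vert A\Vert_1 \sup_{0\le d\le a}\asup b{B^d} + 2\aone aA\,\Vert B\Vert_\infty,
$$
which is the claimed inequality. The whole argument is elementary once the telescoping identity is set up; the only thing to be careful about is keeping the indices of $B^d$ inside the range where $\asup b{\cdot}$ gives information.
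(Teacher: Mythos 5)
Your proof is correct and follows essentially the same route as the paper: write $(A\ast B)(x_0) - \Vert A\Vert_1 B(x_0) = \sum_{d\ge 0} A(d)\bigl(B(x_0-d)-B(x_0)\bigr)$, split at $d=a$, bound the near part by $\sup_{0\le d\le a}\asup b{B^d}$ after noting $x_0-d\ge b$, and bound the tail by $2\Vert B\Vert_\infty \aone aA$. The only cosmetic quibble is that $\sum_{d>a}A(d)\le\aone aA$ rather than $=$, but the inequality goes the right way, so nothing is lost.
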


\begin{proof}
If $x_0 \ge a+b$, then
\begin{align*}
A\ast B(x_0) &= \sum_{i=0}^a  A(i) (-B(x_0) + B(x_0-i)) + B(x_0)\sum_{i=0}^a A(i) \\
&\qquad\qquad\qquad\qquad+ \sum_{i=a+1}^\infty A(i) B(x_0-i) \\
&= -\sum_{i=0}^a  A(i) B^i(x_0-i) + B(x_0)(\Vert A\Vert_1 - \sum_{i=a+1}^\infty A(i)) \\
&\qquad\qquad\qquad\qquad+ \sum_{i=a+1}^\infty A(i) B(x_0-i).\\
&= -\sum_{i=0}^a  A(i) B^i(x_0-i) + \Vert A\Vert_1B(x_0) \\
&\qquad\qquad\qquad\qquad+  \sum_{i=a+1}^\infty A(i)(B(x_0-i)-B(x_0)).
\end{align*}
We have $\sum_{i=0}^a  A(i) \le \Vert A\Vert_1$, so
the lemma follows.
\end{proof}

Henceforth, we suppose $A_0,A_1,\ldots$ are functions $\Z\to [0,\infty)$ which are supported on $\N$.
For each finite set $S\subset \N$, we denote by $A_S$ the discrete convolution of $A_s$ over all $s\in S$.
For $T$ a subset of $\R$, we define $B_T$ as the sum of $A_S$ over all non-empty finite subsets $S$ of $T\cap \N$.

We make the following assumptions.
\begin{enumerate}
\item[(I)]For all $s$, $\sum_k A_s(k) = 1/2$.
\item[(II)]For all $r\ge 0$ there exists $C_r$ such that $A_s(k) < C_r 4^{-s} (4^s/k)^r$ for all $k>0$.
\item[(III)]There exists $C$ such that for all $s,k_1,k_2$, 
$$|A_s(k_1)-A_s(k_2)| < C(|k_1-k_2| 16^{-s}+8^{-s}).$$
\end{enumerate}

\begin{lem}
\label{estimates}
For $d,m,s\ge 0$ and $S\subset\N$ a finite, non-empty set, the above assumptions imply:
\begin{enumerate}
\item $\Vert A_s\Vert_\infty = O(4^{-s})$.
\item $\asup {2^m}{A_s} = O(2^{6s-4m})$.
\item $\aone{2^m}{A_s} = O(2^{6s-3m})$.
\item $\Vert A_S\Vert_\infty= O(2^{-|S|}2^{-2\max S}).$
\item $\asup {2^m}{A_S} = O(2^{-|S|}2^{6\max S - 4m}).$
\item $\Vert A^d_S\Vert_\infty = O(2^{-|S|}(2^{-3\max S}+2^{-4\max S} d)).$
\end{enumerate}
\end{lem}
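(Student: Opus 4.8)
The plan is to establish the six estimates in turn, deriving (1)--(3) directly from the pointwise hypotheses (II) and (III) and then deducing (4)--(6) by feeding these into the convolution inequalities of this section, namely the first lemma of the section together with Lemma~\ref{A split}. For (1), hypothesis (II) with $r=0$ gives $A_s(k)=O(4^{-s})$ for $k\ge 1$, and (III) with $k_1=0$, $k_2=1$ then bounds $A_s(0)$ to the same order, since $8^{-s},16^{-s}\le 4^{-s}$. For (2) and (3), hypothesis (II) with $r=4$ gives $A_s(k)<C_4\,2^{6s}k^{-4}$ for $k\ge 1$; the supremum of the right side over $k\ge 2^m$ is $O(2^{6s-4m})$, which is (2), and $\sum_{k\ge 2^m}k^{-4}=O(2^{-3m})$ gives (3).

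For (4) and (6), write $\sigma=\max S$ and factor $A_S=A_\sigma\ast A_{S\setminus\{\sigma\}}$. By the first lemma of this section and hypothesis (I) one has $\Vert A_{S\setminus\{\sigma\}}\Vert_1=2^{1-|S|}$ (with the empty product read as $1$ when $S=\{\sigma\}$). For (4), the same lemma gives $\Vert A_S\Vert_\infty\le 2^{1-|S|}\Vert A_\sigma\Vert_\infty$, which is $O(2^{-|S|}2^{-2\max S})$ by (1). For (6), the identity $(A\ast B)^d=A\ast B^d$ gives $A_S^d=A_{S\setminus\{\sigma\}}\ast A_\sigma^d$, so (after taking absolute values, as $A_\sigma^d$ is signed) $\Vert A_S^d\Vert_\infty\le 2^{1-|S|}\Vert A_\sigma^d\Vert_\infty$; hypothesis (III) bounds $\Vert A_\sigma^d\Vert_\infty=\sup_x|A_\sigma(x+d)-A_\sigma(x)|$ by $C(d\,16^{-\sigma}+8^{-\sigma})=O(2^{-3\max S}+2^{-4\max S}d)$, which is (6).

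The remaining estimate (5) is the one that needs an idea, and I would prove it with Lemma~\ref{A split}. List $S=\{s_1<\cdots<s_r\}$, so $s_i\le\max S-(r-i)$. Assume first $m\ge r$ and put $a_{s_i}=2^{m-r+i-1}$, so that $\sum_i a_{s_i}=2^{m-r}(2^r-1)<2^m$; since each $\Vert A_{s_i}\Vert_1=\tfrac12$, Lemma~\ref{A split} yields $\asup{2^m}{A_S}\le 2^{1-|S|}\sum_{i=1}^r\asup{2^{m-r+i-1}}{A_{s_i}}$. By (2) the $i$-th term is $O(2^{6s_i+4(r-i)+4-4m})$, and $6s_i+4(r-i)\le 6\max S-2(r-i)$, so the sum is $O(2^{6\max S-4m})$ times $\sum_{j\ge 0}2^{-2j}<\tfrac43$, which is (5) for $m\ge r$. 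When $m<r$ one has $m\le\max S$, so $\asup{2^m}{A_S}\le\Vert A_S\Vert_\infty=O(2^{-|S|}2^{-2\max S})$ by (4), and $2^{-2\max S}\le 2^{6\max S-4m}$, so (5) holds there too. An alternative is induction on $|S|$, peeling off $A_{\max S}$ and using that in any splitting $x_0=p+q$ with $x_0\ge 2^m$ at least one of $p,q$ is $\ge 2^{m-1}$; bounding the two cases by the first lemma, (2), and the inductive hypothesis gives a recursion on the constant that remains bounded.

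I expect (5) to be the main obstacle. Its content is that the mass $2^m$ must be allotted among the $|S|$ convolution factors so that no single factor's tail contributes more than $O(2^{6\max S-4m})$, with a constant independent of $|S|$; this is possible precisely because the gaps $\max S-s$ over $s\in S$ grow at least linearly, so the factor-by-factor contributions form a geometric series with ratio bounded away from $1$. The other five items are routine bookkeeping with the hypotheses and the convolution lemmas.
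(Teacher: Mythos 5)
Your proof is correct and follows essentially the same approach as the paper: (1)--(3) directly from assumptions (II) and (III), and (4)--(6) by peeling off $A_{\max S}$ and feeding (1)--(3) into the convolution norm lemma and Lemma~\ref{A split} (with the $\max S$ factor assigned the largest cutoff $a_i$). You are somewhat more careful than the paper in a few minor spots --- bounding $A_s(0)$ via (III) since (II) only covers $k>0$, summing $\sum_{k\ge 2^m}k^{-4}$ directly in (3) rather than dyadically, and handling $m<|S|$ separately in (5) --- but these are refinements of the same argument, not a different route.
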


\begin{proof}
Parts (1) and (2) follow from assumption (II) in the $r=0$ and $r=4$ cases respectively.

Part (3) follows from assumption (I) if $m\le 2s$.  Otherwise, (2) implies that for $r\ge 0$,
$$\sum_{k=2^{m+r}}^{2^{m+r+1}-1}A_s(k) = O(2^{6s-3m-3r}),$$
and summing over $r$, we get  (3).

For the remaining parts, let $S = \{s_1,\ldots,s_r\}$ with $s_1 > s_2 > \cdots > s_r$.
By (1), $\Vert A_{s_1}\Vert_\infty = O(4^{-s_1})$.  Thus,
$$\Vert A_S\Vert_\infty \le \Vert A_{s_1}\Vert_\infty \Vert A_{s_2}\Vert_1 \cdots \Vert A_{s_r}\Vert_1 = 2^{1-r} \Vert A_{s_1}\Vert_\infty,$$
implying (4).

On the other hand,
$$\sum_{i=1}^r 2^{m-i} < 2^m,$$
so by Lemma~\ref{A split}, (2), and the fact that $s_i \le 1+s_1-i$, we have
$$\asup {2^m}{A_S} \le 2^{1-r} \sum_{i=1}^r \asup {2^{m-i}}{A_{s_i}} = O(2^{-|S|} 2^{6 s_1 -4m}),$$
implying (5).

For (6),
$$A_S^d = A_{s_1}^d\ast A_{s_2} \ast\cdots \ast A_{s_r}.$$
By assumption (III),
$$\Vert A_{s_1}^d\Vert_\infty = O(2^{-3s_1}+ 2^{-4s_1} d).$$
Therefore,
$$\Vert A_S^d\Vert_\infty \le O(2^{1-r}(8^{-s_1}+ 16^{-s_1} d)) = O(2^{-|S|}(2^{-3\max S}+2^{-4\max S} d)).$$

\end{proof}

\begin{lem}
\label{blurring}
If $s\in\N$, $S$ is a finite subset of $(s,\infty)\cap \Z$, $n\ge s$, and $k\in [4^n,4^{n+1})$, then
$$A_{\{s\}\cup S}(k) -\frac 12A_S(k) = O(2^{-|S|}2^{-(\max(\max S+n,2n)+\frac35(n-s))}).$$

\end{lem}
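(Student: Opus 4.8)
The plan is to apply Lemma~\ref{summing by parts} with $A = A_s$ and $B = A_S$, exploiting the fact that $\Vert A_s\Vert_1 = 1/2$ by assumption (I), so that $A_{\{s\}\cup S} - \frac12 A_S = A_s\ast A_S - \Vert A_s\Vert_1 A_S$. First I would choose a cutoff parameter $a$ for the "head" of $A_s$ and a parameter $b$ for where we evaluate $A_S$. Since we evaluate at $k\in[4^n,4^{n+1})$, and $\min S > s$ so $\max S \ge s+1$, the mass of $A_S$ lives near $4^{\max S}$; the point is that $k$ is at least $4^n \ge 4^s$, comfortably in the regime where the bounds of Lemma~\ref{estimates} apply. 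I would take $a = 4^{s + \theta(n-s)}$ for a suitable fraction $\theta\in(0,1)$ (to be optimized; I expect $\theta$ around $\frac25$, so that $a$-dependent terms balance), and $b$ something like $\frac12\cdot 4^n$, so that $a + b \le k$ whenever $n$ is large relative to $s$ — and the small-$n$ (i.e. $n$ close to $s$) cases can be absorbed into the $O(\cdot)$ since then the claimed bound is weak.

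The three error terms from Lemma~\ref{summing by parts} are: $\Vert A_s\Vert_1 \sup_{0\le d\le a}\asup{b}{A_S^d}$, and $2\aone{a}{A_s}\Vert A_S\Vert_\infty$. For the first, I would combine part (6) of Lemma~\ref{estimates} (which controls $\Vert A_S^d\Vert_\infty$ by $O(2^{-|S|}(2^{-3\max S} + 2^{-4\max S}d))$) with part (5) or a tail-sum argument; taking $d\le a = 4^{s+\theta(n-s)}$ gives a contribution $O(2^{-|S|}(2^{-3\max S} + 2^{-4\max S}4^{s+\theta(n-s)}))$ times the length of the relevant interval near $b$ — but since we only need the $\ell^\infty$ bound at a single point $k$, not an $\ell^1$ bound, this is just $O(2^{-|S|}2^{-3\max S}+\cdots)$, which I would need to check is dominated by the target $2^{-|S|}2^{-(\max(\max S+n,2n)+\frac35(n-s))}$. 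For the second term, $\aone{a}{A_s} = \aone{4^{s+\theta(n-s)}}{A_s}$; writing $a = 2^{2s + 2\theta(n-s)} = 2^{2(s + m')}$ with $m' = \theta(n-s)$, part (3) of Lemma~\ref{estimates} gives $\aone{2^{2m}}{A_s}$-type bounds — more precisely the tail $\aone{a}{A_s} = O(2^{6s}a^{-3/2}) = O(2^{6s - 3(s+\theta(n-s))}) = O(2^{3s - 3\theta(n-s)})$, and $\Vert A_S\Vert_\infty = O(2^{-|S|}2^{-2\max S})$ by part (4). Multiplying: $O(2^{-|S|}2^{-2\max S}2^{3s - 3\theta(n-s)})$. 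Here the factor $2^{3s}$ is the dangerous one; it must be killed by $2^{-3\theta(n-s)}$ against the $2^{-\frac35(n-s)}$ and the gap between $2\max S$ and $\max(\max S + n, 2n)$. This is where I would tune $\theta$.

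The main obstacle, I expect, is the bookkeeping of exponents to verify that with a single choice of $\theta$ all three error terms are simultaneously $O(2^{-|S|}2^{-(\max(\max S + n, 2n) + \frac35(n-s))})$, handling separately the two cases $\max S + n \ge 2n$ (i.e. $\max S \ge n$) and $\max S < n$. In the first case the target denominator is essentially $2^{\max S + n + \frac35(n-s)}$ and we have a genuine extra factor $2^{\max S - n} \ge 1$ of room beyond the naive $2^{2n}$; in the second case $\max S$ could be as small as $s+1$ while $n$ is large, and then the $2^{3s}\approx 2^{3\max S}$ growth from the $\aone{a}{A_s}$ estimate is most threatening and forces $\theta$ to be large enough (but $\theta < 1$ is needed to keep $a \le k$). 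I would also double-check the edge case $n = s$, where $S\subset(s,\infty)$ forces $\max S > s = n$ so we are in the first case and the estimate degenerates gracefully. Once $\theta$ is fixed, each estimate is a routine substitution into Lemma~\ref{estimates} and Lemma~\ref{summing by parts}, and the $O$-constants are uniform because those lemmas' constants are.
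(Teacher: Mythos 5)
Your high-level plan is the same as the paper's: apply Lemma~\ref{summing by parts} with $A = A_s$, $B = A_S$, with cutoffs $a \approx 4^{s + \theta(n-s)}$ and $b \approx \tfrac12\cdot 4^n$, then invoke parts (3)--(6) of Lemma~\ref{estimates}. The paper's choice $a = 2^{n+s}$ is exactly your parametrization with $\theta = \tfrac12$, and $b = 2^{2n-1}$.

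There is one genuine gap. You propose to split into just the two cases $\max S \ge n$ and $\max S < n$, using part (6) in the first and (essentially) part (5) in the second to control $\asup{b}{A^d_S}$. This does not close. When $\max S$ is only slightly below $n$, part (5) gives $\asup{2^{2n-1}}{A_S^d} \le 2\asup{2^{2n-1}}{A_S} = O(2^{-|S|} 2^{6\max S - 8n})$, which as $\max S \to n^-$ is $O(2^{-|S|} 2^{-2n})$ and so misses the target $2^{-|S|}2^{-2n - \frac35(n-s)}$ by a factor $2^{\frac35(n-s)}$, unbounded as $n - s \to\infty$. In that regime you must instead still fall back to part (6), which does work because $3\max S$ and $4\max S - (n+s)$ are still close to $3n$ and $3n - s$; conversely part (6) fails outright once $\max S$ drops too far below $n$ (the $2^{-3\max S}$ term is too big). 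The paper therefore uses a three-way split with the intermediate threshold $n - \frac1{10}(n-s)$, and the constants $\tfrac1{10}$ and $\tfrac35$ are jointly tuned so that the two regimes meet with essentially no slack (one of the key inequalities holds with equality). Without discovering that intermediate threshold, the argument cannot be completed.

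Two smaller points. First, your heuristic guess $\theta \approx \tfrac25$ is too small: in the regime $\max S = s+1$ with $n - s$ large, the term $\aone a{A_s}\,\Vert A_S\Vert_\infty$ comes out to $O\bigl(2^{-|S|}2^{-2\max S - 6\theta(n-s)}\bigr)$, and requiring this to be $O\bigl(2^{-|S|}2^{-2n - \frac35(n-s)}\bigr)$ forces $6\theta \ge 2 + \tfrac35$ up to lower-order terms, i.e.\ $\theta \ge \tfrac{13}{30}$; the paper's $\theta = \tfrac12$ works, $\theta = \tfrac25$ does not. Second, your line ``$\aone a{A_s} = O(2^{6s} a^{-3/2})$'' is an arithmetic slip: part (3) with $a = 2^m$ gives $O(2^{6s - 3m}) = O(2^{6s} a^{-3})$, not $a^{-3/2}$. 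Since the correct exponent is \emph{more} favorable, this slip alone would not break the argument, but it does mean your intermediate exponent bookkeeping as written is not reliable and would need to be redone from part (3) directly.
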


\begin{proof}
We apply Lemma~\ref{summing by parts} with $A= A_s$, $B= A_S$, and $a=2^{n+s}$ and $b=2^{2n-1}$ to obtain
\begin{equation}
\label{L42}
A_{\{s\}\cup S}(k) -\frac 12A_S(k) = O(\sup_{d\le 2^{n+s}} \asup{2^{2n-1}}{A_S^d}+\aone {2^{n+s}}{A_s}\Vert A_S\Vert_\infty).
\end{equation}
Suppose $\max S \ge n$.  Then
$$3\max S \ge \max S + n + \frac 35(n-s)$$
and
$$4\max S - n - s  \ge \max S + n + \frac 35(n-s).$$
Using $\asup{2^{n+s}}{A_S^d}\le \Vert A_S^d\Vert_\infty$
and applying part (6) of Lemma~\ref{estimates},
$$\asup{2^{n+s}}{A_S^d} = O(2^{-|S|}2^{-(\max(\max S+n,2n)+\frac35(n-s))}).$$
As
$$2\max S + 3(n-s)\ge \max S + n + \frac 35(n-s),$$
by parts (3) and (4) of Lemma~\ref{estimates},
$$\aone {2^{n+s}}{A_s}\Vert A_S\Vert_\infty= O(2^{-|S|}2^{-(\max(\max S+n,2n)+\frac35(n-s))}),$$
so the lemma follows from \eqref{L42}.

Likewise, if $n-\frac{n-s}{10}\le \max S \le n$, then we have
$$3\max S,\,4\max S - n - s,\,2\max S + 3(n-s)\ge 2n + \frac 35(n-s),$$
and the lemma follows as before.

If, on the other hand, $\max S\le n-\frac{n-s}{10}$, then since 
$$\asup{2^{2n-1}}{A_S^d}\le 2\asup{2^{2n-1}}{A_S},$$
by part (5) of Lemma~\ref{estimates}, we have
$$\asup{2^{2n-1}}{A_S^d} = O(2^{-|S|}2^{-\frac35(n-s)}2^{-2n}).$$
As $\max S\ge s$,
$$2\max S + 3n-3s \ge 2n+\frac 35(n-s),$$
so by parts (3) and (4) of Lemma~\ref{estimates},
$$\aone {2^{n+s}}{A_s}\Vert A_S\Vert_\infty = O(2^{-|S|}2^{-\frac35(n-s)}2^{-2n}),$$
and again the lemma follows from \eqref{L42}.

%
%
%
%
\end{proof}

\begin{prop}
\label{Limit of B}
For all $\epsilon > 0$, there exists $r>0$ such that if $n\ge r$ is an integer and $k\in [4^n,4^{n+1})$, then
$$\Bigm|\Bigl(\frac 32\bigr)^{-n}B_{\R}(k) - \bigl(\frac 32\bigr)^{-r}B_{[n-r,n+r]}(k) \Bigm|< \epsilon 4^{-n}.$$
\end{prop}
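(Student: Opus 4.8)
The plan is to group the index sets $S$ in $B_{\R}(k)=\sum_{\emptyset\neq S}A_S(k)$ according to how the finite set $S\subset\N$ meets the three ranges $[0,n-r)$, $[n-r,n+r]$ and $(n+r,\infty)$. Write $S_-=S\cap[0,n-r)$, $S_0=S\cap[n-r,n+r]$, $S_+=S\cap(n+r,\infty)$, so that $A_S=A_{S_-}\ast A_{S_0}\ast A_{S_+}$ (omitting empty convolution factors), and let $\Sigma_1$ be the part of the sum over $S$ with $S_0=S_+=\emptyset$, let $\Sigma_3$ be the part over $S$ with $S_+\neq\emptyset$, and let $\Sigma_2$ be the remainder, i.e.\ the part over $S$ with $S_+=\emptyset\neq S_0$. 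Since $(3/2)^{-n}(3/8)^n=4^{-n}$, the assertion is equivalent to $\bigl|B_{\R}(k)-(3/2)^{n-r}B_{[n-r,n+r]}(k)\bigr|<\epsilon\,(3/8)^n$, so it suffices to show that $\Sigma_1$, $\Sigma_3$, and $\Sigma_2-(3/2)^{n-r}B_{[n-r,n+r]}(k)$ are each $O\bigl((3/8)^n\rho^r\bigr)$ for some absolute $\rho<1$, uniformly in $n\ge r$ and $k\in[4^n,4^{n+1})$; one then picks $r$ so large that the three implied constants times $\rho^r$ each fall below $\epsilon/3$.

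The two tail contributions are straightforward. In $\Sigma_1$ every $S$ has $\max S\le n-r-1$, so part~(5) of Lemma~\ref{estimates} (with $2^m=4^n$) and $k\ge 4^n$ give $A_S(k)\le\asup{2^{2n}}{A_S}=O(2^{-|S|}2^{6\max S-8n})=O(2^{-|S|}2^{-2n-6r})$; since $\sum_{\emptyset\neq S\subset[0,n-r)}2^{-|S|}\le(3/2)^{n-r}$, this yields $\Sigma_1=O((3/8)^n(2/3)^r2^{-6r})$. In $\Sigma_3$ we have $\max S>n+r$, so part~(4) of Lemma~\ref{estimates} gives $A_S(k)\le\Vert A_S\Vert_\infty=O(2^{-|S|}4^{-\max S})$; summing the three blocks independently — the block in $(n+r,\infty)$ contributing $\sum_{s>n+r}(3/8)^s(3/2)^{-n-r}=O(4^{-n-r})$ — gives $\Sigma_3=O((3/2)^{n-r}(3/2)^{2r}4^{-n-r})=O((3/8)^{n+r})$. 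Both are of the required shape.

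The substance is $\Sigma_2$, where $S=S_-\sqcup S_0$ with $S_-\subset[0,n-r)$ and $\emptyset\neq S_0\subset[n-r,n+r]$, and $A_S=A_{S_-}\ast A_{S_0}$. Listing $S_-=\{s_1>\cdots>s_q\}$, I would adjoin the $s_j$ to $S_0$ one at a time in decreasing order, applying Lemma~\ref{blurring} at each step with "$s$"$\,=s_j$; this is legitimate because $s_j\le n-r-1<n$, $k\in[4^n,4^{n+1})$, and the set accumulated so far (never empty, as $S_0\neq\emptyset$) lies in $(s_j,\infty)\cap\Z$. Telescoping the $q$ steps yields
\[
A_{S_-\cup S_0}(k)=2^{-|S_-|}A_{S_0}(k)+\sum_{j=1}^{q}2^{-(q-j)}e_j,\qquad |e_j|=O\bigl(2^{-(j-1)-|S_0|}\,2^{-2n-\frac35(n-s_j)}\bigr),
\]
using $\max(\max S_0+n,2n)\ge 2n$ in the bound of Lemma~\ref{blurring}. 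Summing the main term $2^{-|S_-|}A_{S_0}(k)$ over $S_-\subset[0,n-r)$ produces the factor $\sum_{S_-}2^{-|S_-|}=(3/2)^{n-r}$, and then summing over $\emptyset\neq S_0\subset[n-r,n+r]$ gives exactly $(3/2)^{n-r}B_{[n-r,n+r]}(k)$, so the whole problem reduces to bounding the error $\sum_{S_0}\sum_{S_-}\sum_{j}2^{-(q-j)}e_j$.

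This error estimate is the main obstacle. The naive bound — replace each $e_j$ by its maximum and multiply by the $2^{n-r}$ choices of $S_-$ — overshoots by $(4/3)^{n-r}$ and is useless; one must instead keep the telescoping factor $2^{-(q-j)}2^{-(j-1)}=2^{-(q-1)}$ and combine it with the count $\binom{n-r-1}{q-1}$ of size-$q$ sets $S_-$ containing a fixed index, so that $\sum_q 2^{-(q-1)}\binom{n-r-1}{q-1}=O((3/2)^{n-r})$ replaces the lethal $2^{n-r}$; simultaneously $\sum_{s_j\in S_-}2^{-\frac35(n-s_j)}\le\sum_{t>r}2^{-\frac35 t}=O(2^{-\frac35 r})$ since every $s_j<n-r$. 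Carrying this out gives, for fixed $S_0$, an error $O\bigl(2^{-|S_0|}2^{-2n}(3/2)^{n-r}2^{-\frac35 r}\bigr)$, and $\sum_{\emptyset\neq S_0\subset[n-r,n+r]}2^{-|S_0|}\le(3/2)^{2r+1}$ then gives the total error $O\bigl((3/2)^{n+r}2^{-2n}2^{-\frac35 r}\bigr)=O\bigl((3/8)^n(3\cdot 2^{-8/5})^r\bigr)$. The inequality $3\cdot 2^{-8/5}<1$, i.e.\ $2^8>3^5$, is precisely what drives the error to $0$ with $r$ — a narrow margin, which is no doubt why the exponent $\tfrac35$ was built into Lemma~\ref{blurring}. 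Taking $\rho=3\cdot 2^{-8/5}$, the three estimates combine to bound $\bigl|B_{\R}(k)-(3/2)^{n-r}B_{[n-r,n+r]}(k)\bigr|$ by an absolute constant times $(3/8)^n\rho^r$, and choosing $r$ as above and dividing by $(3/2)^n$ completes the proof.
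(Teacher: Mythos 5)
Your decomposition and overall strategy are essentially those of the paper: the large-index tail (your $\Sigma_3$, i.e.\ $\max S>n+r$) is controlled by part~(4) of Lemma~\ref{estimates} exactly as in the paper's estimate \eqref{too big}, and the contribution of indices below $n-r$ is "blurred away" by iterating Lemma~\ref{blurring}, which is what the paper's induction on $q$ does. Two points of difference are worth flagging. First, you isolate and bound $\Sigma_1$, the contribution of sets $S$ lying \emph{entirely} in $[0,n-r)$, via part~(5) of Lemma~\ref{estimates}; the paper's written proof only treats sets whose intersection with $[n-r,n+r]$ is nonempty together with the $\max S\ge n+r$ tail, so your $\Sigma_1$ step fills in a small case that the paper glosses over. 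Second, in $\Sigma_2$ you always bound $\max(\max S_0+n,2n)$ from below by $2n$ in Lemma~\ref{blurring}, while the paper splits into $\max S_0\le n$ and $\max S_0>n$ and exploits the sharper exponent $\max S_0+n$ in the latter case; the resulting extra factor $2^{n-\max S_0}$ exactly offsets the growth of $\sum_{S_0}2^{-|S_0|}$ when $\max S_0$ runs up to $n+r$, so the paper's error term is $O(4^{-n}2^{-3r/5})$ whereas yours is $O(4^{-n}(3\cdot2^{-8/5})^r)$. Your observation that $2^8>3^5$ makes this suffice, and your reasoning is correct — the paper's bound is merely tighter, while your three-way split is cleaner bookkeeping.
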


\begin{proof}
By part (4) of Lemma~\ref{estimates}, for $s\ge n$ and $S\subset [0,s)$,
$$A_{\{s\}\cup S}(k)= O(2^{-|S|} 4^{-s}),$$
so
\begin{align*}
\sum_{s=n+r}^\infty \sum_{S\subset [0,s)}A_{\{s\}\cup S}(k) &= O\Bigl(\sum_{s=n+r}^\infty  4^{-s}
\sum_{S\subset [0,s)} 2^{-|S|}\Bigr) \\
&= O\Bigl(\sum_{s=n+r}^\infty  4^{-s}
\bigl(\frac32\bigr)^s\Bigr) 
= O\bigl(\bigl(\frac38\bigr)^{n+r}\bigr).
\end{align*}
%
It follows that
\begin{equation}
\label{too big}
(\frac32\bigr)^{-n}(B_{\R}(k) - B_{[0,n+r]}(k)) = O(4^{-n}(\frac38\bigr)^r\bigr).
\end{equation}
By Lemma~\ref{blurring} and induction on $q$, if $S\subset [n-r,n+r]$ and 
$$n-r>s_1'>\cdots>s'_q,$$
then
$$A_{S\cup \{s'_1,\ldots,s'_q\}}(k) - 2^{-q}A_S(k) =
\begin{cases}
O(2^{-|S|-q}2^{-\frac 35 r}4^{-n})&\text{if $\max S\le n$},\\
O(2^{-|S|-q}2^{-\frac 35 r}2^{\max S - n}4^{-n})&\text{if $\max S>n$}
\end{cases}$$
If $s\in [n-r,n]$ and $S\subset [n-r,s)$, then
$$\sum_{S'\subset [0,n-r)}\bigl(A_{\{s\}\cup S\cup S'}(k)-2^{-|S'|}A_{\{s\}\cup S}(k)\bigr) = O\bigl(2^{-|S|}\bigl(\frac 32\bigr)^{n-r}2^{-\frac35 r}4^{-n}\bigr),$$
so
$$\sum_{S\subset [n-r,s)}\sum_{S'\subset [0,n-r)}\bigl(A_{\{s\}\cup S\cup S'}(k)-2^{-|S'|}A_{\{s\}\cup S}(k)\bigr)
= O\bigl(\bigl(\frac 32\bigr)^s2^{-\frac35 r}4^{-n}\bigr).$$
Therefore,
\begin{equation}
\label{first half}
\begin{split}
\bigl(\frac 32\bigr)^{-n} &\sum_{s=n-r}^n \sum_{S\subset [n-r,s)}\sum_{S'\subset [0,n-r)}\bigl(A_{\{s\}\cup S\cup S'}(k)-2^{-|S'|}A_{\{s\}\cup S}(k)\bigr)\\
&=O(2^{-\frac35 r}4^{-n}).
\end{split}
\end{equation}
If $s\in (n,n+r]$ and $S\subset [n-r,s)$, then
$$\sum_{S'\subset [0,n-r)}\bigl(A_{\{s\}\cup S\cup S'}(k)-2^{-|S'|}A_{\{s\}\cup S}(k)\bigr) = O\bigl(2^{-|S|}\bigl(\frac 32\bigr)^{n-r}2^{-\frac35 r}2^{n-s}4^{-n}\bigr),$$
so
$$\sum_{S\subset [n-r,s)}\sum_{S'\subset [0,n-r)}\bigl(A_{\{s\}\cup S\cup S'}(k)-2^{-|S'|}A_{\{s\}\cup S}(k)\bigr)
= O\bigl(\bigl(\frac 32\bigr)^s2^{-\frac35 r}2^{n-s}4^{-n}\bigr).$$
Therefore,
\begin{equation}
\label{second half}
\begin{split}
\bigl(\frac 32\bigr)^{-n} &\sum_{s=n-r}^n \sum_{S\subset [n-r,s)}\sum_{S'\subset [0,n-r)}\bigl(A_{\{s\}\cup S\cup S'}(k)-2^{-|S'|}A_{\{s\}\cup S}(k)\bigr)\\
&=O(2^{-\frac35 r}4^{-n}).
\end{split}
\end{equation}

By \eqref{too big}, \eqref{first half}, and \eqref{second half}, the sum of all terms in $(3/2)^{-n} B_{\R}(k)$ which do not occur in $(3/2)^{-n} B_{[n-r,n+r]}(k)$
can be taken to be an arbitrarily small multiple of $4^{-n}$ by making $r$ sufficiently large.
\end{proof}

\section{Real convolutions}
Let $\chi$ denote the unique primitive (mod $4$) Dirichlet character, and let
\begin{equation}
\label{Dirichlet}
\varphi(x) = \begin{cases}
\frac\pi8\sum_{n=1}^\infty \chi(n) n e^{-\frac{\pi^2 n^2}{16} x}&\text{ if $x>0$,}\\
0&\text{ if $x\le 0$.}\\
\end{cases}
\end{equation}
As \eqref{GF} suggests, the sequence $x_{2^s,k}$ determines a step function which,
after suitable rescaling, converges as $s\to\infty$ to $\varphi$.  See Proposition~\ref{A vs phi} below for the precise statement and proof.
We prove in this section a continuous analogue of Proposition~\ref{Limit of B}
which enables us to define the function $\psi(x)$ of Theorem~\ref{Main} as a limit of sums of convolutions.
There are significant differences between Proposition~\ref{Limit of B} and Proposition~\ref{Convolve}, however.
For one thing, because we want to prove the limit function is essentially multiplicatively periodic, the index set for
the convolutions must be $\Z$ rather than $\N$.  For another, since we want to prove the limit is smooth, we must bound derivatives of all orders. 
Nevertheless, the proofs are formally very similar.

We begin with some basic facts about convolutions of Schwartz functions over $\R$.  
The convolution of any two such functions $\sigma$ and $\tau$ is again a Schwartz function, and the derivative of $\sigma\ast \tau$
is $\sigma'\ast\tau = \sigma\ast \tau'$ (\cite[V, Proposition 1.11]{SS}).  If $\sigma$ and $\tau$ are non-negative and supported on $[0,\infty)$, the same
will be true of $\sigma\ast \tau$.   We define $\asup af$ (resp. $\aone af$) to be the $L^\infty$  norm (resp. $L^1$ norm) of the restriction of $f$ to $[a,\infty)$.

Exactly as in section 3, we have the following lemma:

\begin{lem}
Let $\sigma,\tau\colon \R\to [0,\infty)$ be Schwartz functions supported on $[0,\infty)$.  Then the Schwartz function $\sigma\ast \tau$ satisfies
$$\Vert\sigma\ast\tau\Vert_1 = \Vert \sigma\Vert_1 \Vert \tau\Vert_1$$
and
$$\Vert \sigma\ast \tau\Vert_\infty \le \min(\Vert \sigma\Vert_1 \Vert \tau\Vert_\infty,\Vert \sigma\Vert_\infty \Vert \tau\Vert_1).$$
\end{lem}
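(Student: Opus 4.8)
The plan is to mimic exactly the discrete argument for convolutions of summable sequences given immediately above, replacing sums by integrals and the $\ell^p$ norms on $[a,\infty)\cap\Z$ by the $L^p$ norms on $[a,\infty)$. First I would record that $\sigma\ast\tau$ is a Schwartz function supported on $[0,\infty)$ whenever $\sigma,\tau$ are (as recalled just before the statement), so all the integrals below converge absolutely and Fubini applies without comment.

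For the $L^1$ identity, I would write
$$\Vert\sigma\ast\tau\Vert_1 = \int_{-\infty}^\infty \Bigl|\int_{-\infty}^\infty \sigma(p)\tau(x-p)\,dp\Bigr|\,dx = \int_{-\infty}^\infty\int_{-\infty}^\infty \sigma(p)\tau(x-p)\,dp\,dx,$$
where the absolute values disappear because $\sigma,\tau\ge 0$; then Fubini's theorem and the substitution $q=x-p$ in the inner integral give $\int\sigma(p)\,dp\int\tau(q)\,dq = \Vert\sigma\Vert_1\Vert\tau\Vert_1$. For the $L^\infty$ bound, for each fixed $x$ I would estimate
$$\sigma\ast\tau(x) = \int_{-\infty}^\infty \sigma(p)\tau(x-p)\,dp \le \Vert\tau\Vert_\infty\int_{-\infty}^\infty \sigma(p)\,dp = \Vert\sigma\Vert_1\Vert\tau\Vert_\infty,$$
taking the supremum over $x$; the symmetric bound $\Vert\sigma\Vert_\infty\Vert\tau\Vert_1$ follows from commutativity of convolution, and combining them gives the stated minimum.

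The only point requiring the slightest care — and even this is routine — is justifying the interchange of the order of integration: since $\sigma,\tau$ are Schwartz and in particular $L^1$, the double integral $\iint \sigma(p)\tau(x-p)\,dp\,dx$ of the nonnegative integrand is finite (it equals $\Vert\sigma\Vert_1\Vert\tau\Vert_1$ by the Tonelli direction), so Fubini applies. There is no genuine obstacle here; the lemma is verbatim the real-variable analogue of the discrete lemma in Section 3, and its role in the paper is simply to bootstrap the forthcoming estimates on iterated convolutions of the rescaled copies of $\varphi$. I would therefore keep the proof to a few lines, flagging only that the nonnegativity hypothesis is what lets us drop the absolute value and invoke Tonelli/Fubini for the support and norm claims.
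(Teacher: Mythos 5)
Your proof is correct and is exactly the argument the paper intends: it says ``Exactly as in section 3, we have the following lemma'' and leaves the proof implicit, expecting the reader to translate the discrete computation (sums, nonnegativity dropping absolute values, Tonelli to swap the order of summation, and the trivial sup bound) into the integral setting verbatim, which is what you do. Nothing to add.
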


\begin{lem}
\label{Conv a infty}
Let $\sigma_1,\ldots,\sigma_r\colon \R\to [0,\infty)$ be Schwartz functions whose support is contained in $[0,\infty)$,
let $a_1,\ldots,a_r$ be non-negative numbers and $a\ge a_1+\cdots +a_r$.  
If $\Vert \sigma_i\Vert_1 = \frac 12$ for $i=1,2,\ldots,r$, then
$$\asup a{\sigma_1\ast\cdots\ast \sigma_r} \le 2^{1-r}\sum_{i=1}^r \asup {a_i}{\sigma_i}.$$
\end{lem}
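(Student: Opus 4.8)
The plan is to mimic, word-for-word, the proof of the discrete analogue Lemma~\ref{A split}, since the only facts used there were the behaviour of $\ell^1$ and $\ell^\infty$ norms under convolution, and the combinatorial observation that a sum cannot be small in every coordinate. All of these survive the passage from $\N$-supported summable sequences to $[0,\infty)$-supported Schwartz functions, with sums replaced by integrals. Concretely, I would argue as follows.

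First I would reduce to a pointwise statement: fix $x_0\ge a$. Since $\sigma_1\ast\cdots\ast\sigma_r$ is supported on $[0,\infty)$, its value at $x_0$ is
$$
(\sigma_1\ast\cdots\ast\sigma_r)(x_0)=\int_{\Delta(x_0)}\sigma_1(x_1)\cdots\sigma_r(x_r)\,dx_1\cdots dx_{r-1},
$$
where $\Delta(x_0)=\{(x_1,\dots,x_r)\in[0,\infty)^r : x_1+\cdots+x_r=x_0\}$. Because $x_0\ge a\ge a_1+\cdots+a_r$, on $\Delta(x_0)$ we must have $x_i\ge a_i$ for at least one index $i$ (otherwise $x_1+\cdots+x_r<a_1+\cdots+a_r\le x_0$). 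Hence $\Delta(x_0)=\bigcup_{i=1}^r\Delta_i(x_0)$ with $\Delta_i(x_0)=\Delta(x_0)\cap\{x_i\ge a_i\}$, and the integral is bounded by $\sum_{i=1}^r$ of the integral over $\Delta_i(x_0)$.

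Next I would bound the $i$th piece. On $\Delta_i(x_0)$ we have $\sigma_i(x_i)\le\asup{a_i}{\sigma_i}$, so
$$
\int_{\Delta_i(x_0)}\sigma_1\cdots\sigma_r\le \asup{a_i}{\sigma_i}\int_{[0,\infty)^{r-1}}\prod_{j\ne i}\sigma_j(x_j)\,dx_j
=\asup{a_i}{\sigma_i}\prod_{j\ne i}\Vert\sigma_j\Vert_1=2^{1-r}\asup{a_i}{\sigma_i},
$$
using $\Vert\sigma_j\Vert_1=\tfrac12$ for the $r-1$ factors $j\ne i$ and the fact that $\sigma_i\ge 0$ lets us drop the constraint $x_i=x_0-\sum_{j\ne i}x_j$ in favour of integrating the remaining variables freely over $[0,\infty)^{r-1}$. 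Summing over $i$ and taking the supremum over $x_0\ge a$ gives $\asup a{\sigma_1\ast\cdots\ast\sigma_r}\le 2^{1-r}\sum_{i=1}^r\asup{a_i}{\sigma_i}$.

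The only genuinely delicate point — and the one I would be most careful about — is the Fubini/Tonelli bookkeeping needed to justify ``integrate the remaining $r-1$ variables freely,'' i.e. that for nonnegative functions the mass of an $(r-1)$-dimensional slice-integral over $\{x_i\ge a_i\}$ is dominated by the full product of $\ell^1$ norms; since all integrands are nonnegative this is a routine application of Tonelli, and Schwartz-ness guarantees everything is finite, but it should be stated. Everything else is a verbatim transcription of the discrete argument, so the lemma follows.
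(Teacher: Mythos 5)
Your argument is correct. Worth noting, though: the paper does \emph{not} prove Lemma~\ref{Conv a infty} by transcribing the discrete argument of Lemma~\ref{A split}. Instead it proves the two-function case $r=2$ by splitting the one-dimensional convolution integral at $a_1$ (this is the displayed inequality~\eqref{L51}), and then obtains the general case by induction on $r$, applying the $r=2$ bound to $\sigma_1$ and $\tau=\sigma_2\ast\cdots\ast\sigma_r$ with thresholds $a_1$ and $a_2+\cdots+a_r$, and using $\Vert\tau\Vert_1=2^{1-r}$ together with the inductive hypothesis on $\tau$. Your direct decomposition of the $(r-1)$-dimensional simplex into the regions $\{x_i\ge a_i\}$ is a faithful continuous analogue of the paper's \emph{discrete} proof of Lemma~\ref{A split}, and it is perfectly valid here too (the Tonelli step you flag is indeed routine since everything is nonnegative and Schwartz). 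The two approaches buy essentially the same thing; the inductive route lets the paper reuse the explicit $r=2$ estimate \eqref{L51} later, whereas your one-shot decomposition keeps the continuous and discrete sections visibly parallel. Either way the lemma is proved.
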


\begin{proof}
We proceed by induction on $r$, the base case being $r=2$.  In this case, for $x_0 \ge a$, we have
$$\sigma_1\ast \sigma_2(x_0) = \int_0^{a_1}\sigma_1(x)\sigma_2(x_0-x)dx + \int_{a_1}^{x_0} \sigma_1(x)\sigma_2(x_0-x)dx.$$
Since $\sigma_2(x_0-x) \le \asup {a_2}{\sigma_2}$ in the first integral and $\sigma_1(x) \le \asup{a_1}{\sigma_1}$ in the second integral, we have
\begin{equation}
\label{L51}
\asup a{\sigma_1\ast\sigma_2} \le \Vert \sigma_1\Vert_1 \asup{a_2}{\sigma_2} + \asup{a_1}{\sigma_1}\Vert \sigma_2\Vert_1 = \frac 12 \asup{a_2}{\sigma_2}  + \frac 12 \asup{a_1}{\sigma_1}.
\end{equation}
The general case now follows by induction.
\end{proof}

\begin{lem}
\label{integration by parts} 
If $\sigma$ and $\tau$ are non-negative Schwartz functions supported on $[0,\infty)$ and $a,b\ge 0$, then
$$\asup{a+b}{\sigma\ast\tau-\Vert \sigma\Vert_1\tau}\le a\Vert \sigma\Vert_1 \asup b{\tau'}+2\aone a\sigma\Vert \tau\Vert_\infty.$$
\end{lem}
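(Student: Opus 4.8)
The plan is to imitate the proof of the discrete analogue, Lemma~\ref{summing by parts}, with summation by parts replaced by the fundamental theorem of calculus. Fix $x_0\ge a+b$ and split the convolution integral at $x=a$:
$$\sigma\ast\tau(x_0) = \int_0^a \sigma(x)\tau(x_0-x)\,dx + \int_a^\infty \sigma(x)\tau(x_0-x)\,dx.$$
In the first integral I would add and subtract $\tau(x_0)$, writing $\tau(x_0-x) = \tau(x_0) + \bigl(\tau(x_0-x)-\tau(x_0)\bigr)$, and then rewrite $\tau(x_0)\int_0^a\sigma(x)\,dx$ as $\Vert\sigma\Vert_1\tau(x_0) - \tau(x_0)\aone a\sigma$ using the definition of $\aone a\sigma$. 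This isolates the main term $\Vert\sigma\Vert_1\tau(x_0)$ and leaves three error terms: $\int_0^a\sigma(x)\bigl(\tau(x_0-x)-\tau(x_0)\bigr)\,dx$, the term $-\tau(x_0)\aone a\sigma$, and $\int_a^\infty\sigma(x)\tau(x_0-x)\,dx$.

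For the first error term I would apply the fundamental theorem of calculus in the form $\tau(x_0-x)-\tau(x_0) = -\int_{x_0-x}^{x_0}\tau'(u)\,du$. Since $0\le x\le a$ and $x_0\ge a+b$, every $u$ in the range of integration satisfies $u\ge x_0-a\ge b$, so $|\tau'(u)|\le\asup b{\tau'}$ and hence $|\tau(x_0-x)-\tau(x_0)|\le x\asup b{\tau'}\le a\asup b{\tau'}$. Multiplying by $\sigma(x)\ge 0$ and integrating over $[0,a]$ bounds this error by $a\Vert\sigma\Vert_1\asup b{\tau'}$. The remaining two error terms are each at most $\aone a\sigma\,\Vert\tau\Vert_\infty$ in absolute value: for $-\tau(x_0)\aone a\sigma$ this is immediate, and for $\int_a^\infty\sigma(x)\tau(x_0-x)\,dx$ one bounds $\tau(x_0-x)\le\Vert\tau\Vert_\infty$ and integrates $\sigma$ over $[a,\infty)$. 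Summing the three bounds gives $|\sigma\ast\tau(x_0)-\Vert\sigma\Vert_1\tau(x_0)|\le a\Vert\sigma\Vert_1\asup b{\tau'}+2\aone a\sigma\,\Vert\tau\Vert_\infty$, and taking the supremum over $x_0\ge a+b$ yields the stated inequality.

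There is no genuine obstacle here; the statement is the continuous counterpart of Lemma~\ref{summing by parts} and the argument is short. The only points needing a word of care are that $\sigma\ast\tau$, $\tau$, and $\tau'$ are Schwartz and supported on $[0,\infty)$ (so all integrals converge and the manipulations are legitimate), which is recorded in the paragraph preceding the lemma, and the bookkeeping that the two stray terms genuinely combine into $2\aone a\sigma\,\Vert\tau\Vert_\infty$ rather than something larger.
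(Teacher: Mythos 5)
Your proof is correct and is essentially the same as the paper's. The paper writes $\sigma\ast\tau(x_0)-\Vert\sigma\Vert_1\tau(x_0)=\int_0^\infty\sigma(x)\bigl(\tau(x_0-x)-\tau(x_0)\bigr)\,dx$, splits the integral at $a$, and bounds the two pieces by the mean value theorem (your fundamental-theorem-of-calculus estimate is the same thing) and by $2\aone a\sigma\Vert\tau\Vert_\infty$ respectively; your version adds and subtracts $\tau(x_0)$ only on $[0,a]$ and tracks three error terms, but the second and third of these recombine into exactly the paper's tail integral, so the two arguments coincide.
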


\begin{proof}
If $x_0 \ge a+b$, then
\begin{align*}
\sigma&\ast\tau(x_0) - \tau(x_0)\int_0^\infty \sigma(x)dx = \int_0^\infty \sigma(x) (-\tau(x_0)+\tau(x_0-x))dx \\
&=\int_0^a \sigma(x) (-\tau(x_0)+\tau(x_0-x))dx + \int_a^\infty \sigma(x) (-\tau(x_0)+\tau(x_0-x))dx\\
&\le\Vert \sigma\Vert_1  a\asup b{\tau'}+ \aone a{\sigma}(2\Vert \tau\Vert_\infty)
\end{align*}
since for $x\le a$, $|\tau(x_0-x)-\tau(x_0)| \le a\asup b{\tau'}$ by the mean value theorem.
\end{proof}

Let $\phi$ be a non-negative Schwartz function supported on $[0,\infty)$ with 
\begin{equation}
\label{phi half}\Vert \phi\Vert_1 = \frac 12.
\end{equation}
For all $s\in\Z$, we define 
$$\phi_s(x) = 4^{-s} \phi(4^{-s}x),$$
so $\Vert \phi_s\Vert_1 = \frac 12$.  If $p$ is a non-negative integer, then
\begin{equation}
\label{pth derivative}
\Vert \phi^{(p)}_s\Vert_\infty = 4^{-(p+1)s}\Vert \phi^{(p)}\Vert_\infty.
\end{equation}
For any finite subset $S\subset \Z$, we define $\phi_S$ to be the convolution of $\phi_s$ over all $s\in S$.
If $S = \{s_1,\ldots,s_r\}$, then
$$\phi_S^{(p)} = \phi^{(p)}_{s_1}\ast \phi_{\{s_2,\ldots,s_r\}}.$$

\begin{lem}
\label{phi estimates}
If $p\ge 0$, $j> 2p+2$, and $S\subset\Z$ is finite and non-empty, the above assumptions imply:
\begin{enumerate}
\item $\Vert \phi^{(p)}_s\Vert_\infty = O(2^{-(2p+2)s})$.
\item $\asup {2^m}{\phi^{(p)}_s} = O(2^{(2j-2p-2)s-jm})$.
\item $\aone{2^m}{\phi^{(p)}_s} = O(2^{(2j-2p-2)s-(j-1)m})$.
\item $\Vert \phi^{(p)}_S\Vert_\infty= O(2^{-|S|}2^{-(2p+2)\max S}).$
\item $\asup {2^m}{\phi^{(p)}_S} = O(2^{-|S|}(2^{(2j-2p-2)\max S - jm}+2^{(4j-4p-4)\max S - (2j-1)m})).$
\end{enumerate}
\end{lem}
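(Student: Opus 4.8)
\textbf{Proof plan for Lemma~\ref{phi estimates}.}
The plan is to derive all five estimates from the scaling identities \eqref{pth derivative} together with the convolution lemmas already proved in this section, mirroring the structure of the proof of Lemma~\ref{estimates}. For part (1), I would simply combine \eqref{pth derivative} with \eqref{phi half}: since $\phi$ is Schwartz, $\Vert \phi^{(p)}\Vert_\infty$ is a finite constant, so $\Vert \phi^{(p)}_s\Vert_\infty = 4^{-(p+1)s}\Vert\phi^{(p)}\Vert_\infty = O(2^{-(2p+2)s})$. For parts (2) and (3), the point is that $\phi^{(p)}$ is Schwartz, hence for the fixed exponent $j$ there is a constant with $|\phi^{(p)}(x)| \le C_j \min(1,|x|^{-j})$; rescaling by $4^{-s}$ and integrating (resp. taking the sup) over $[2^m,\infty)$ gives a geometric tail dominated by its first term. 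Concretely, $\asup{2^m}{\phi^{(p)}_s}$ is $4^{-(p+1)s}$ times the sup of $|\phi^{(p)}(4^{-s}x)|$ over $x\ge 2^m$, which is $O(4^{-(p+1)s}(4^s/2^m)^j) = O(2^{(2j-2p-2)s - jm})$, and one more power of $2^m$ is lost upon integrating for the $\ell^1$ bound in (3) — exactly the bookkeeping done for parts (2), (3) of Lemma~\ref{estimates}, only now with the variable exponent $j$ in place of fixed exponents $4$ and $3$.

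For part (4), writing $S = \{s_1 > s_2 > \cdots > s_r\}$ with $r = |S|$ and $\max S = s_1$, I would use $\phi^{(p)}_S = \phi^{(p)}_{s_1}\ast \phi_{\{s_2,\ldots,s_r\}}$ together with the elementary bound $\Vert\sigma\ast\tau\Vert_\infty \le \Vert\sigma\Vert_\infty\Vert\tau\Vert_1$ from the first lemma of this section, plus the fact that $\Vert\phi_{s_i}\Vert_1 = 1/2$ so that $\Vert\phi_{\{s_2,\ldots,s_r\}}\Vert_1 = 2^{1-r}$. This gives $\Vert\phi^{(p)}_S\Vert_\infty \le \Vert\phi^{(p)}_{s_1}\Vert_\infty \cdot 2^{1-r} = O(2^{1-r}2^{-(2p+2)s_1}) = O(2^{-|S|}2^{-(2p+2)\max S})$, which is (4).

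Part (5) is the one requiring genuine care and is where I expect the main obstacle to lie, since it is the only tail estimate for a multi-fold convolution. Here I would apply Lemma~\ref{Conv a infty} to the $r$ factors $\phi^{(p)}_{s_1}, \phi_{s_2}, \ldots, \phi_{s_r}$ with a judicious choice of cutoffs $a_i$: taking $a_1 = 2^{m-1}$ and $a_i = 2^{m-i}$ for the remaining factors, so that $\sum_i a_i < 2^m$ and the hypothesis $a \ge a_1 + \cdots + a_r$ holds with $a = 2^m$. Lemma~\ref{Conv a infty} then bounds $\asup{2^m}{\phi^{(p)}_S}$ by $2^{1-r}\sum_i \asup{a_i}{\sigma_i}$, and I would estimate each term using parts (2) and (3) above, invoking the inequality $s_i \le 1 + s_1 - i$ (valid because the $s_i$ are strictly decreasing integers) to convert every occurrence of $s_i$ into $s_1 = \max S$ at the cost of a bounded multiplicative constant. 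The subtlety is that there are two competing terms in the final bound — $2^{(2j-2p-2)\max S - jm}$ dominating when $\max S$ is small relative to $m$, and $2^{(4j-4p-4)\max S - (2j-1)m}$ dominating otherwise — arising from whether the decay of $\phi^{(p)}_{s_1}$ or the compounded decay of the smaller-scale factors controls the tail; keeping the two regimes straight, and checking that the arithmetic of the exponents $(2j-2p-2)s_1 - jm$ after summing the geometric series in $i$ indeed produces the stated pair of exponents, is the real content. This is the exact analogue of part (5) of Lemma~\ref{estimates}, so I would follow that computation closely, replacing the fixed exponents by $j$ throughout.
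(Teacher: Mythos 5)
Your treatment of parts (1)--(4) matches the paper's proof essentially verbatim: scaling for (1), the Schwartz decay $|\phi^{(p)}(x)|\le C_j|x|^{-j}$ for (2)--(3), and the factorization $\phi^{(p)}_S=\phi^{(p)}_{s_1}\ast\phi_{\{s_2,\ldots,s_r\}}$ with the elementary convolution inequality for (4).

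Part (5) is where the proposal goes wrong, and the gap is not just bookkeeping. You propose to apply Lemma~\ref{Conv a infty} directly to the $r$ factors $\phi^{(p)}_{s_1},\phi_{s_2},\ldots,\phi_{s_r}$ to get $\asup{2^m}{\phi^{(p)}_S}\le 2^{1-r}\sum_i\asup{a_i}{\sigma_i}$. But Lemma~\ref{Conv a infty} requires each $\sigma_i$ to be non-negative with $\Vert\sigma_i\Vert_1=\frac12$, and the first factor $\phi^{(p)}_{s_1}$ satisfies neither of these for $p\ge 1$: it changes sign, and $\Vert\,|\phi^{(p)}_{s_1}|\,\Vert_1 = 4^{-ps_1}\Vert\phi^{(p)}\Vert_{L^1}$, which depends on $s_1$ and is not $\frac12$. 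If you write the $2^{1-r}$ coefficient in front of the $i\ge 2$ terms anyway, you drop a factor of order $4^{-ps_1}$, and the resulting estimate $O(2^{-|S|}2^{(2j-2)\max S-jm})$ is strictly \emph{weaker} than what the lemma asserts once $\max S>0$ and $p\ge 1$; the argument then fails. The paper avoids this by first isolating the differentiated factor with the two-term inequality \eqref{L51} applied to $\sigma_1=\phi^{(p)}_{s_1}$ and $\sigma_2=\phi_{\{s_2,\ldots,s_r\}}$, and only then applying Lemma~\ref{Conv a infty} to the derivative-free convolution $\phi_{\{s_2,\ldots,s_r\}}$, where its hypotheses actually hold.

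Your route \emph{can} be repaired: re-derive Lemma~\ref{Conv a infty} with absolute values and without the normalization assumption, obtaining $\asup a{\sigma_1\ast\cdots\ast\sigma_r}\le\sum_i\asup{a_i}{|\sigma_i|}\prod_{l\ne i}\Vert\,|\sigma_l|\,\Vert_1$, and track the factor $\Vert\,|\phi^{(p)}_{s_1}|\,\Vert_1=O(4^{-ps_1})$ in the $i\ge 2$ terms. That extra $4^{-ps_1}$ converts the exponent $(2j-2)s_i$ into $(2j-2p-2)s_1$ after using $s_i\le 1+s_1-i$ (the inequality you note), and the resulting geometric sum in $i$ is dominated by its first term. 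In fact, done carefully this gives the \emph{single} term $O(2^{-|S|}2^{(2j-2p-2)\max S-jm})$, which is stronger than the two-term bound stated in the lemma; the paper's second term $2^{(4j-4p-4)\max S-(2j-1)m}$ arises from its particular decomposition via \eqref{L51} and parts (3)--(4), not from any essential obstruction. So the idea is salvageable and would even slightly improve the result, but as written you invoke a lemma whose hypotheses you do not satisfy, and your stated coefficient $2^{1-r}$ for all $r$ terms is simply incorrect.
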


\begin{proof}
As $\phi^{(p)}(x)$ is bounded, $|4^{(p+1)s} \phi^{(p)}_s(x)| = |\phi^{(p)}(4^{-s} x)| \le C$ for some constant $C$, which gives $\Vert \phi^{(p)}_s\Vert \le C2^{-(2p+2)s}$, implying part (1).

As $\phi^{(p)}(x) x^j$ is bounded, there exists $C$ such that $\phi^{(p)}(x) \le Cx^{-j}$ for all $x$, so
$$\phi^{(p)}_s(x) = 4^{-(p+1)s} \phi(4^{-s} x)\le C 4^{-(p+1)s}  4^{js}x^{-j} \le C2^{(2j-2p-2)s-jm}$$
if $x\ge 2^m$.  This gives part (2).  Furthermore,
$$\int_{2^m}^\infty \phi^{(p)}_s(x)dx \le C2^{(2j-2p-2)s}\int_{2^m}^\infty x^{-j}dx = O(2^{(2p+6)s-(j-1)m}),$$
giving part (3).

Let $S = \{s_1,\ldots,s_r\}$, $s_1>\cdots >s_r$.  Applying (1) for $s=s_1$, part (4) follows from
$$\Vert \phi^{(p)}_S\Vert_\infty \le \Vert \phi^{(p)}_{s_1}\Vert_\infty \Vert \phi_{\{s_2,\ldots,s_r\}}\Vert_1 = 2^{1-r}\Vert \phi^{(p)}_{s_1}\Vert_\infty .$$

Finally, by \eqref{L51},
$$\asup{2^m}{\phi^{(p)}_S} \le 2^{1-r}\asup{2^{m-1}}{\phi^{(p)}_{s_1}}+\aone{2^{m-1}}{\phi^{(p)}_{s_1}}\asup{2^{m-1}}{\phi_{\{s_2,\ldots,s_r\}}}.$$
By (2), the first summand on the right hand side is $O(2^{-|S|}2^{(2j-2p-2)s_1-jm+j})$.
Applying Lemma~\ref{Conv a infty} with $\sigma_i = \phi_{s_{i+1}}$ and $a_i =2^{m-1-i}$
we get that
$$\asup{2^{m-1}}{\phi_{\{s_2,\ldots,s_r\}}} = O\Bigl(2^{2-r}\sum_{i=1}^r 2^{(2j-2p-2)s_{i+1}-j(m-1-i)}\Bigr),$$
and as $(2j-2p-2)s_{i+1}-j(m-i)$ is strictly decreasing as $i$ increases, this is $O(2^{-r}2^{(2j-2p-2)s_2-jm})$.
By (3),
$$\aone{2^{m-1}}{\phi^{(p)}_{s_1}} = O(2^{(2j-2p-2)s_1-(j-1)m}).$$
Together, these estimates give (5).

\end{proof}

\begin{prop}
\label{phi blurring}
If $p$ is a non-negative integer, $s$ is a negative integer, and $S$ is a finite subset of $[s,\infty)$, then on $[1,4]$,
$$\aone 1{\phi^{(p)}_{\{s\}\cup S}(x) - \frac 12\phi^{(p)}_S(x)} = O(2^{-|S|}2^s).$$
\end{prop}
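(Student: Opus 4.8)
The plan is to mimic the proof of Lemma~\ref{blurring}, using Lemma~\ref{integration by parts} (the continuous ``summing by parts'') in place of Lemma~\ref{summing by parts}, with $\sigma = \phi_s$, $\tau = \phi_S^{(p)}$, and suitably chosen truncation parameters $a,b$. Since $s$ is a negative integer and $S\subset[s,\infty)$, the scale $4^s$ of $\phi_s$ is $\le 1$, so $\phi_s$ is sharply concentrated near $0$ relative to the window $[1,4]$ on which we measure things; this is exactly the regime in which $\phi_s\ast\tau\approx\|\phi_s\|_1\,\tau=\frac12\tau$. Concretely, on $[1,4]$ one has $x_0\ge 1$, so I would take $b$ on the order of a fixed fraction of $1$ (say $b=\frac12$) and $a$ on the order of $\frac12$ as well, with $a\ge a_1+\cdots$ not actually needed here since we apply the two-function form of Lemma~\ref{integration by parts} directly with $\sigma=\phi_s$, $\tau=\phi_S^{(p)}$ and then integrate the resulting pointwise bound over $x_0\in[1,4]$ to pass from $\asup{\cdot}{\cdot}$ to $\aone 1{\cdot}$ (the interval has length $3$, so $L^1$ and $L^\infty$ bounds on it differ by a bounded factor).

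The key steps, in order: (1) Apply Lemma~\ref{integration by parts} with the stated choices to get, for $x_0\ge a+b$,
$$\asup{a+b}{\phi_s\ast\phi_S^{(p)}-\tfrac12\phi_S^{(p)}}\le a\,\|\phi_s\|_1\,\asup b{(\phi_S^{(p)})'}+2\,\aone a{\phi_s}\,\|\phi_S^{(p)}\|_\infty,$$
noting $\|\phi_s\|_1=\frac12$ and $(\phi_S^{(p)})'=\phi_S^{(p+1)}$. (2) Bound the first term: $\asup b{\phi_S^{(p+1)}}$ is controlled by $\|\phi_S^{(p+1)}\|_\infty=O(2^{-|S|}2^{-(2p+4)\max S})$ from part (4) of Lemma~\ref{phi estimates}; since $\max S\ge s$ and $\max S$ can be large, I should instead just use whatever power of $2^{\max S}$ is needed — but the target bound is only $O(2^{-|S|}2^s)$, which is a \emph{lower} bound on the decay in $\max S$, so part (4) with any $p$ suffices as long as $2^{-(2p+4)\max S}=O(2^s)$; here I must be careful, since $\max S$ could be $\ge 0$ while $s<0$, and then $2^{-(2p+4)\max S}\le 1$ is not $O(2^s)$. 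The resolution is that the small parameter is really $2^s$ coming from $\aone a{\phi_s}$ and from the fact that $a=O(1)$ forces, via parts (2)--(3) of Lemma~\ref{phi estimates} applied at scale $4^s\le 1$, genuine smallness: for $a$ a fixed constant and $s$ very negative, $\aone a{\phi_s}$ decays rapidly in $|s|$ because $\phi_s$ is essentially supported on $[0,4^s]\subset[0,1]$. (3) So the right move is: choose $m$ with $2^m$ comparable to $a\asymp 1$, i.e. $m=O(1)$, and apply part (3) of Lemma~\ref{phi estimates} to get $\aone a{\phi_s}=O(2^{(2j-2p-2)s})$ for any admissible $j$; taking $j$ just above $2p+2$ makes the exponent $(2j-2p-2)s$ a small \emph{negative} multiple of $s$ — but $s<0$, so this is large, not small! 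I therefore want the \emph{opposite}: part (3) is designed for $s$ large positive. For $s$ negative, $\phi_s$ is concentrated \emph{near zero}, so $\aone a{\phi_s}$ for fixed $a>0$ is actually extremely small — it equals $\int_a^\infty 4^{-s}\phi(4^{-s}x)\,dx=\int_{4^{-s}a}^\infty\phi(u)\,du$, and since $4^{-s}a\to\infty$ as $s\to-\infty$ and $\phi$ is Schwartz, this is $O((4^{-s})^{-N})=O(2^{2Ns})$ for every $N$. That is the clean estimate to use, and it is vastly stronger than $O(2^s)$.

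Thus the main obstacle — and the one genuinely requiring care — is organizing the scale bookkeeping so that the two error terms in Lemma~\ref{integration by parts} are each $O(2^{-|S|}2^s)$ on $[1,4]$: the second term is handled by the Schwartz tail bound $\aone a{\phi_s}=O(2^{2Ns})$ above combined with $\|\phi_S^{(p)}\|_\infty=O(2^{-|S|}2^{-(2p+2)\max S})=O(2^{-|S|})$ (since $\max S\ge s$ means $-(2p+2)\max S\le -(2p+2)s$, which for $s<0$ is positive — so actually $\|\phi_S^{(p)}\|_\infty$ could be \emph{large} in $|s|$; one must instead only claim $\|\phi_S^{(p)}\|_\infty=O(2^{-|S|}2^{-(2p+2)\max S})$ and note that taking $N$ large in the tail bound beats any fixed polynomial blow-up $2^{-(2p+2)\max S}\le 2^{-(2p+2)s}=2^{(2p+2)|s|}$, so the product is still $O(2^{-|S|}2^{Ns})$ for $N$ as large as we like, hence $O(2^{-|S|}2^s)$). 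The first term, $a\|\phi_s\|_1\asup b{\phi_S^{(p+1)}}\le \tfrac12 a\,\|\phi_S^{(p+1)}\|_\infty=O(2^{-|S|}2^{-(2p+4)\max S})$, is handled the same way: $\max S\ge s$, and for the bound to be $O(2^{-|S|}2^s)$ we need $-(2p+4)\max S\le s$, which fails when $\max S\ge 0$ — so here I must split on whether $\max S\le 0$ or $\max S>0$. When $\max S\le 0$ the estimate is immediate. When $\max S>0$, the offending factor is absorbed because the \emph{first} term's $\asup b{\phi_S^{(p+1)}}$ can instead be bounded using part (5) of Lemma~\ref{phi estimates} with $2^m\asymp b\asymp 1$, yielding $O(2^{-|S|}2^{(2j-2p-4)\max S})$, and — the crucial point — one then re-examines which term dominates and pushes all the $\max S$ dependence into the $2^{-|S|}$ factor using that $S\subset[s,\infty)$ is finite so $\max S$ is bounded in terms of the (fixed but arbitrary) data; strictly, the cleanest formulation is that the $O(\cdot)$ is allowed to depend on nothing but $p$ and $\phi$, and one verifies by the above case analysis that $\phi^{(p)}_{\{s\}\cup S}(x)-\frac12\phi^{(p)}_S(x)=O(2^{-|S|}2^s)$ uniformly on $[1,4]$, whence integrating over the length-$3$ interval gives $\aone 1{\phi^{(p)}_{\{s\}\cup S}(x)-\frac12\phi^{(p)}_S(x)}=O(2^{-|S|}2^s)$ as claimed.
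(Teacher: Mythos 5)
Your overall plan—apply Lemma~\ref{integration by parts} with $\sigma=\phi_s$, $\tau=\phi^{(p)}_S$, then transfer the $L^\infty$ bound on $[1,4]$ to an $L^1$ bound—is exactly the paper's approach, and your treatment of the second error term $2\aone a{\phi_s}\,\|\phi_S^{(p)}\|_\infty$ via Schwartz decay of $\phi$ at infinity is sound. But there is a genuine gap in your handling of the first error term, and it stems from your choice $a\asymp\tfrac12$.

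With $a$ a fixed constant, the first error term $a\,\|\phi_s\|_1\,\asup b{\phi^{(p+1)}_S}$ contains no factor of $2^s$ at all: it is bounded below independently of $s$ once $S$ is fixed. For instance, take $S=\{0\}$ and let $s\to-\infty$; then $a\,\|\phi_s\|_1\,\asup b{\phi^{(p+1)}_{\{0\}}}$ is a positive constant, while the target $2^{-|S|}2^s$ tends to $0$. Your claim ``when $\max S\le 0$ the estimate is immediate'' is therefore false, and the proposed fix for $\max S>0$ (``pushes all the $\max S$ dependence into the $2^{-|S|}$ factor'') is not a valid argument, since the implied constant must be uniform in both $S$ and $s$. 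The paper resolves this by taking $a=2^s$, not $a\asymp 1$. Then the first error term acquires the factor $a=2^s$ directly, and $\asup{1/2}{\phi^{(p+1)}_S}$ only needs to be $O(2^{-|S|})$ (up to manageable powers of $2^{\max S}$, handled by the same $\max S\ge 0$ versus $\max S<0$ split). The second error term is then $2\aone{2^s}{\phi_s}\|\phi_S^{(p)}\|_\infty$; since $\aone{2^s}{\phi_s}=\int_{2^{-s}}^\infty\phi(u)\,du=O(2^{Ns})$ for every $N$ (because $2^{-s}=2^{|s|}\to\infty$), the polynomial-in-$2^{|s|}$ blowup of $\|\phi_S^{(p)}\|_\infty$ is still beaten, just as in your analysis.

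A minor but symptomatic point: when you invoke part (3) of Lemma~\ref{phi estimates} you conclude that the resulting bound is ``large, not small,'' and so fall back to a direct Schwartz argument. That is a sign error—for $m=O(1)$ and any admissible $j$, the exponent $(2j-2p-2)s-(j-1)m$ is negative when $s$ is very negative, so the lemma's bound is small. The lemma does what you want; you did not need to reject it.
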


\begin{proof}
We apply Lemma~\ref{integration by parts} with $\sigma = \phi_s$, $\tau = \phi^{(p)}_S$, $a = 2^s$ and $b=\frac 12$ to obtain
\begin{equation}
\label{L52}
\aone1{\phi_{\{s\}\cup S}(x) - \frac 12\phi_S(x)} = O\bigl(2^s \asup{\frac12}{\phi^{(p+1)}_S} + \aone{2^s}{\phi_s}\Vert \phi_S^{(p)}\Vert_\infty\bigr).
\end{equation}
Applying part (3) of Lemma~\ref{phi estimates} with $j=4p+4$, we obtain
$$\aone{2^s}{\phi_s} = O(2^{(2p+3)s}),$$
so part (4) implies
$$\aone{2^s}{\phi_s}\Vert \phi_S^{(p)}\Vert_\infty = O(2^{(2p+3)s}2^{-|S|}2^{-(2p+2)\max S}).$$
If $\max S\ge 0$, we again use (4) to bound $\asup{\frac 12}{\phi^{(p+1)}_S}\le \Vert \phi^{(p+1)}_S\Vert_\infty$,
and \eqref{L52} gives 
\begin{align*}
\aone1{\phi_{\{s\}\cup S}(x) - \frac 12\phi_S(x)} &= O(2^{-|S|}(2^s 2^{-(2p+4)\max S}+2^{(2p+3)s}2^{-(2p+2)\max S})) \\
&= O(2^{-|S|}2^s 2^{-(2p+2)\max S}),
\end{align*}
implying the proposition.
If $\max S\le 0$, we apply (5) with $j=2p+3$
to bound $\asup{\frac 12}{\phi^{(p+1)}_S}$, so using the fact that $\max S\ge s$, \eqref{L52} implies
$$O(2^{-|S|}2^s2^{5 \max S}+2^{(2p+3)s}2^{-|S|}2^{-(2p+2)\max S}) = O(2^{-|S|}2^s),$$
and the proposition again follows.
\end{proof}

For every subset $T\subset \R$, we define $\psi_T$ to be the sum of $\phi_S$ over all non-empty finite subsets $S\subset T\cap \Z$.
When $T\cap \Z$ is finite, $\psi_T+\delta_0$ is the convolution of the distributions $\psi_s+\delta_0$, $t\in T\cap \Z$, where $\delta_0$ is the delta function
concentrated at $0$.
For $r$ a non-negative integer, we define 
\begin{equation}
\label{psi r}
\psi_r = (3/2)^{-r} \psi_{[-r,r]}.
\end{equation}

\begin{prop}
\label{Convolve}
Let $\phi\colon\R\to \R$ be a non-negative Schwartz function supported on $[0,\infty)$ which satisfies \eqref{phi half}.
There exists a unique smooth function $\psi\colon (0,\infty)\to [0,\infty)$ such that 
the sequence $\psi_1,\psi_2,\ldots$ converges uniformly to $\psi$ on every compact subset of $(0,\infty)$.
Moreover,
$$\psi(4x) =  \frac 38 \psi(x).$$
\end{prop}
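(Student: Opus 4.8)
\textbf{Proof proposal for Proposition~\ref{Convolve}.}

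The plan is to establish convergence of the sequence $\psi_1,\psi_2,\ldots$ on each compact subset of $(0,\infty)$, together with convergence of all derivatives, by showing that the successive differences $\psi_{r+1}-\psi_r$ are uniformly small, and then to read off the functional equation $\psi(4x)=\frac38\psi(x)$ as a limit of an approximate self-similarity already visible at finite level. The key analytic input is Proposition~\ref{phi blurring}, which controls exactly the effect on $\phi_S^{(p)}$ of adjoining one more (very negative) index $s$ to $S$: it costs a factor $\frac12$ up to an error $O(2^{-|S|}2^s)$ in the $L^1$ norm on $[1,4]$. Note that by dyadic rescaling it suffices to prove uniform convergence of the $\psi_r$ and their derivatives on the single interval $[1,4]$; convergence on $[4^m,4^{m+1}]$ for any $m\in\Z$ follows by applying the same estimates with all indices shifted by $m$, which is why we took the index set to be $\Z$.

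First I would compare $\psi_r=(3/2)^{-r}\psi_{[-r,r]}$ with $(3/2)^{-r}\psi_{[-r,r+1]}$ and with $(3/2)^{-(r+1)}\psi_{[-r-1,r+1]}$ in two steps. Passing from $[-r,r]$ to $[-r-1,r]$: every finite nonempty $S\subset[-r-1,r]$ either lies in $[-r,r]$ or is of the form $\{-r-1\}\cup S'$ with $S'\subset[-r,r]$; for the latter, Proposition~\ref{phi blurring} with $s=-r-1$ gives $\|\phi_{\{-r-1\}\cup S'}^{(p)}-\frac12\phi_{S'}^{(p)}\|_{L^1[1,4]}=O(2^{-|S'|}2^{-r-1})$ (and the empty-$S'$ term contributes $\phi_{-r-1}$ itself, which is $O(2^{-(2p+2)(-r-1)})$-small on $[1,4]$ by part (2) of Lemma~\ref{phi estimates} with $j$ large, since $-r-1\to-\infty$). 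Summing over $S'$ costs a factor $\sum_{S'\subset[-r,r]}2^{-|S'|}=(3/2)^{2r+1}$, and on $[1,4]$ the convolution with $\phi_{-r-1}$ introduces at most an $O(1)$ change of variable, so the total discrepancy between $\psi_{[-r-1,r]}^{(p)}$ and $(3/2)\psi_{[-r,r]}^{(p)}$ in $L^1[1,4]$ is $O((3/2)^{2r}2^{-r})\cdot(3/2)$, i.e. exponentially small relative to $(3/2)^r$. Then passing from $[-r-1,r]$ to $[-r-1,r+1]$ costs only the extra sets containing $r+1$ (or $r+1$ and $-r-1$), and part (4) of Lemma~\ref{phi estimates} bounds $\|\phi_S^{(p)}\|_\infty=O(2^{-|S|}2^{-(2p+2)(r+1)})$, so after summing over the $O((3/2)^{2r})$ relevant sets this contributes $O((3/2)^{2r}2^{-(2p+2)(r+1)})$, again negligible against $(3/2)^{r+1}$. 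Combining, $\|\psi_{r+1}^{(p)}-\psi_r^{(p)}\|_{L^1[1,4]}$ (and hence, upgrading via the next derivative or via a Sobolev embedding on $[1,4]$, the $L^\infty$ norm) decays geometrically in $r$ for every $p\ge 0$. Therefore $(\psi_r)$ is Cauchy in $C^\infty[1,4]$, defining a smooth limit $\psi$ there, and by the rescaling remark a smooth function on all of $(0,\infty)$; uniqueness is immediate since any two limits agree on every $[4^m,4^{m+1}]$.

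For the functional equation, I would exploit that $\phi_{s+1}(x)=4^{-(s+1)}\phi(4^{-(s+1)}x)$ is exactly the pullback of $\phi_s$ under $x\mapsto x/4$, scaled so that the $L^1$ normalization persists; concretely, if $D_4$ denotes the dilation $(D_4 f)(x)=f(x/4)$, then $\phi_{S+1}=4^{-1}\,?$ — more precisely $\psi_{[-r,r]}(4x)$ relates to $\psi_{[-r-1,r-1]}(x)$ by reindexing $S\mapsto S-1$, since $\phi_S(4x)$ equals (up to the Jacobian bookkeeping built into the definition $\phi_s(x)=4^{-s}\phi(4^{-s}x)$) the value $\phi_{S-1}(x)$ after absorbing one factor of $4$. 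Writing this out, $\psi_{[-r,r]}(4x)=4^{-1}\psi_{[-r-1,r-1]}(x)$ as distributions is not quite right because of the additive $\delta_0$; the clean statement is that the set of shifted indices $[-r,r]$ maps to $[-r-1,r-1]$, so $\psi_{[-r,r]}(4x)$ and $\psi_{[-r-1,r-1]}(x)$ differ only by the boundary sets (those containing $r$ on one side, $-r-1$ on the other), which by the same Lemma~\ref{phi estimates} estimates as above are $o((3/2)^r)$ on $[1,4]$. Hence $(3/2)^{-r}\psi_{[-r,r]}(4x)=(3/2)^{-r}\psi_{[-r-1,r-1]}(x)+o(1)=\frac32\cdot(3/2)^{-(r+1)}\psi_{[-r-1,r-1]}(x)+o(1)$, and since $\psi_{[-r-1,r-1]}$ differs from $\psi_{[-r-1,r+1]}=(3/2)^{r+1}\psi_{r+1}$ only by boundary sets indexed near $r$, again negligible, we get $\psi_r(4x)=\frac32\psi_{r+1}(x)+o(1)$; hmm — this yields $\psi(4x)=\frac32\psi(x)$, off by the wrong power. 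The resolution is that $\phi_s(x)=4^{-s}\phi(4^{-s}x)$ has $\|\phi_s\|_1=\frac12$ for all $s$ but $\phi_s(4x)=4^{-s}\phi(4^{1-s}x)=\phi_{s-1}(4x)\cdot$(no), one must instead track that dilating the \emph{variable} by $4$ multiplies a density by $\frac14$: $\int\phi_S(4x)\,dx=\frac14\|\phi_S\|_1$, so the correct identity is $\psi_{[-r,r]}(4x)=\frac14$-weighted reindex, giving the extra factor $\frac14\cdot\frac32=\frac38$. I expect the bookkeeping of this Jacobian against the reindexing $S\mapsto S-1$, reconciled with the definition of $\psi_r$ and with the boundary-term estimates, to be the one genuinely delicate point; everything else is a routine application of Lemma~\ref{phi estimates} and Proposition~\ref{phi blurring}.
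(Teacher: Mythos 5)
Your proposal follows essentially the same strategy as the paper: reduce to the interval $[1,4]$ by dyadic rescaling, compare $\psi_{[-r,r]}$ with $\psi_{[-r-1,r]}$ using Proposition~\ref{phi blurring}, compare with $\psi_{[-r-1,r+1]}$ using part (4) of Lemma~\ref{phi estimates}, and observe that these differences decay geometrically in $r$ for each derivative $p$. The one place you overcomplicate matters is the functional equation: it needs no error-term analysis whatsoever. One has $\phi_s(4x)=\tfrac14\phi_{s-1}(x)$, and since convolution scales compatibly, $\phi_S(4x)=\tfrac14\phi_{S-1}(x)$ for every nonempty finite $S$. Because $S\mapsto S-1$ is a bijection from nonempty subsets of $[-r_1,r_2]\cap\Z$ onto nonempty subsets of $[-r_1-1,r_2-1]\cap\Z$, the identity $\psi_{[-r_1,r_2]}(4x)=\tfrac14\psi_{[-r_1-1,r_2-1]}(x)$ holds \emph{exactly} --- there is no $\delta_0$ to worry about, since $\psi_T$ sums over nonempty $S$ only. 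Your back-and-forth about ``boundary sets'' and about whether the factor is $\tfrac32$ or $\tfrac38$ is thus unnecessary: $\tfrac38=\tfrac14\cdot\tfrac32$ falls out directly from this exact rescaling combined with reindexing the normalization $(3/2)^{-r_1}$ to $(3/2)^{-(r_1+1)}$. One further slip: when handling the $S'=\emptyset$ term you claim $\phi^{(p)}_{-r-1}$ is ``$O(2^{-(2p+2)(-r-1)})$-small'' on $[1,4]$, but $2^{-(2p+2)(-r-1)}=2^{(2p+2)(r+1)}$ is exponentially \emph{large}; what part (2) of Lemma~\ref{phi estimates} actually gives with $m=0$ and $j$ large is $O(2^{(2j-2p-2)(-r-1)})$, which is the bound you want. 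Your explicit invocation of a Sobolev-type upgrade from $L^1[1,4]$ bounds on successive derivatives to $L^\infty$ convergence is a point where you are more careful than the paper, which leaves this step implicit.
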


\begin{proof}
It suffices to prove the existence of $\psi$.  In fact, we prove slightly more, namely for each fixed $p\in\N$,
$(3/2)^{-r_1}\psi^{(p)}_{[-r_1,r_2]}$ converges uniformly on compact subsets of $(0,\infty)$ 
for any sequence of pairs $(r_1,r_2)$ for which both $r_1$ and $r_2$ 
go to $\infty$.
As 
$$\psi_{[-r_1-1,r_2-1]}(x) = 4\psi_{[-r_1,r_2]}(4x),$$
it suffices to prove convergence on the interval $[1,4]$.  Moreover,
\begin{equation}
\label{periodic}
\begin{split}
\psi(4x) &= \lim_{r_1,r_2\to\infty}\bigl(\frac 32\bigr)^{-r_1} \psi_{[-r_1,r_2]}(4x) 
= \lim_{r_1,r_2\to\infty}\frac{\bigl(\frac 32\bigr)\bigl(\frac 32\bigr)^{-r_1-1} \psi_{[-r_1-1,r_2-1]}(x)}{4}\\
&= \frac 38 \lim_{r_1,r_2\to\infty}\bigl(\frac 32\bigr)^{-r_1-1} \psi_{[-r_1-1,r_2-1]}(x) = \frac 38 \psi(x).
\end{split}
\end{equation}

If $p\in\N$ and $x\in [1,4]$, by part (4) of Lemma~\ref{phi estimates}, we have
\begin{align*}
\bigl(\frac 32\bigr)^{-r_1} \psi^{(p)}_{[-r_1,r_2+1]}(x) &- \bigl(\frac 32\bigr)^{-r_1}  \psi^{(p)}_{[-r_1,r_2]} (x)
= \bigl(\frac 32\bigr)^{-r_1} \sum_{S\subset [-r_1,r_2]}\phi_{\{r_2+1\}\cup S}^{(p)}(x) \\
&=O\bigl(2^{-(2p+2)(r_2+1)}\bigl(\frac 32\bigr)^{-r_1}\sum_{S\subset [-r_1,r_2]}2^{-|S|}\bigr)\\
& = O\bigl(2^{-2(r_2+1)}\bigl(\frac 32\bigr)^{-r_1} \bigl(\frac 32\bigr)^{r_1+r_2+1}\bigr)\\
& = O\bigl(\bigl(\frac 83\bigr)^{-r_2}\bigr).
\end{align*}
By Proposition~\ref{phi blurring},
\begin{align*}
\bigl(\frac 32\bigr)^{-r_1-1} \psi^{(p)}_{[-r_1-1,r_2]}(x) &- \bigl(\frac 32\bigr)^{-r_1}  \psi^{(p)}_{[-r_1,r_2]} (x)\\
&=\bigl(\frac 32\bigr)^{-r_1-1}\sum_{S\subset [-r_1,r_2]} \Bigl(\phi^{(p)}_{\{-r_1-1\}\cup S}(x) - \frac 12 \phi^{(p)}_S(x)\Bigr)\\
&= O\bigl(\bigl(\frac 32\bigr)^{-r_1-1} 2^{-r_1-1}\sum_{S\subset [-r_1,r_2]} 2^{-|S|}  \bigr) \\
&=O\bigl(\bigl(\frac 32\bigr)^{r_2} 2^{-r_1-1}\bigr).
\end{align*}

Applying these together, we conclude that $|\psi^{(p)}_{r+1}(x)-\psi^{(p)}_r(x)|$  is bounded above on $[1,4]$ by
an exponentially decaying function of $r$, and the sequence converges uniformly.
\end{proof}

%

\begin{prop}
The function $\varphi(x)$ in \eqref{Dirichlet} satisfies the hypotheses of Proposition~\ref{Convolve}.
\end{prop}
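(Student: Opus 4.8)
The plan is to verify in turn that $\varphi$ (i) is supported on $[0,\infty)$; (ii) extends to a $C^\infty$ function on all of $\R$ whose derivatives of every order decay faster than any polynomial, so that $\varphi$ is Schwartz; (iii) is non-negative; and (iv) satisfies $\Vert\varphi\Vert_1=\tfrac12$, which is exactly \eqref{phi half}. Property (i) is built into the definition \eqref{Dirichlet}. On $(0,\infty)$ the series in \eqref{Dirichlet} and each of its term-by-term derivatives converge absolutely and locally uniformly, since the factor $e^{-\pi^2n^2x/16}$ absorbs any power of $n$; so $\varphi\in C^\infty(0,\infty)$, and for $x\ge1$ one has $\varphi^{(p)}(x)=O_p(e^{-\pi^2x/16})$, which gives the required decay of all derivatives at $+\infty$ (and trivially at $-\infty$). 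Everything else — smoothness across the origin, non-negativity, and the exact mass — I would extract from the functional equation of the theta series of $\chi$.

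Set $\theta(x)=\sum_{n\ge1}\chi(n)\,n\,e^{-\pi n^2x/4}$, so that $\varphi(x)=\tfrac\pi8\,\theta(\pi x/4)$ for $x>0$. A standard Poisson-summation computation applied to $y\mapsto y e^{-\pi y^2x/4}$ along the residue classes modulo $4$, using the Gauss sum $\tau(\chi)=2i$ — equivalently, the functional equation of $L(s,\chi)$, whose root number is $+1$ since $\chi$ is real and odd — yields
\begin{equation}
\label{FE}
\theta(x)=x^{-3/2}\,\theta(1/x),\qquad x>0.
\end{equation}
Since $\theta(1/x)=\sum_{n\ge1}\chi(n)\,n\,e^{-\pi n^2/(4x)}$, a term-by-term bound gives $\theta^{(k)}(1/x)=O_k(e^{-\pi/(8x)})$ as $x\to0^+$ for every $k$; differentiating \eqref{FE} — each derivative of $x^{-3/2}\theta(1/x)$ being a polynomial in $1/x$ times a derivative of $\theta$ evaluated at $1/x$ — then shows $\theta$ and all its derivatives tend to $0$ as $x\to0^+$. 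By the standard criterion for a $C^\infty$ extension across an endpoint, $\theta$, and hence $\varphi$, extends to a $C^\infty$ function on $[0,\infty)$ that vanishes to infinite order at $0$; gluing with the zero function on $(-\infty,0]$ gives (ii), so $\varphi$ is Schwartz. The same estimate shows $\varphi(x)=O(x^{-3/2}e^{-c/x})$ near $0$ for some $c>0$, hence $\varphi\in L^1(0,\infty)$.

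For non-negativity I would use \eqref{FE} to reduce every value to a large-argument regime. Grouping the series for $\theta(x)$ in the consecutive pairs indexed by $4m-3$ and $4m-1$, which carry opposite signs, and comparing exponents shows each pair is non-negative once $e^{2\pi x}\ge 3$, so $\theta(x)>0$ for $x\ge\tfrac{\ln3}{2\pi}$; and if $0<x<\tfrac{\ln3}{2\pi}$ then $1/x>\tfrac{2\pi}{\ln3}>\tfrac{\ln3}{2\pi}$, so $\theta(x)=x^{-3/2}\theta(1/x)>0$ by the previous case. Thus $\varphi>0$ on $(0,\infty)$. For the mass: since $\varphi\ge0$ and $\varphi\in L^1$, $\int_0^\infty\varphi=\lim_{\epsilon\to0^+}\int_\epsilon^\infty\varphi$, and for fixed $\epsilon>0$ term-by-term integration gives $\int_\epsilon^\infty\varphi=\tfrac2\pi\sum_{n\ge1}\tfrac{\chi(n)}{n}e^{-\pi^2n^2\epsilon/16}$; by Abel's theorem this tends to $\tfrac2\pi\,L(1,\chi)=\tfrac2\pi\cdot\tfrac\pi4=\tfrac12$, using Leibniz's formula $L(1,\chi)=1-\tfrac13+\tfrac15-\cdots=\tfrac\pi4$.

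The one genuinely non-formal ingredient is \eqref{FE} together with the deduction of infinite-order flatness at the origin: that is precisely what upgrades ``smooth on $(0,\infty)$ with rapid decay at $+\infty$'' to ``Schwartz on $\R$'' (and, along the way, makes $\varphi$ integrable near $0$). Once \eqref{FE} is in hand, both the positivity argument and the mass computation are short, as is the verification of smoothness and decay away from the origin.
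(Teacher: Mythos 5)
Your proof is correct and takes essentially the same route as the paper: it uses the theta functional equation to get infinite-order flatness at the origin (hence Schwartz on all of $\R$), reduces positivity to the large-argument regime via the same functional equation and then pairs consecutive terms, and computes the mass via $L(1,\chi)=\tfrac\pi4$. The only cosmetic difference is that you sketch the derivation of \eqref{FE} by Poisson summation, whereas the paper simply cites it (de la Vall\'ee Poussin, via Montgomery--Vaughan).
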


\begin{proof}
The $k$th derivative of $\varphi(x)$ for $x>0$ is 
$$\frac\pi8\sum_{n=1}^\infty \chi(n)(-\frac{\pi^2n^2}{16})^k n e^{-\frac{\pi^2n^2}{16} x},$$
which is asymptotic to $\frac{(-1)^k\pi^{2k+1} e^{-\frac{\pi^2}{16} x}}{2^{4k+3}}$ at $+\infty$.  Therefore, to prove that $\varphi$ is a Schwartz function, it suffices to show that it has a $k$th derivative at $0$ for all $k$, i.e., that
$$\lim_{x\to 0^+} \sum_{n=1}^\infty \chi(n)n^{2k+1} e^{-\frac{\pi^2n^2}{16} x} = 0.$$

By a theorem of de la Vall\'ee Poussin \cite[Theorem 10.6]{MV}, we have
\begin{equation}
\label{FE}
\varphi(x) = 8(\pi x)^{-3/2}\varphi\Bigl(\frac {16}{\pi^2x}\Bigr)
\end{equation}
for $x>0$.  
Repeatedly differentiating this identity, we can express $\varphi^{(n)}(x)$ as
a linear combination of terms of the form $x^{-j}\varphi^{(k)}(16/\pi^2 x)$, where $j\in\{k+3/2,k+5/2,\ldots,2k+3/2\}$
and $k\in \{0,1,\ldots,n\}$.  Each such term has exponential decay at $x=0$ since $\varphi^{(k)}(y)$
has exponential decay at $y=\infty$.

The integral of $\varphi(x)$ over $\R$ is the limit as $a\to 0^+$ of $\int_a^\infty \varphi(x)dx$,
which can be integrated termwise.
Thus,
$$\int_{-\infty}^\infty \varphi(x) dx 
=\frac\pi 8\sum_{n=1}^\infty \frac{16n\chi(n)}{\pi^2 n^2} = \frac{2 L(1,\chi)}{\pi} = \frac 12.$$

Finally, for the positivity of $\varphi$, it suffices by \eqref{FE} to verify it for $x\ge4/\pi$.  This, in turn, follows from the fact that for $m\ge 0$ and $x\ge 4/\pi$,
$$\frac{(4m+3)e^{-\pi^2(4m+3)^2x}}{(4m+1)e^{-\pi^2(4m+1)^2x}} \le 9 e^{-8\pi^2 x} \le e^{-2 \pi} < 1.$$
\end{proof}

The function $\omega(x)$ in Theorem~\ref{Main} is defined to be $\psi(x) x^\delta$.  By \eqref{periodic}, $\omega(4x) = \omega(x)$.

\section{Convergence to $\psi$}
In this section, we prove the main theorem.  
We follow the notation of \S4.  In particular, $\varphi(x)$ will be defined by \eqref{Dirichlet}, $\psi_r$ will be defined as in \eqref{psi r}
where $\phi$ is taken to be $\varphi$, 
and $\psi(x)$ will be the limit of the $\psi_n$, as in Proposition~\ref{Convolve}.  
The key point in the argument is that $A_s = 4^{-k}x_{2^s,k}$ is well approximated by $\varphi_s$,
and the same thing remains true when we compare convolutions of a bounded number of the sequences $A_s$ and the corresponding 
functions $\phi_S$.

For any function $\sigma\colon \R\to \R$, we write $[\sigma](x)$ for the sequence obtained by restricting $\sigma$ to $\Z$.
We use the same notation $\Vert\;\Vert_1$ and $\Vert\;\Vert_\infty$ for norms on $\R$ and $\Z$; which norm is meant in each case should be clear from context.

\begin{lem}
\label{comparing L1}
For any Schwartz function $\sigma\colon \R\to\R$,
$$\Vert [\sigma]\Vert_1 \le \Vert \sigma\Vert_1 + \Vert \sigma'\Vert_1.$$
\end{lem}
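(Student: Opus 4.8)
The plan is to compare the discrete $\ell^1$ norm of $[\sigma]$ with the continuous $L^1$ norm of $\sigma$ by bounding each term $|\sigma(n)|$ in terms of an integral of $\sigma$ and $\sigma'$ over a unit interval near $n$. The natural device is the identity, valid for any $C^1$ function and any $n\in\Z$,
$$\sigma(n) = \int_n^{n+1}\sigma(x)\,dx - \int_n^{n+1}(x-n)\sigma'(x)\,dx,$$
which follows from integration by parts (or equivalently from writing $\sigma(n) = \sigma(x) - \int_n^x \sigma'(t)\,dt$ and integrating over $x\in[n,n+1]$ after rearranging). Since $0\le x-n\le 1$ on $[n,n+1]$, this gives
$$|\sigma(n)| \le \int_n^{n+1}|\sigma(x)|\,dx + \int_n^{n+1}|\sigma'(x)|\,dx.$$

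Then I would simply sum this inequality over all $n\in\Z$. The intervals $[n,n+1]$ tile $\R$ with disjoint interiors, so $\sum_n \int_n^{n+1}|\sigma(x)|\,dx = \Vert\sigma\Vert_1$ and likewise $\sum_n \int_n^{n+1}|\sigma'(x)|\,dx = \Vert\sigma'\Vert_1$, yielding $\Vert[\sigma]\Vert_1 = \sum_n |\sigma(n)| \le \Vert\sigma\Vert_1 + \Vert\sigma'\Vert_1$, which is exactly the claim. The Schwartz hypothesis is only needed to guarantee that all the sums and integrals in question converge absolutely (rapid decay of $\sigma$ and $\sigma'$), so that the interchange of summation and integration is unproblematic and $\Vert\sigma\Vert_1, \Vert\sigma'\Vert_1$ are finite.

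There is essentially no obstacle here; the only point requiring a modicum of care is the choice of the integration-by-parts identity — one wants the weight multiplying $\sigma'$ to be bounded by $1$ on the interval of integration, which the weight $x-n$ on $[n,n+1]$ achieves. One could equally well use $[n-1,n]$ with weight $n-x$, or symmetrize; any such choice works. I would write the one-line identity, note the bound on the weight, sum over $n$, and invoke disjointness of the unit intervals to conclude.
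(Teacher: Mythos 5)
Your approach is essentially the same as the paper's: compare $\sigma(n)$ to the average of $\sigma$ over a neighboring unit interval, control the error by an integral of $|\sigma'|$ over that same interval, and sum over $n$ using the disjointness of the unit intervals. One small slip in the stated identity: integration by parts with weight $v(x)=x-n$ on $[n,n+1]$ produces $\sigma(n+1)$ on the boundary, not $\sigma(n)$; with your chosen interval the correct weight is $n+1-x$, so the identity should read
$$\sigma(n) = \int_n^{n+1}\sigma(x)\,dx - \int_n^{n+1}(n+1-x)\,\sigma'(x)\,dx.$$
Since $n+1-x\in[0,1]$ on that interval, the bound $|\sigma(n)|\le\int_n^{n+1}|\sigma|+\int_n^{n+1}|\sigma'|$ and the rest of the argument are unaffected; you correctly anticipated this robustness in your closing remark.
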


\begin{proof}
By the mean value theorem,
$$| \sigma(x) - \sigma(\lfloor x\rfloor) \le \Vert \sigma'\Vert_\infty.$$
Therefore, for every integer $n$,
$$\int_n^{n+1} |\sigma(x) - \sigma(\lfloor x\rfloor)|dx \le \int_n^{n+1} |\sigma'(x)| dx,$$
so summing over $n$,
\begin{align*}
\Bigm|\int_{-\infty}^\infty \sigma(x)dx - \sum_{-\infty}^\infty \sigma(\lfloor x\rfloor) dx\Bigm|
&\le \sum_{n=-\infty}^\infty \int_n^{n+1} |\sigma(x) - \sigma(\lfloor x\rfloor)|dx \\
&\le \sum_{n=-\infty}^\infty \int_n^{n+1}|\sigma'(x)| dx
=\Vert \sigma'\Vert_1.
\end{align*}
\end{proof}

\begin{lem}
\label{compare two}
For any non-negative Schwartz functions $\sigma$ and $\tau$,
$$\Vert [\sigma\ast\tau] - [\sigma]\ast[\tau]\Vert_\infty \le (\Vert \sigma\Vert_1 +\Vert \sigma'\Vert_1)\Vert \tau'\Vert_\infty + \Vert\sigma'\Vert_\infty (\Vert \tau\Vert_1+\Vert \tau'\Vert_1).$$
\end{lem}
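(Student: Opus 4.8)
The plan is to fix an integer $n$, estimate the difference $(\sigma\ast\tau)(n) - ([\sigma]\ast[\tau])(n)$ directly, and then take the supremum over $n$. Note that $\sigma\ast\tau$ is again Schwartz, so $[\sigma\ast\tau]$ is well defined and the quantity to be bounded is exactly $\sup_n |(\sigma\ast\tau)(n) - ([\sigma]\ast[\tau])(n)|$.

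First I would write both quantities as sums of integrals over unit intervals. Substituting $x\mapsto n-y$ in the continuous convolution gives $(\sigma\ast\tau)(n) = \int_{\R}\tau(y)\sigma(n-y)\,dy = \sum_{m\in\Z}\int_m^{m+1}\tau(y)\sigma(n-y)\,dy$, while the discrete convolution is $([\sigma]\ast[\tau])(n) = \sum_{m\in\Z}\tau(m)\sigma(n-m) = \sum_{m\in\Z}\int_m^{m+1}\tau(m)\sigma(n-m)\,dy$ because the integrand is constant on each unit interval. (All rearrangements are justified by the rapid decay of Schwartz functions.) Subtracting, the difference equals $\sum_{m\in\Z}\int_m^{m+1}\bigl(\tau(y)\sigma(n-y) - \tau(m)\sigma(n-m)\bigr)\,dy$.

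Next I would decompose the integrand telescopically as $\bigl(\tau(y)-\tau(m)\bigr)\sigma(n-m) + \tau(y)\bigl(\sigma(n-y)-\sigma(n-m)\bigr)$ and bound the two resulting sums over $m$ separately. For $y\in[m,m+1]$ the mean value theorem gives $|\tau(y)-\tau(m)|\le\Vert\tau'\Vert_\infty$ and $|\sigma(n-y)-\sigma(n-m)|\le\Vert\sigma'\Vert_\infty$. Hence the first sum is at most $\Vert\tau'\Vert_\infty\sum_{m}|\sigma(n-m)| = \Vert\tau'\Vert_\infty\,\Vert[\sigma]\Vert_1$, and the second is at most $\Vert\sigma'\Vert_\infty\sum_{m}\int_m^{m+1}|\tau(y)|\,dy = \Vert\sigma'\Vert_\infty\,\Vert\tau\Vert_1$. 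Applying Lemma~\ref{comparing L1} to bound $\Vert[\sigma]\Vert_1$ by $\Vert\sigma\Vert_1+\Vert\sigma'\Vert_1$, and weakening $\Vert\sigma'\Vert_\infty\,\Vert\tau\Vert_1$ to $\Vert\sigma'\Vert_\infty(\Vert\tau\Vert_1+\Vert\tau'\Vert_1)$, yields exactly the asserted bound, uniformly in $n$.

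The argument is routine; the only points needing care are pairing $([\sigma]\ast[\tau])(n)$ with the unit-interval decomposition of $(\sigma\ast\tau)(n)$ so that the telescoping is exact, and choosing the split in the last step so that in each product one factor is sampled at the integer $m$ — this is precisely what brings in $\Vert[\sigma]\Vert_1$, hence Lemma~\ref{comparing L1}, and explains the asymmetric shape of the bound.
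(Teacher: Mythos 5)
Your proof is correct and follows essentially the same route as the paper: decompose the difference over unit intervals, telescope the product into two terms, bound each by the mean value theorem, and invoke Lemma~\ref{comparing L1} to control the discrete $\ell^1$ norm. The only minor variation is that your telescoping keeps $\tau$ evaluated at the continuous variable $y$, so the second term yields $\Vert\sigma'\Vert_\infty\Vert\tau\Vert_1$ directly and you weaken it by adding $\Vert\tau'\Vert_1$; the paper's version samples both factors at integers and applies Lemma~\ref{comparing L1} to $\tau$ as well, arriving at the symmetric bound without any slack.
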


\begin{proof}
If $m$ and $n$ are integers and $x\in [0,1]$, then
\begin{align*}
|\sigma(n)&\tau(m-n) - \sigma(n+x)\tau(m-n-x)| \\
&\le \sigma(n)|\tau(m-n)-\tau(m-n+x)| + \tau(m-n+x)|\sigma(n)-\sigma(n-x)| \\
&\le \sigma(n)\int_{m-n}^{m-n+1}|\tau'(x)|dx + \tau(m-n)\int_{n-1}^n |\sigma'(x)|dx.
\end{align*}
Integrating $x$ over $[0,1]$ and then summing $n$ over $\Z$, we get 
$$\Bigm|\sum_{n=-\infty}^\infty \sigma(n)\tau(m-n) - \sigma\ast\tau(m)\Bigm| \le \sum_{n=-\infty}^\infty \sigma(n)\Vert \tau'\Vert_\infty
+ \sum_{n=-\infty}^\infty \tau(m-n)\Vert \sigma'\Vert_\infty,$$
from which the desired inequality follows by Lemma~\ref{comparing L1}.
\end{proof}

\begin{lem}
\label{compare r}
If $\sigma_1,\ldots,\sigma_r$ are non-negative Schwartz functions with $\Vert \sigma_i\Vert_1 +\Vert \sigma'_i\Vert_1\le \frac34$, and $\Vert \sigma'_i\Vert_\infty\le \epsilon$,
then
$$\Vert [\sigma_1\ast\cdots\ast \sigma_r]-[\sigma_1]\ast\cdots\ast [\sigma_r]\Vert_\infty < 3\epsilon.$$
\end{lem}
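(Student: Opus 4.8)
The plan is to induct on $r$, using Lemma~\ref{compare two} as the engine for the inductive step. The base case $r=1$ is trivial (the left side is zero). Assume the result holds for $r-1$ factors, so that $\Vert [\sigma_1\ast\cdots\ast\sigma_{r-1}]-[\sigma_1]\ast\cdots\ast[\sigma_{r-1}]\Vert_\infty < 3\epsilon$; I will combine this with the $r=2$ lemma applied to the pair $(\sigma_1\ast\cdots\ast\sigma_{r-1},\ \sigma_r)$, via the triangle inequality
$$\Vert [\sigma_1\ast\cdots\ast\sigma_r]-[\sigma_1]\ast\cdots\ast[\sigma_r]\Vert_\infty \le \Vert [\rho\ast\sigma_r]-[\rho]\ast[\sigma_r]\Vert_\infty + \Vert ([\rho]-[\sigma_1]\ast\cdots\ast[\sigma_{r-1}])\ast[\sigma_r]\Vert_\infty,$$
where $\rho = \sigma_1\ast\cdots\ast\sigma_{r-1}$.

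For the first term I apply Lemma~\ref{compare two} with $\sigma=\rho$, $\tau=\sigma_r$. This requires controlling $\Vert\rho\Vert_1$, $\Vert\rho'\Vert_1$, $\Vert\rho'\Vert_\infty$, and $\Vert\sigma_r\Vert_1$, $\Vert\sigma_r'\Vert_1$, $\Vert\sigma_r'\Vert_\infty$. The norms of $\sigma_r$ are given by hypothesis. For $\rho$: since $\Vert\sigma_i\Vert_1\le\frac34$ actually forces $\Vert\sigma_i\Vert_1\le\frac12$ in the application — but here I should only use what is assumed, so I need $\Vert\rho\Vert_1 = \prod_{i<r}\Vert\sigma_i\Vert_1$ and $\rho' = \sigma_1'\ast\sigma_2\ast\cdots\ast\sigma_{r-1}$ (derivative of a convolution falls on one factor), giving $\Vert\rho'\Vert_1\le\Vert\sigma_1'\Vert_1\prod_{2\le i<r}\Vert\sigma_i\Vert_1$ and $\Vert\rho'\Vert_\infty\le\Vert\sigma_1'\Vert_\infty\prod_{2\le i<r}\Vert\sigma_i\Vert_1\le\epsilon$. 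The key numerical point is that $\Vert\sigma_i\Vert_1+\Vert\sigma_i'\Vert_1\le\frac34 < 1$, so the products $\prod\Vert\sigma_i\Vert_1$ and sums like $\Vert\rho\Vert_1+\Vert\rho'\Vert_1 \le \prod_{i<r}(\Vert\sigma_i\Vert_1+\Vert\sigma_i'\Vert_1)\le(\frac34)^{r-1}$ stay bounded by geometric series; in particular $\Vert\rho\Vert_1+\Vert\rho'\Vert_1\le\frac34$ still, so Lemma~\ref{compare two} yields a bound of $\frac34\,\Vert\sigma_r'\Vert_\infty + \epsilon\cdot\frac34 \le \frac32\epsilon$ for the first term (I would double-check the exact constant, but it is at most a fixed fraction of $\epsilon$ below $\frac32\epsilon$ or so).

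For the second term, $\Vert ([\rho]-[\sigma_1]\ast\cdots\ast[\sigma_{r-1}])\ast[\sigma_r]\Vert_\infty \le 3\epsilon\cdot\Vert[\sigma_r]\Vert_1$ by the $\ell^\infty$–$\ell^1$ bound of the first lemma of \S3, and $\Vert[\sigma_r]\Vert_1\le\Vert\sigma_r\Vert_1+\Vert\sigma_r'\Vert_1\le\frac34$ by Lemma~\ref{comparing L1}. So the second term is at most $\frac94\epsilon$. The main obstacle — and the place to be careful — is bookkeeping the constants so that the two contributions sum to strictly less than $3\epsilon$: naively $\frac32\epsilon+\frac94\epsilon$ overshoots, so I need to exploit that the geometric factors $(\frac34)^{r-1}$ are in fact $\le\frac34$ and decay, and more importantly that $\Vert[\sigma_r]\Vert_1$ and the norms entering Lemma~\ref{compare two} are bounded by $\frac34$ rather than $1$; tuning these (e.g. bounding the first term by $\frac34\epsilon+\frac9{16}\epsilon$ and the second by $\frac94\epsilon$, then noting the inductive constant can be taken smaller, or re-running the induction with the sharper hypothesis $\Vert[\sigma_1\ast\cdots\ast\sigma_{r-1}]-\cdots\Vert_\infty\le 3\epsilon(1-(\frac34)^{r-1})$) closes the gap. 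I expect the cleanest route is to prove the stronger statement $\Vert[\sigma_1\ast\cdots\ast\sigma_r]-[\sigma_1]\ast\cdots\ast[\sigma_r]\Vert_\infty \le 3\epsilon\sum_{j=1}^{r-1}(\tfrac34)^j$ by induction, whose right side is $< 3\epsilon\cdot\frac{3/4}{1/4}=9\epsilon$ — no, that is too weak; rather $\le 3\epsilon(1-(3/4)^{r-1}) < 3\epsilon$, which is exactly what is wanted, and the inductive step goes through because the new term contributes a constant times $(\frac34)^{r-1}\epsilon$ and the carried error is multiplied by $\Vert[\sigma_r]\Vert_1\le\frac34$.
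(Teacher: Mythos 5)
Your framework is the same as the paper's: induct on $r$, peel off one factor, apply Lemma~\ref{compare two} to the pair $(\text{big convolution},\,\text{single factor})$, and bound the carried error by $\Vert[\sigma_r]\Vert_1 \le \tfrac34$ using Lemma~\ref{comparing L1} and the $\ell^1$--$\ell^\infty$ convolution bound. (The paper peels $\sigma_1$ off the front rather than $\sigma_r$ off the back, but that is cosmetic.) Your norm estimates for $\rho=\sigma_1\ast\cdots\ast\sigma_{r-1}$ are also correct and give the recursion
$$E_r \le 2\bigl(\tfrac34\bigr)^{r-1}\epsilon + \tfrac34\, E_{r-1},\qquad E_2\le\tfrac32\epsilon,$$
where $E_r$ denotes the quantity to be bounded.

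The gap is in the final closing move. The sharpened induction hypothesis you propose, $E_r\le 3\epsilon\bigl(1-(\tfrac34)^{r-1}\bigr)$, is false already at $r=2$: Lemma~\ref{compare two} yields only $E_2\le\tfrac32\epsilon$, while the hypothesis would demand $E_2\le\tfrac34\epsilon$. (It also fails to be preserved by the recursion for $r=3,4$.) The correct closed form for the recursion above is $E_r\le (2r-2)\bigl(\tfrac34\bigr)^{r-1}\epsilon$, which one checks directly: $c_r=2(\tfrac34)^{r-1}+\tfrac34 c_{r-1}$ with $c_2=\tfrac32$ gives $c_r=(2r-2)(\tfrac34)^{r-1}$, and $\max_{r\ge2}(2r-2)(\tfrac34)^{r-1}=\tfrac{81}{32}<3$, attained at $r=4,5$. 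This is exactly the bound the paper proves. So your argument is one substitution away from working; the issue is purely that the quantity $3\epsilon(1-(\tfrac34)^{r-1})$ decays to $0$ at $r=2$ faster than the base case allows, whereas $(2r-2)(\tfrac34)^{r-1}\epsilon$ rises to a maximum strictly below $3\epsilon$ before decaying.
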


\begin{proof}
We prove the upper bound $(2r-2)(3/4)^{r-1}\epsilon$  for $r\ge 2$ by induction on $r$, and that implies the claim.
The case $r=2$ follows from  Lemma~\ref{compare two}.
If $r\ge 3$, then 
\begin{align*}
\Vert [\sigma_1\ast\cdots\ast \sigma_r]&-[\sigma_1]\ast\cdots\ast [\sigma_r]\Vert_\infty\\
&\le \Vert [\sigma_1\ast\cdots\ast \sigma_r]-[\sigma_1]\ast[\sigma_2\ast\cdots\ast \sigma_r]\Vert_\infty\\
&\qquad\qquad+\Vert [\sigma_1]\ast ([\sigma_2\ast\cdots\ast\sigma_r]-[\sigma_2]\ast\cdots\ast[\sigma_r])\Vert_\infty\\
&\le \Vert [\sigma_1\ast\cdots\ast \sigma_r]-[\sigma_1]\ast[\sigma_2\ast\cdots\ast \sigma_r]\Vert_\infty\\
&\qquad\qquad+\Vert [\sigma_1]\Vert_1  \Vert([\sigma_2\ast\cdots\ast\sigma_r]-[\sigma_2]\ast\cdots\ast[\sigma_r])\Vert_\infty.
\end{align*}
We have
$$\Vert \sigma_2\ast\cdots\ast\sigma_r\Vert_1 + \Vert(\sigma_2\ast\cdots\ast\sigma_r)'\Vert_1 
\le \Vert (\sigma_2+\sigma'_2)\Vert_1 \Vert \sigma_3\Vert_1\cdots \Vert \sigma_r\Vert_1 \le \bigl(\frac 34\bigr)^{r-1}$$
and
$$\Vert(\sigma_2\ast\cdots\ast\sigma_r)'\Vert_\infty \le \Vert\sigma_2'\Vert_\infty \Vert \sigma_3\Vert_1\cdots \Vert \sigma_r\Vert_1 \le \bigl(\frac 34\bigr)^{r-2}\epsilon.$$
Applying the $r=2$ case to $\sigma_1$ and $\sigma_2\ast\cdots\ast\sigma_r$
and using the induction hypothesis for $\sigma_2,\ldots,\sigma_r$,
$$\Vert [\sigma_1\ast\cdots\ast \sigma_r]-[\sigma_1]\ast\cdots\ast [\sigma_r]\Vert_\infty \le 2\bigl(\frac 34\bigr)^{r-1}\epsilon + (2r-4)\bigl(\frac 34\bigr)^{r-1}\epsilon.$$
\end{proof}

Given positive  integers $r_1\le r_2$, we set 
$$\sigma_1 = \varphi_{r_1},\sigma_2 = \varphi_{r_1+1},\cdots,\sigma_{1+r_2-r_1}=\varphi_{r_2}.$$
We have, therefore, 
$$\Vert \sigma_i \Vert_1 = \frac 12,\ \Vert \sigma_i\Vert_\infty = O(4^{-r_1-i}),\ \Vert \sigma'_i \Vert_1 = O(4^{-r_1-i}),\ \Vert \sigma'_i \Vert_\infty = O(16^{-r_i-i}),$$
where the implicit constants are absolute.  By Lemma~\ref{compare r}, if $r_1$ is sufficiently large,
\begin{equation}
\label{compare convolutions}
\Vert \sigma_1\ast\cdots\ast\sigma_{1+r_2-r_1} - [\sigma_1]\ast\cdots\ast[\sigma_{1+r_2-r_1}]\Vert_\infty  = O(16^{-r_1}).
\end{equation}

%

Let $\tau_i(x)=0$ for $x\le 0$ and 
$$\tau_i(x) = 
2^{m-i-n}\sum_{j=1}^{2^{n+i-m}} (-1)^{j+1}\sin \frac{(2j-1)\pi}{2^{n+i+1-m}}\Bigl(\frac{1+\cos \frac{(2j-1)\pi}{2^{n+i+1-m}}}{2}\Bigr)^x$$
for $x> 0$.
For $x>0$, we can write
$$\sigma_i(x) = 2^{m-i-n}\sum_{j=1}^\infty (-1)^{j+1} \frac{(2j-1)\pi}{2^{n+i+1-m}}e^{-\frac{(2j-1)^2\pi^2}{4^{n+i-m}}x}.$$

\begin{lem}
\label{A prime}
Let $A_1,\ldots,A_r$ and $A'_1,\ldots,A'_r$ are summable functions $\Z\to\R$, then
$$\Vert A_1\ast\cdots\ast A_r -  A'_1\ast\cdots\ast A'_r\Vert_\infty \le \sum_{i=1}^r \Vert A_i-A'_i\Vert_\infty \prod_{j\neq i} \max (\Vert A_j\Vert_1,\Vert A'_j\Vert_1).$$
\end{lem}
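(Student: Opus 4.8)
The statement to prove, Lemma~\ref{A prime}, is a standard multilinearity-plus-telescoping estimate, so the plan is to deduce it from the two-function case by a telescoping argument. First I would record the base case: for two summable functions $A_1, A_1'$ and $A_2, A_2'$, we have the identity
$$A_1\ast A_2 - A_1'\ast A_2' = (A_1 - A_1')\ast A_2 + A_1'\ast (A_2 - A_2'),$$
and then apply the submultiplicativity of convolution with respect to $\ell^1$ and $\ell^\infty$ norms (the first displayed lemma of \S3, i.e. $\Vert A\ast B\Vert_\infty \le \Vert A\Vert_1\Vert B\Vert_\infty$) to each term. This yields
$$\Vert A_1\ast A_2 - A_1'\ast A_2'\Vert_\infty \le \Vert A_1 - A_1'\Vert_\infty \Vert A_2\Vert_1 + \Vert A_1'\Vert_1 \Vert A_2 - A_2'\Vert_\infty,$$
which is the $r=2$ instance of the claimed bound (after replacing $\Vert A_j\Vert_1$ and $\Vert A_j'\Vert_1$ by their maximum).

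For general $r$ I would telescope: write
$$A_1\ast\cdots\ast A_r - A_1'\ast\cdots\ast A_r' = \sum_{i=1}^r A_1'\ast\cdots\ast A_{i-1}'\ast (A_i - A_i')\ast A_{i+1}\ast\cdots\ast A_r,$$
where the $i$th summand replaces the first $i-1$ factors by primed versions and the last $r-i$ factors by unprimed versions. Taking $\ell^\infty$ norms and applying the submultiplicative bound to each summand — placing the $\ell^\infty$ norm on the factor $A_i - A_i'$ and $\ell^1$ norms on all the others — gives
$$\Vert A_1\ast\cdots\ast A_r - A_1'\ast\cdots\ast A_r'\Vert_\infty \le \sum_{i=1}^r \Vert A_i - A_i'\Vert_\infty \Bigl(\prod_{j<i}\Vert A_j'\Vert_1\Bigr)\Bigl(\prod_{j>i}\Vert A_j\Vert_1\Bigr).$$
Since $\Vert A_j'\Vert_1 \le \max(\Vert A_j\Vert_1, \Vert A_j'\Vert_1)$ and likewise for $\Vert A_j\Vert_1$, this is bounded by the right-hand side of the lemma, completing the proof.

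There is no serious obstacle here; the only thing requiring a word of care is verifying the telescoping identity, which is immediate since the sum is a collapsing sum whose consecutive terms share a common convolution expression with one factor changed from $A_i$ to $A_i'$. The same $\ell^1$/$\ell^\infty$ submultiplicativity lemma used throughout \S3 (proved there for non-negative functions, but the proof of the $\ell^\infty$ bound works verbatim for any summable functions, or one can apply it to $|A_i|$) supplies every estimate needed, so the argument is purely formal.
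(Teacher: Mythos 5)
Your proof is correct and follows essentially the same route as the paper: a telescoping decomposition of $A_1\ast\cdots\ast A_r - A_1'\ast\cdots\ast A_r'$ into $r$ terms each containing a single factor $A_i-A_i'$, followed by $\ell^1/\ell^\infty$ submultiplicativity of convolution; the only cosmetic difference is that the paper primes the factors \emph{after} the index $i$ while you prime those \emph{before}, which changes nothing. Your remark that the submultiplicativity estimate (stated in \S3 for nonnegative functions) extends to general summable functions is a reasonable point of care, and indeed it is needed since $A_i-A_i'$ can change sign.
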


\begin{proof}
By the triangle inequality, 
\begin{align*}\Vert A_1\ast\cdots\ast A_r &-  A'_1\ast\cdots\ast A'_r\Vert_\infty \\
&\le \sum_{i=1}^r  \Vert A_1\ast\cdots\ast A_{i-1}\ast(A_i-A'_i)\ast A'_{i+1}\ast\cdots\ast A'_r\Vert_\infty\\
&\le \sum_{i=1}^r  \Vert A_1\ast\cdots\ast A_{i-1}\ast A'_{i+1}\ast\cdots\ast A'_r\Vert_1\Vert A_i-A'_i\Vert_\infty.
\end{align*}
For each $i$,
\begin{align*}
\Vert A_1\ast\cdots\ast A_{i-1}\ast A'_{i+1}\ast\cdots\ast A'_r\Vert_1 &= \prod_{j=1}^{i-1} \Vert A_j\Vert_1\prod_{j=i+1}^r \Vert A'_j\Vert_1\\
&\le \prod_{j\neq i} \max(\Vert A_j\Vert_1,\Vert A'_j\Vert_1).
\end{align*}
\end{proof}

\begin{prop}
\label{A vs phi}
For all $k\ge 0$, we have
$$4^{-k} x_{2^s,k} = \varphi_s(k-1)+O(8^{-s}).$$
\end{prop}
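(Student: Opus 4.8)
The plan is to compare the explicit closed form for $x_{2^s,k}$ coming from \eqref{GF} with the termwise evaluation of $\varphi_s$, using the functional equation \eqref{FE} to control the error. First I would use \eqref{GF} to write, for $k \ge 2^s$,
$$4^{-k}x_{2^s,k} = 2^{-s}\sum_{j=1}^{2^s}(-1)^{j+1}\sin\frac{(2j-1)\pi}{2^{s+1}}\Bigl(\frac{1+\cos\frac{(2j-1)\pi}{2^{s+1}}}{2}\Bigr)^{k-1},$$
since $4^{-1}(2+2\cos\theta) = (1+\cos\theta)/2$. The case $k < 2^s$ (only finitely many values, all $\le 2^s$) is separate but trivial: both sides are $O(8^{-s})$ there, because $x_{2^s,k} = 0$ for $k < 2^s$ and the claimed main term $\varphi_s(k-1)$ is exponentially small in $4^s/k \ge 1$, hence certainly $O(8^{-s})$ uniformly; I would dispose of this range first so that I may assume $k \ge 2^s$. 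On the other side, from the definition $\varphi_s(x) = 4^{-s}\varphi(4^{-s}x)$ and \eqref{Dirichlet},
$$\varphi_s(k-1) = \frac{\pi}{8}\cdot 4^{-s}\sum_{n=1}^\infty \chi(n)\, n\, e^{-\frac{\pi^2 n^2}{16}4^{-s}(k-1)} = 2^{-s}\sum_{j=1}^\infty (-1)^{j+1}\frac{(2j-1)\pi}{2^{s+1}} e^{-\frac{(2j-1)^2\pi^2}{4^{s+1}}(k-1)},$$
using $\chi(n) = 0$ for $n$ even and $\chi(2j-1) = (-1)^{j+1}$.

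The strategy is then a two-step comparison. In the first step I match, term by term in $j$ for $1 \le j \le 2^s$, the factor $\sin\frac{(2j-1)\pi}{2^{s+1}}$ against $\frac{(2j-1)\pi}{2^{s+1}}$ and the base $\bigl(\frac{1+\cos\frac{(2j-1)\pi}{2^{s+1}}}{2}\bigr)^{k-1}$ against $e^{-\frac{(2j-1)^2\pi^2}{4^{s+1}}(k-1)}$. For the sine factor, $\sin u = u + O(u^3)$ with $u = (2j-1)\pi/2^{s+1}$. For the exponential base, writing $c_j = \cos\frac{(2j-1)\pi}{2^{s+1}}$ one has $\frac{1+c_j}{2} = 1 - \frac{u^2}{4} + O(u^4)$ as $u \to 0$, so $\log\frac{1+c_j}{2} = -\frac{u^2}{4} + O(u^4)$, hence $\bigl(\frac{1+c_j}{2}\bigr)^{k-1} = e^{-(k-1)u^2/4}\bigl(1 + O((k-1)u^4)\bigr)$, and $(k-1)u^2/4 = \frac{(2j-1)^2\pi^2}{4^{s+1}}(k-1)$ exactly matches the exponent in $\varphi_s$. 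The small-$j$ terms dominate: the base is bounded by $e^{-c\,j^2 k/4^s}$ for an absolute $c>0$ (for $j$ in the relevant range $\le 2^s$, $u \le \pi/2$ so the base is bounded away from $1$), so the sum over $j$ effectively truncates at $j = O(\sqrt{4^s/k}\,\log(\cdot)) \le O(2^s)$, and the per-term error $O(u^3) + O((k-1)u^4)$ summed against the Gaussian weight contributes $O(2^{-s}\cdot 2^{-2s}\cdot(\text{something polynomially bounded})) = O(8^{-s})$; I would organize this as a geometric/Gaussian tail estimate. In the second step I must add back the tail $\sum_{j > 2^s}$ of the series for $\varphi_s(k-1)$, which is absent from the finite sum; but for $k \ge 2^s$ and $j > 2^s$ the exponent $\frac{(2j-1)^2\pi^2}{4^{s+1}}(k-1) \ge \frac{(2\cdot 2^s)^2\pi^2}{4^{s+1}}\cdot(2^s - 1) \gtrsim 2^s$, so this tail is exponentially small in $2^s$, hence $O(8^{-s})$.

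The main obstacle I expect is making the error term genuinely \emph{uniform in $k$}, i.e. absorbing everything into $O(8^{-s})$ with the implied constant independent of $k$. The danger is the per-term relative error $O((k-1)u^4)$, which grows with $k$; but it is controlled because it is always accompanied by the Gaussian factor $e^{-(k-1)u^2/4}$, and $x\, e^{-x}$ is bounded, so $(k-1)u^4 \cdot e^{-(k-1)u^2/4} = u^2 \cdot \bigl((k-1)u^2 e^{-(k-1)u^2/4}\bigr) = O(u^2)$; summing $O(u^2)$ over $j$ (with $u = (2j-1)\pi/2^{s+1}$) against the remaining Gaussian gives a convergent sum of size $O(2^{-s} \cdot 2^{-2s}) = O(8^{-s})$. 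Similarly the sine correction $O(u^3)$ times the base, summed, is $O(8^{-s})$ uniformly. Once these uniform bounds are in hand, combining the two steps yields $4^{-k}x_{2^s,k} = \varphi_s(k-1) + O(8^{-s})$ for all $k \ge 0$, with an absolute implied constant, which is exactly the assertion of Proposition~\ref{A vs phi}. I would also double-check the off-by-one: the exponent carries $k-1$ (matching the $(\cdots)^{k-1}$ in \eqref{GF}), which is why the statement reads $\varphi_s(k-1)$ rather than $\varphi_s(k)$, and the difference $\varphi_s(k) - \varphi_s(k-1) = O(\Vert\varphi_s'\Vert_\infty) = O(16^{-s})$ would in any case be swallowed, but keeping $k-1$ makes the term-by-term match exact and cleaner.
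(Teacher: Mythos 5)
Your overall strategy --- comparing the closed form \eqref{GF} for $x_{2^s,k}$ against the theta series for $\varphi_s(k-1)$ term by term, using $\sin u = u + O(u^3)$ and $\log((1+\cos u)/2) = -u^2/4 + O(u^4)$ --- is indeed the approach the paper takes for large $k$. But your argument has a real gap in the intermediate range $2^s \le k \lesssim 2^{3s/2}$, and the paper treats that range with a completely separate argument precisely because the term-by-term comparison is too lossy there.

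Here is the difficulty. Your per-term error, after combining the $O(u^3)$ sine correction and the base-factor correction (bounded via $ye^{-y}\le 1/e$ as you suggest), is of size $O(2^{-s}u_j^3 e^{-c(k-1)u_j^2})$ with $u_j = (2j-1)\pi/2^{s+1}$. Summing over $j$ and setting $\alpha = c(k-1)/4^s$, one gets
\begin{equation*}
2^{-s}\sum_{j\ge 1} u_j^3 e^{-c(k-1)u_j^2} \asymp 16^{-s}\sum_j j^3 e^{-\alpha j^2} \asymp
\begin{cases} 16^{-s} & \text{if } \alpha \gtrsim 1,\\ 16^{-s}\alpha^{-2} \asymp k^{-2} & \text{if } \alpha \lesssim 1.\end{cases}
\end{equation*}
Thus the bound you obtain is $O(\max(k^{-2}, 16^{-s}))$, which equals $O(8^{-s})$ only when $k \gtrsim 2^{3s/2}$. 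At $k \approx 2^s$ the estimate is only $O(4^{-s})$, which does not prove the proposition. The underlying reason is that for $k$ near $2^s$ both $4^{-k}x_{2^s,k}$ and $\varphi_s(k-1)$ are super-exponentially small because of cancellation in the alternating sums, and a term-by-term bound cannot see this cancellation. You dismiss the range $k < 2^s$ (correctly, since $x_{2^s,k}=0$ there), but you then assume $k\ge 2^s$ suffices for the comparison --- it does not. The paper's proof restricts the direct comparison to $k\ge 2^{3s/2}$, and for $2^s\le k < 2^{3s/2}$ it shows both sides are individually $o(8^{-s})$: for $\varphi_s(k-1)$ via the vanishing to all orders of $\varphi$ at $0^+$, and for $4^{-k}x_{2^s,k}$ via a generating-function argument bounding $X_{2^s}(1/4-1/(4k))$ by iterating the recursion \eqref{recurrence} and getting a doubly-exponential decay. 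You would need to supply some such argument to close the gap.

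A smaller point: your algebra has compensating factor-of-$4$ slips. You wrote $4^{-k}(2+2\cos u)^{k-1}=((1+\cos u)/2)^{k-1}$, which drops a $1/4$ (it equals $\frac14((1+\cos u)/2)^{k-1}$), and you wrote the exponent of $\varphi_s$ as $(2j-1)^2\pi^2(k-1)/4^{s+1}$ when it is in fact $(2j-1)^2\pi^2(k-1)/4^{s+2}$, which also inflates the constant by $4$; you then computed $(k-1)u^2/4$ with a matching error. These cancel in your final comparison, so the conclusion you draw is not affected, but a reader checking your formulas against \eqref{GF} and \eqref{Dirichlet} will stumble. And the remark that $u\le \pi/2$ for $j\le 2^s$ is not right ($u_j\to\pi$ as $j\to 2^s$); this is harmless for large $k$ because of the Gaussian cutoff, but the Taylor estimates you use are not uniformly valid over the whole range of $j$, which is why the paper only compares directly for $j\le 2^{s/4}$ and bounds the remaining $j$ as geometrically decaying tails.
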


\begin{proof}

Suppose $k\ge 2^{3s/2}$.
For $a>b>0$, the maximum of $e^{-bx}-e^{-ax}$ on $[0,\infty)$ depends only on $a/b$, so we may consider the case $b=1$.  The maximum value is achieved at $\frac{\log a}{a-1}$,
and by l'H\^opital's rule, this value divided by $a-1$ approaches $1/e$ as $a\to 1$.  As
$$\log\Bigl(\frac{1+\cos x}2\Bigr) = -\frac{x^2}4 + O(x^4),$$
for $j\le 2^{s/4}$,
$$\Bigl(\frac{1+\cos\frac{(2j-1)\pi}{2^{s+1}}}2\Bigr)^{k-1} - \exp\Bigl(-\frac{(2j-1)^2\pi^2}{4\cdot 4^{s+1}}(k-1)\Bigr) = O(8^{-s}).$$
Moreover,
$$2^{-s} \sin \frac{(2j-1)\pi}{2^{s+1}} = (2j-1)2^{-2s-1}\pi + O(j^3 2^{-4s}).$$
Therefore,
\begin{align*}
2^{-s}&\sum_{j\le 2^{m/2}}\Bigm| \sin \frac{(2j-1)\pi}{2^{s+1}}\Bigl(\frac{1+\cos\frac{(2j-1)\pi}{2^{s+1}}}2\Bigr)^{k-1}\\
& \qquad\qquad\qquad\qquad\qquad\qquad- \frac{(2j-1)\pi}{2^{s+1}}\exp\Bigl(-\frac{(2j-1)^2\pi^2}{4\cdot 4^{s+1}}(k-1)\Bigr)\Bigm|\\
&=O(8^{-s}).
\end{align*}
Moreover, for $m$ sufficiently large, the sums
$$2^{-s}\sum_{2^{m/2}<j\le 2^s} \sin \frac{(2j-1)\pi}{2^{s+1}}\Bigl(\frac{1+\cos\frac{(2j-1)\pi}{2^{s+1}}}2\Bigr)^{k-1}$$
and
$$\sum_{j> 2^{m/2}}\frac{(2j-1)\pi}{2^{s+1}}\exp\Bigl(-\frac{(2j-1)^2\pi^2}{4\cdot 4^{s+1}}(k-1)\Bigr)$$
both have the properties that each term is less than half of the previous term,
and the initial term is $o(8^{-s})$.  The proposition follows for $k\ge 2^{3s/2}$.

We may therefore assume $3\le 2^s\le k < 2^{3s/2}$.
As $\varphi(x)$ is Schwartz and identically $0$ for $x<0$, it follows that $\varphi(x) = o(x^6)$ as $x\to 0$, so $\varphi((k-1) 4^{-s-2})$ is $o(8^{-s})$.

To prove that $4^{-k}x_{2^s,k}$ is also $o(8^{-s})$, it suffices to show that $X_s(1/4-1/4k)$ is $o(8^{-s})$.
For this, we observe
$$X_1\bigl(\frac 14-\frac 1{4k}\bigr) = \frac{\frac 14-\frac 1{4k}}{\frac 12 + \frac 1{2k}} < \frac 12 - \frac 1k.$$
For all $z\ge 0$,
$$\frac{(\frac 12-\frac 1{4z+2})^2}{1-2 (\frac 12-\frac 1{4z+2})^2} < \frac 12 - \frac 1{z+2},$$
so by induction, 
$$X_{2^i}\bigl(\frac 14-\frac 1{4k}\bigr) < \frac 12 - \frac 1{4^{-i}(k-2)+2}$$
for $0\le i\le s-1$.  If $r$ is the smallest integer such that $4^{-r}(k-2)<1$, then $r = m/4+O(1)$, and
$$X_{2^r}\bigl(\frac 14-\frac 1{4k}\bigr) < \frac 16.$$

This implies 
$$1-2X_{2^i}\bigl(\frac 14-\frac 1{4k}\bigr)^2 \ge \frac{17}{18}$$
for all $i\ge r$, so by induction,
$$X_{2^i}\bigl(\frac14-\frac 1{4k}\bigr) \le (\frac 16)^{2^{i-r}}(\frac{18}{17})^{2^{i-r}-1}  \le (\frac 3{17})^{2^{i-r}} \le 4^{-2^{i-r}}$$
for $i\ge r$.  Applying this for $i=s$, we get a much stronger upper bound than is needed.

\end{proof}

We now define $A_s(k) = 4^{-k}x_{2^s,k}$ and $A'_s = [\varphi_s]$.  As usual, we define $B_T(k)$ to be the sum of $A_S(k)$ over all finite subsets $S\subset T\cap \N$.
Thus, 
$$B_{\R}(k) = 4^{-k} b_{2k}^{G,V}.$$

\begin{prop}
Defining $A_s(k) = 4^{-k}x_{2^s,k}$, the sequence $A_0,A_1,A_2,\ldots$ satisfies properties (I)--(III) of \S2.
\end{prop}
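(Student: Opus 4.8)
The plan is to verify properties (I)--(III) of \S3 one at a time, using the closed formula \eqref{GF} for $x_{2^s,k}$ and the comparison $4^{-k}x_{2^s,k} = \varphi_s(k-1)+O(8^{-s})$ of Proposition~\ref{A vs phi}. Property (I) asserts $\sum_k A_s(k) = 1/2$; this amounts to evaluating $X_{2^s}(1/4)$, so first I would show $X_{2^s}(1/4) = 1/2$ for all $s$. By Proposition~\ref{F} this is equivalent to $F^{\circ s}(4-2) = F^{\circ s}(2) = 2$, which is immediate since $F(2) = 2^2 - 2 = 2$, so $2$ is a fixed point of $F$. Hence $\sum_k A_s(k) = \sum_k 4^{-k}x_{2^s,k} = X_{2^s}(1/4) = 1/2$, giving (I) exactly (no error term needed).

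For property (II), I would use \eqref{GF} together with Proposition~\ref{A vs phi}. For $k \ge 2^{3s/2}$ (equivalently $\log_4 k$ not much smaller than $s$), Proposition~\ref{A vs phi} gives $A_s(k) = \varphi_s(k-1) + O(8^{-s})$, and since $\varphi$ is Schwartz, $\varphi_s(k-1) = 4^{-s}\varphi(4^{-s}(k-1))$ decays faster than any power of $4^s/k$; combined with the fact that for $k$ in this range $8^{-s} = O(4^{-s}(4^s/k)^r)$ holds once we absorb the constant, this yields the bound $A_s(k) < C_r 4^{-s}(4^s/k)^r$. For the complementary range $2^s \le k < 2^{3s/2}$, Proposition~\ref{A vs phi} already shows $A_s(k) = o(8^{-s})$, which is far smaller than $4^{-s}(4^s/k)^r \ge 4^{-s}(4^s/2^{3s/2})^r = 4^{-s} 2^{sr/2}$, so the bound holds trivially there too. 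For $0 < k < 2^s$ one has $x_{2^s,k} = 0$ since any path of length $k<2^s$ from $0$ reaching $2^s$ would need to climb too far (the only weight-increasing edge is the label-$1$ arrow $n\to n+1$), so $A_s(k)=0$ and the bound is vacuous. Assembling these cases gives (II).

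Property (III) is the estimate $|A_s(k_1) - A_s(k_2)| < C(|k_1-k_2|16^{-s} + 8^{-s})$, and I expect this to be the main obstacle, since it requires controlling the \emph{differences} of consecutive values, not just the values themselves. The strategy is again to split on the size of $k_i$ relative to $2^s$. In the Schwartz regime $k_i \ge 2^{3s/2}$ I would compare $A_s(k_i)$ with $\varphi_s(k_i-1)$ using the $O(8^{-s})$ error, and then bound $|\varphi_s(k_1-1) - \varphi_s(k_2-1)| \le |k_1-k_2| \cdot \|\varphi_s'\|_\infty = |k_1-k_2| \cdot 16^{-s}\|\varphi'\|_\infty$ by the mean value theorem, using \eqref{pth derivative} with $p=1$; the two $O(8^{-s})$ error terms account for the additive $8^{-s}$ in (III). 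When both $k_i$ are in the small range $k_i < 2^{3s/2}$, the $o(8^{-s})$ bound on each $A_s(k_i)$ from Proposition~\ref{A vs phi} makes the difference $o(8^{-s})$ directly. The remaining case where $k_1$ and $k_2$ straddle $2^{3s/2}$ is handled by combining the two previous estimates at an intermediate point $k \approx 2^{3s/2}$, noting that the factor $|k_1-k_2|16^{-s}$ is at worst comparable to the contributions already bounded since the relevant differences of indices in each sub-range are controlled. Throughout, the key inputs are Proposition~\ref{A vs phi} (to replace $A_s$ by the smooth $\varphi_s$ with small error) and the scaling identities \eqref{pth derivative} for $\varphi_s$ and its derivative; the delicate bookkeeping is ensuring the error terms land at exactly the orders $16^{-s}$ and $8^{-s}$ demanded by (III).
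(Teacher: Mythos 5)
Your proposal is correct and takes essentially the same route as the paper: property (I) is verified by evaluating $X_{2^s}(1/4)$ and using the fact that $2$ is a fixed point of $F$ (equivalent, as the paper notes, to invoking the recurrence \eqref{recurrence}), and properties (II) and (III) are deduced from Proposition~\ref{A vs phi} together with the rapid decay of $\varphi$ at $\infty$ and the boundedness of $\varphi'$. The paper's own justification of (II) and (III) is a single sentence; your fuller case analysis (splitting on $k$ vs.\ $2^{3s/2}$, noting $A_s(k)=0$ for $0<k<2^s$, applying the mean value theorem to $\varphi_s$) merely spells out the same reasoning.
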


\begin{proof}
We have
$$\sum_{k=0}^\infty A_s(k)t^k = X_{2^s}(t/4).$$
By \eqref{X_1}, $X_1(1/4) = 1/2$.
Using \eqref{recurrence} and induction on $n$, we deduce that  $X_{2^s}(1/4) = 1/2$ for all $s\ge 0$.  This implies (I).

By Proposition~\ref{A vs phi}, (II) follows from the fact that $\varphi(x) = O(x^{-r})$ as $x\to \infty$, and (III) follows from the fact that $|\varphi'(x)|$ is bounded on $\R$.
\end{proof}

We can now prove Theorem~\ref{Main}.

\begin{proof}
As $\varphi_s(k-1)-\varphi_s(k)=O(16^{-s})$.  By Proposition~\ref{A vs phi}, $|A_s(k)-A'_s(k)| = O(8^{-s})$, so Lemma~\ref{A prime} implies 
that for fixed $r$, for $s\ge 4r$, and for $S=\{s_1,s_2,\ldots\}\subset [s-r,r+s]$,
$$\Vert A_S - [\varphi_{s_1}]\ast[\varphi_2]\ast\cdots\Vert_\infty = O(8^{-s}).$$
By \eqref{compare convolutions},
$$\Vert A_S - [\varphi_S]\Vert_\infty = O(8^{-s}),$$
and summing over $S\subset [s-r,s+r]$,
$$\Vert B_{[s-r,s+r]} - [\psi_{[s-r,s+r]}]\Vert\infty = O(2^{-5s/2}).$$
By Proposition~\ref{Limit of B}, for all $\epsilon > 0$, if $r$ is large enough in terms of $\epsilon$,
then for all $k\in [4^s,4^{s+1})$,
$$\bigm|\bigl(\frac 32\bigr)^{-s}B_{\R}(k) - \bigl(\frac 32\bigr)^{-r}\psi_{[s-r,s+r]}(k)\bigm| 
\le \frac\epsilon2 4^{-s}.$$
As $\psi_r$ converges to $\psi$ on $[1,4]$, for $r$ sufficiently large
$$|\psi_r(4^{-s}k) - \psi(4^{-s}k)| \le \frac\epsilon 2,$$
so by \eqref{periodic},
$$\bigm|\bigl(\frac 32\bigr)^{-r}4^s \psi_{[s-r,s+r]}(k) - \bigl(\frac 83\bigr)^s \psi(k)\bigm| \le \frac\epsilon 2.$$
Therefore,
$$\bigm|\bigl(\frac 32\bigr)^{-r} \psi_{[s-r,s+r]}(k) - \bigl(\frac 23\bigr)^s \psi(k)\bigm| \le \frac\epsilon 24^{-s},$$
which implies
$$|B_{\R}(k) -\psi(k)| \le \epsilon \bigl(\frac 83\bigr)^{-s}.$$
As $k<4^{s+1}$, and $\omega(x) = \psi(x) x^\delta = \psi(x) x^{\log_4(8/3)}$ is bounded  away from zero, 
$$\psi(k)\bigl(\frac83\bigr)^{-s} \ge \frac 38\omega(k)$$
implies 
$$\psi(k)^{-1}B_{\R}(k)\to 1$$
as $k\to \infty$.

\end{proof}

\section{A lower bound for powers of any tilting representation}

We conclude by proving that a lower bound of the type predicted in \cite{COT} exists for all tilting representations of $\SL_2$ in characteristic $2$.

\begin{thm}
If $G=\SL_2$ over an algebraically closed field of characteristic $2$ and $W$ is any tilting representation of $G$, then there exists $c_W>0$ such that for all $k\ge 1$,
$$b_k^{G,W} \ge c_W k^{-\delta} (\dim W)^k.$$
\end{thm}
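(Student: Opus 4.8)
The plan is to reduce the general tilting module $W$ to the case $W = V^{\otimes 2}$ already controlled by Theorem~\ref{Main}, using the fact that the number of indecomposable summands is multiplicative (in the appropriate sense) under tensoring with $V^{\otimes 2}$ and comparing leading terms. First I would write $W$ as a direct sum of indecomposable tilting modules $T(n_1),\ldots,T(n_\ell)$ and note that $b_k^{G,W}$ is at least $b_k^{G,T(n)}$ for any single summand $T(n)$ appearing in $W$; so it suffices to prove the bound for $W = T(n)$, and in fact, since $T(n)\otimes V^{\otimes 2}$ contains $T(n)$ again as a summand (from the fusion rules computed in \S2, the ``diagonal'' arrow of weight $2$ — the loop at each vertex of the fusion graph), it even suffices to handle $W = V^{\otimes 2}$ itself, up to adjusting the constant $c_W$ by a bounded factor.

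The key quantitative step is the following: for any indecomposable tilting $T(n)$ with $n\ge 1$, the graph of \S2 has a loop of label $2$ at the vertex $n$ (coming from the $T(2n+2-2^1) = T(2n)$ term), so $V^{\otimes 2}\otimes T(n)$ contains $T(n)^{\oplus 2}$ (or at least $T(n)^{\oplus 1}$ when $n$ is a special value). Iterating, $V^{\otimes 2j}\otimes T(n)$ contains $T(n)^{\oplus 2^{j-O(\log n)}}$, and more importantly, tensoring $T(n)$ with a path of length $j$ in the fusion graph that stays ``near'' $n$ and then tensoring with a terminal tail of loops produces at least $b_j^{G,V}$-many indecomposable summands up to a multiplicative constant depending only on $n$. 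Concretely, I would argue: given a path $\gamma$ of length $j$ from $0$ to $m$ in the fusion graph with product-of-labels weight $w(\gamma)$, one obtains (by concatenating with the analogous structure starting from $n$ rather than $0$, which is legitimate because the fusion rules for $V^{\otimes 2}\otimes T(2\nu)$ depend only on the local $2$-adic structure near $\nu$ and the graph restricted to $[n,\infty)$ dominates the graph restricted to $[0,\infty)$) a corresponding family of summands of $V^{\otimes 2j}\otimes T(n)$ of total multiplicity $\ge c_n w(\gamma)$. Summing over all paths gives $b_{2j}^{G,V^{\otimes 2}\otimes T(n)} \ge c_n b_{2j}^{G,V} \ge c_n' j^{-\delta} 4^j$ by Theorem~\ref{Main}; absorbing the two extra tensor factors of $V$ and passing from $b_{2j}$ to $b_k$ for all $k$ (using that one more tensor factor of $V$ at most doubles the dimension and can only increase the count of indecomposable summands up to a bounded factor, by the same loop argument) yields $b_k^{G,T(n)}\ge c\,k^{-\delta}(\dim T(n))^k$ — but here one must be careful, since $\dim T(n)$ need not equal $2$; see the next paragraph.

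The resolution of the dimension discrepancy is the main obstacle, and it is handled as follows. One does \emph{not} try to prove $b_k^{G,T(n)}\ge c\, k^{-\delta}(\dim T(n))^k$ for a single $T(n)$ directly — that is false when $\dim T(n) > 2^k$-scale considerations bite — rather one uses that $W$ is a \emph{fixed} tilting module and compares $W^{\otimes k}$ with $V^{\otimes 2mk}$ where $2^{2m} = \dim V^{\otimes 2m}$ is chosen close to $\dim W$ in a logarithmic sense. The clean statement is: pick the largest summand $T(n_0)$ of $W$; then $W^{\otimes k}$ contains $T(n_0)^{\otimes k}$ as a summand, and $T(n_0)^{\otimes k}$ contains $V^{\otimes 2\lfloor ck\rfloor}$-many summands where $c = \log_2\dim T(n_0) \ge \log_2\dim W - o(1)$... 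This is delicate. The honest approach, which I expect to be what the paper does: show that for the \emph{specific} largest-dimensional indecomposable summand $T(N)$ of $W$, one has $\dim W \le (\dim T(N))^{?}$ — no. Rather, I would argue directly that $b_k^{G,W}\ge b_k^{G,T(N)}$ where $T(N)$ is chosen so that $\dim T(N)=\dim W$ is impossible in general, so instead: use Theorem~\ref{Main} applied not to $V$ but observe that by \eqref{binary rule}/\eqref{product} the generating-function analysis of \S2–\S5 goes through verbatim with $V^{\otimes 2}$ replaced by $V^{\otimes 2}\otimes T(N)$ for any fixed $N$ — the fusion graph is the same graph with finitely many vertices and edges modified near the origin, so the dominant eigenvalue-type asymptotics (the $k^{-\delta}4^k$ behaviour, which comes from the behaviour of $X_{2^s}$ near its radius of convergence) is unchanged, giving $b_{2jk}^{G,V^{\otimes 2}\otimes T(N)}\ge c\, k^{-\delta}4^k$; then since $W\supset T(N)$ and $\dim W$ appears only through the normalization $(\dim W)^k$, one checks $(\dim W)^k \le $ (number of summands) $\times$ (max dimension of a summand of $W^{\otimes k}$), and the max dimension of an indecomposable tilting summand grows at most like $(\dim W)^k$ while... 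Ultimately the cleanest route, and the one I would commit to writing, is: \textbf{(i)} reduce to $W$ indecomposable, $W=T(N)$; \textbf{(ii)} show $b_k^{G,T(N)}\cdot \max_m\{\dim T(m): T(m)\ \text{is a summand of}\ T(N)^{\otimes k}\} \ge (\dim T(N))^k$ trivially by counting dimensions; \textbf{(iii)} bound that max dimension above by $C_N\cdot 2^k\cdot(\text{polynomial})$ using \eqref{FC} (each tensor with $T(1)$ at most doubles highest weight, and $\dim T(m)=O(m)$ by \eqref{FC sum} since $|\supp(m)|\le 2^{v_2+1}$), so the max summand dimension is $O_N(k\cdot 2^k)$... but $(\dim T(N))^k / (k 2^k)$ is not $\gtrsim k^{-\delta}(\dim T(N))^k$ unless $\dim T(N)=2$. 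So step (iii) must be replaced by the genuinely correct bound that the number of summands of total the RIGHT size dominates — i.e. one really does need the refined count, not just dimensions. I therefore expect the actual proof to run the entire machinery of \S2–\S5 one more time with the root vertex $0$ replaced by the vertex $N$ (equivalently, with $V^{\otimes 2}$ replaced by $V^{\otimes 2}\otimes T(N)$ as the ``fusion object''), observe that the recursion \eqref{recurrence} and hence the radius of convergence and the exponent $\delta$ are intrinsic to the graph's tail and so unchanged, and extract $b_{k}^{G,T(N)}\gtrsim k^{-\delta}2^k$; then the passage from $2^k$ to $(\dim W)^k$ is achieved by tensoring: $W^{\otimes k}$ contains $(T(N))^{\otimes k}$, whose summands can be further resolved, and one iterates the lower bound $b_{k+1}^{G,U}\ge b_k^{G,U}$-type monotonicity together with $b^{G,U\otimes U'}\ge$ (product-type bound) to boost the base of the exponential from $2$ up to $\dim W$. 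The main obstacle, to be explicit in the writeup, is making precise that the exponent $\delta$ genuinely does not change when the base object is a general tilting module — this requires knowing that the generating function $X_{2^s}^{T(N)}(t)$ attached to the modified graph still satisfies a recursion whose iterates have the same critical behaviour as $F^{\circ s}$, which follows because the modification affects only finitely many low vertices while the asymptotic count is governed by the self-similar structure at infinity captured by \eqref{binary rule}.
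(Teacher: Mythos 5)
Your proposal correctly identifies the central difficulty (getting $(\dim W)^k$ rather than $2^k$ or $4^k$ as the exponential base), but you never resolve it. Every concrete route you sketch either produces the wrong base, or is a route you yourself flag as a dead end. The suggestion in your final paragraph---rerun the machinery of \S2--\S5 with the fusion graph ``re-rooted'' at $N$ or with $V^{\otimes 2}$ replaced by $V^{\otimes 2}\otimes T(N)$---still governs growth by the dominant eigenvalue $4$ of the $V^{\otimes 2}$ transfer operator, so it can only reproduce $k^{-\delta}4^{k}$ (equivalently $k^{-\delta}2^k$ per factor of $V$). Modifying finitely many vertices near the root does not move that eigenvalue. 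Likewise the ``tensoring'' boost you invoke at the end (``$W^{\otimes k}$ contains $T(N)^{\otimes k}$, $\dots$ iterate monotonicity'') is circular: if $W=T(N)$ these are literally the same object, and monotonicity statements of the form $b_{k+1}^{G,U}\ge b_k^{G,U}$ can never upgrade a base of $2$ to a base of $\dim W$.

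The paper resolves the dimension discrepancy with an idea that is absent from your proposal: the ring isomorphism $\Z[x]\to\Tilt(G)$, $Q(x)\mapsto Q(V)$. Writing $W=Q(V)$, the multiplicity $\mu_n(Q^k)$ of $T(2n)$ in $W^{\otimes k}$ is expressed, via the partial-fraction formula already developed in \S2, as
$$\mu_n(Q(x)^k) = \tfrac 12 \sum_{\{\beta\mid P^n(\beta)=0\}}\frac{Q(\beta^{1/2})^k+Q(-\beta^{1/2})^k}{\beta\,(P^n)'(\beta)},$$
and the dominant term is the one with $\beta=\beta_{s,1}$ closest to $4$, for which $Q(\beta_{s,1}^{1/2})\to Q(2)=\dim W$. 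The base $\dim W$ therefore enters \emph{automatically}, because the spectral parameter $\beta^{1/2}$ sits near $2$ and $Q$ evaluated there is the dimension. One then needs (i) the elementary Lemma on $Q$ (that $Q(2)=\dim W$, $Q'(2)>0$, and $|Q|<\dim W$ strictly on $(-2,2)$, with the parity caveat at $-2$), and (ii) a quantitative lower bound on $|(P^n)'(\beta_{s,1})|$ relative to the other roots to show the $\beta_{s,1}$ term really dominates; summing over $n\in[2^s,2^{s+1})$ supplies the $(3/2)^s\asymp k^{-\delta}(\text{base})^{\text{something}}$ correction. None of this spectral/polynomial machinery appears in your sketch, and without it---or some substitute mechanism you have not supplied---the proof does not close. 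Your reduction to indecomposable $W$ at the very start is also unnecessary in the paper's approach and would in fact lose information, since $Q(2)=\dim W$ is a statement about the whole $W$, not one summand.
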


The rest of the section is devoted to the proof of this result.  We begin by observing that the map $Q(x)\mapsto Q(V)$ defines an isomorphism from $\Z[x]$ to $\Tilt(G)$, 
the ring of virtual tilting representations of $G$.  Indeed, the formal character identifies $\Tilt(G)$ with $\Z$-linear combinations of $\chi_n(t)$
as computed in \eqref{FC}.  The $\Z$-linear combinations of the $\chi_n(\Z)$ comprise the ring of $\Z/2\Z$-invariant Laurent polynomials in $t$ with coefficients in $\Z$, where
the non-trivial element of the Weyl group $\Z/2\Z$ of $\SL_2$
maps $t\mapsto t^{-1}$.  It is clear that 
$$Q(x)\mapsto Q(\chi_1(t)) = Q(t+t^{-1})$$
gives an isomorphism $\Z[x]\to \Z[t,t^{-1}]^{\Z/2\Z}$.

\begin{lem}
\label{Q}
If $Q(x)\in \Z[x]$ is such that $W = Q(V)$ is a non-trivial effective representation, then 
\begin{enumerate}
\item $Q(2) = \dim W$, 
\item $Q'(2) > 0$
\item $|Q(x)| < \dim W$ for all $x\in (-2,2)$
\item $|Q(-2)| = \dim W$ if and only if $W$ purely even or purely odd,
i.e., is a direct sum of tilting representations whose highest weights are all even or are all odd.
\end{enumerate}
\end{lem}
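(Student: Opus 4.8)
The plan is to reduce all four statements to properties of the Laurent polynomial $Q(t+t^{-1})$ and the description of $\supp(n)$ given in \eqref{FC}. Write $W = Q(V) = \bigoplus_n T(n)^{\oplus m_n}$ with $m_n \ge 0$ (effectivity), not all zero (nontriviality). Part (1) is immediate: since $\dim T(n) = n+1$ and evaluating the formal character \eqref{FC} at $t=1$ gives $\dim T(n) = \sum_{m \in \supp(n)} m$, one checks directly that substituting $t=1$ into $\chi_n(t)$ yields $n+1$; hence $Q(2) = Q(\chi_1(1)) = \sum_n m_n (n+1) = \dim W$. Equivalently, $\dim W$ is the value at $t=1$ of the $W$-character, and $t+t^{-1}$ takes the value $2$ there.

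For part (3), I would use the substitution $t = e^{i\theta}$, so that $x = t+t^{-1} = 2\cos\theta$ ranges over $(-2,2)$ as $\theta \in (0,\pi)$. Then $\chi_n(e^{i\theta}) = \sum_{m \in \supp(n)} \frac{\sin m\theta}{\sin\theta}$, and the triangle inequality gives $|\chi_n(e^{i\theta})| \le \sum_{m} \frac{|\sin m\theta|}{|\sin\theta|} < \sum_m m = n+1$ for $\theta \in (0,\pi)$, the strict inequality holding because $|\sin m\theta| < m|\sin\theta|$ unless $m\theta \in \pi\Z$, and the various $m \in \supp(n)$ cannot all simultaneously be integer multiples of $\pi/\theta$ (indeed $2^j \in \supp(n)$ always, and one can argue that a single nonzero $\theta\in(0,\pi)$ cannot make every support element an integer multiple of $\pi$). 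Summing against the nonnegative multiplicities $m_n$ and using part (1) gives $|Q(x)| = |\sum_n m_n \chi_n(e^{i\theta})| < \sum_n m_n(n+1) = \dim W$.

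For part (2), differentiate: $Q'(2)$ is, up to the positive factor $\frac{dx}{dt}\big|_{t=1}$... more cleanly, write $g(\theta) = Q(2\cos\theta)$, the $W$-character in the variable $\theta$; then $g$ is a nonnegative-coefficient combination of the $\chi_n(e^{i\theta})$, each of which, expanded in $\cos$, has the property that its second derivative at $\theta = 0$ is negative (since $\frac{\sin m\theta}{\sin\theta} = m - \frac{m(m^2-1)}{6}\theta^2 + \cdots$ with $m(m^2-1)\ge 0$ and strictly positive once some $m \ge 2$ appears, which it does for any nontrivial $W$ other than a sum of copies of $T(0)$, a case excluded by effectivity/nontriviality once $T(0)$ alone is ruled out — and $T(0)$ alone has $Q(x)=1$, not effective-and-nontrivial in the intended sense; if $W=T(1)^{\oplus a}$ then $Q(x)=ax$ and $Q'(2)=a>0$ directly). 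Translating $\frac{d^2}{d\theta^2}g(0) < 0$ through $x = 2\cos\theta$, $\theta\mapsto 0$, gives $Q'(2) > 0$ after accounting for signs; alternatively, note $Q$ has nonnegative leading behavior and $Q(2) = \dim W > Q(x)$ for $x$ slightly less than $2$ by part (3), forcing $Q'(2) \ge 0$, and strictness follows because $Q$ is a polynomial that cannot be locally constant at $2$ unless it is globally constant.

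For part (4), evaluate at $x = -2$, i.e. $t = -1$, $\theta = \pi$. From \eqref{FC}, $\chi_n(-1) = \sum_{m\in\supp(n)} \frac{(-1)^{m}-(-1)^{-m}}{(-1)-(-1)} $ is a $0/0$ form; instead take the limit $\theta\to\pi$ of $\sum_m \frac{\sin m\theta}{\sin\theta}$, which equals $\sum_{m\in\supp(n)} (-1)^{m+1} m$. Now all elements of $\supp(n) = \{2^j \pm a_{j-1}2^{j-1} \pm \cdots \pm a_0\}$ have the same parity, namely the parity of $n+1 = 2^j + a_{j-1}2^{j-1} + \cdots + a_0$ (flipping signs on the $a_i 2^i$ changes the sum by even amounts), so $(-1)^{m+1}$ is constant over $m \in \supp(n)$, giving $\chi_n(-1) = \pm(n+1)$ with sign $(-1)^{n}$ (note $n+1$ and $n$ have opposite parity, so: $+(n+1)$ if $n$ even, $-(n+1)$ if $n$ odd — i.e. the sign records whether the highest weight $n$ is even or odd). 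Therefore $Q(-2) = \sum_n m_n \chi_n(-1) = \sum_{n \text{ even}} m_n(n+1) - \sum_{n \text{ odd}} m_n(n+1)$, and $|Q(-2)| = \dim W = \sum_n m_n(n+1)$ forces one of the two sums to vanish, i.e. all highest weights occurring are even, or all are odd — which is exactly the purely even / purely odd condition. Conversely if $W$ is purely even or purely odd the equality is clear.

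The main obstacle I anticipate is the strictness in part (3) (and the companion strictness $Q'(2) > 0$ in part (2)): one must rule out the degenerate possibility that $|\sin m\theta| = m|\sin\theta|$ simultaneously for every $m \in \supp(n)$ and every $n$ with $m_n > 0$, at a common $\theta \in (0,\pi)$. Since $2^j \in \supp(n)$ for every $n$ and $|\sin 2^j\theta| = 2^j|\sin\theta|$ already forces $\theta$ into a finite set of candidates, a short case-check (using that $\supp(n)$ also always contains an odd or non-power-of-two element whenever $n \ge 1$ is not itself a power of $2$ minus... ) finishes it; for $W$ involving only $T(1)$ one argues directly as above. Everything else is a routine evaluation of the explicit character formula \eqref{FC} at the two Weyl-group-fixed points $t = 1$ and $t = -1$ together with the nonnegativity of the multiplicities $m_n$.
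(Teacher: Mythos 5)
Your overall strategy coincides with the paper's: everything reduces to evaluating the character $Q(t+t^{-1})$ on the unit circle $t=e^{i\theta}$, using the formula \eqref{FC sum} and the nonnegativity of the multiplicities. Parts (1) and (4) of your argument are essentially identical to the paper's and correct; the parity observation about $\supp(n)$ is exactly what is needed.

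In part (3), however, your criterion for when $\lvert\sin m\theta\rvert = m\lvert\sin\theta\rvert$ is wrong, and the worry you flag as ``the main obstacle'' is therefore a red herring. Writing $\sin m\theta = \sin\theta\,U_{m-1}(\cos\theta)$, equality holds only when $\lvert U_{m-1}(\cos\theta)\rvert = m$, i.e.\ $\cos\theta=\pm 1$; for $m\ge 2$ the inequality is automatically strict for \emph{every} $\theta\in(0,\pi)$. Your ``unless $m\theta\in\pi\Z$'' condition points in the opposite direction---at such $\theta$ the left side is $0$, which is strictly less, not equal. Since $\supp(n)$ always contains $2^j\ge 2$ once $n\ge 1$, and nontriviality provides such an $n$ with $m_n>0$, strictness is immediate and no further case analysis is needed. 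The paper avoids this entirely by working with the $t$-coefficient structure: nontriviality forces either both coefficients of $1,t^2$ or both of $t,t^{-1}$ to be positive, and then the misalignment of $1,e^{2i\theta}$ (resp.\ $e^{i\theta},e^{-i\theta}$) for $\theta\in(0,\pi)$ gives strictness in the triangle inequality directly.

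For part (2), your primary argument via the Taylor expansion $\frac{\sin m\theta}{\sin\theta}=m-\frac{m(m^2-1)}{6}\theta^2+\cdots$ is correct and amounts to the same computation the paper packages as Dynkin's representation index. However, the alternative ``$Q$ cannot be locally constant at $2$ unless globally constant'' argument is fallacious: a polynomial like $4-(x-2)^2$ has a strict local maximum at $2$ with vanishing derivative, so knowing $Q(x)<Q(2)$ near $2$ only gives $Q'(2)\ge 0$, not $Q'(2)>0$. You should drop that alternative and rely on the second-derivative computation, which correctly yields $g''(0)=-2Q'(2)<0$ whenever some $m\ge 2$ appears.
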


\begin{proof}
The dimension of $W$ is obtained by substituting $t=1$ in the formal character $Q(t+t^{-1})$ of $W$, so it is $Q(2)$.

By the chain rule and l'H\^opital's rule,
$$Q'(2) = \lim_{\theta\to 0}\frac{\frac d{d\theta}Q(e^{i\theta}+e^{-i\theta})}{\frac d{d\theta}(e^{i\theta}+e^{-i\theta})} =\frac{\frac{d^2}{d\theta^2}Q(e^{i\theta}+e^{-i\theta})}{-2\cos\theta}\vert_{\theta=0} = \frac{I_2(W)}2,$$   
where $I_2(W)$ denotes Dynkin's representation index, which, for an $\SL_2$ representation with formal character $\sum a_n t^n$ is $\sum_n a_n^2$ \cite[(2.4)]{MP}.  This implies (2).

Since $W$ is non-trivial, $Q(t+t^{-1}) - \chi_n(t)$
has non-negative coefficients for some $n\ge 1$, so by \eqref{FC sum}, either the $1$ and $t^2$ coefficients of $Q(t+t^{-1})$ are both positive, or the
$t$ and $t^{-1}$ coefficients are both positive.  Either way,
$$|Q(e^{i\theta}+e^{-i\theta})| < Q(2)$$
for $0<\theta<\pi$, implying (3).

Finally, when $\theta=\pi$, $|Q(-2)| \le Q(2)$ with equality if and only if all the $m$ for which the $t^m$-coefficient of $Q(t+t^{-1})$ is positive have the same parity.
By \eqref{FC sum}, this occurs if and only if all the highest weights have the same parity.
\end{proof}

For each $k$, the multiplicity of $T(2n)$ as a (virtual) factor of $P(V)$ determines an additive map $\mu_n\colon \Z[x]\to\Z$.
By definition, $\mu_n(x^{2k})=x_{n,k}$ and $\mu_n(x^{2k+1})=0$.

Let $n=2^{s_1}+\cdots+2^{s_r}$ with $s_1>s_2>\cdots>s_r$.  For each integer $s\ge -2$ and integer $j$, let 
$$\beta_{s,j} = \zeta^j_{2^{s+2}}+2+\zeta^{-j}_{2^{s+2}},$$
with the convention that $\beta_{s,j}=4$ for $s<-2$ so that
$$(\beta_{s,j}-2)^2 = \beta_{s-1,j}$$
for all $s,j\in\Z$.
By Proposition \ref{F} and \eqref{closed form},
it follows that
\begin{equation}
\label{Res}
P_{s'}(\beta_{s,j}) = \beta_{s-s',j}-2,
\end{equation}
where $P_{s'}$ is defined  as in \eqref{P_s}.

Let
$$R_s = \{\beta_{s,1},\beta_{s,3},\beta_{s,5},\ldots\},$$
so $R_s$ is the set of roots of $P_s(x)$.

By \eqref{product} and Proposition~\ref{x_m formula}, $x_{n,k}$ is the $t^k$-coefficient of
$$\frac{t^n}{\prod_{i=1}^r \prod_{\beta\in R_{s_i}}(1-\beta t)}.$$
If $P^n$ denotes the product of the $P_{s_i}$, then its roots are contained in the set 
$\{\beta_{s_1,j}\mid j\in\Z\}$.
By Lemma~\ref{partial fractions},
$$X_n(t) = \sum_{i=0}^\infty \sum_{\{\beta\mid P^n(\beta)=0\}}\frac{\beta^{i+n-1}t^{i+n}}{(P^n)'(\beta)}.$$
Therefore, 
$$\mu_n(x^{2k}) = \sum_{\{\beta\mid P^n(\beta)=0\}}\frac{\beta^{k-1}}{(P^n)'(\beta)}.$$
It follows that if $Q(x)$ is a linear combination of even powers of $x$, then
\begin{equation}
\label{Q even}
\mu_n(Q(x)) = \sum_{\{\beta\mid P^n(\beta)=0\}}\frac{Q(\beta^{1/2})}{\beta (P^n)'(\beta)}.
\end{equation}
Since $\mu_n$ vanishes on odd powers of $x$, for general $Q$, we have
\begin{equation}
\label{Q general}
\mu_n(Q(x)) = \frac 12 \sum_{\{\beta\mid P^n(\beta)=0\}}\frac{Q(\beta^{1/2})+Q(-\beta^{1/2})}{\beta (P^n)'(\beta)}.
\end{equation}

\begin{lem}
If $n = 2^{s_1}+\cdots+2^{s_r}$ and $\beta_{s_1,j}$ is a root of $P^n$, then
$$|(P^n)'(\beta_{s_1,j} )|> \frac{2^{-(\log_2 j)^2}}{4j^2}|(P^n)'(\beta_{s_1,1})|.$$
\end{lem}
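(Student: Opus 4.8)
The plan is to obtain $|(P^n)'(\beta_{s_1,j})|$ in closed form and then bound its quotient with the $j=1$ value factor by factor. Let $m=v_2(j)$ be the $2$-adic valuation of $j$. By \eqref{Res} and the fact that the roots of each $P_{s_i}$ are the $2^{s_i}$ distinct numbers $\beta_{s_i,l}$ with $l$ odd (by \eqref{P_s} and Proposition~\ref{x_m formula}), the quantity $P_{s_i}(\beta_{s_1,j})=\beta_{s_1-s_i,j}-2$ vanishes precisely when $s_1-s_i=m$; hence there is a unique $a$ with $s_1-s_a=m$, the root $\beta_{s_1,j}$ lies on $P_{s_a}$ alone, and all roots of $P^n$ are simple, so $(P^n)'(\beta_{s_1,j})=P_{s_a}'(\beta_{s_1,j})\prod_{i\neq a}P_{s_i}(\beta_{s_1,j})$. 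Writing $j'=j/2^m$ (odd) and $\beta_{s_1,j}=\beta_{s_a,j'}$, Lemma~\ref{P_m prime} gives $|P_{s_a}'(\beta_{s_a,j'})|=2^{s_a}/|\sin(j'\pi/2^{s_a+1})|$, and \eqref{Res} gives $P_{s_i}(\beta_{s_a,j'})=2\cos(j'\pi/2^{s_a-s_i+1})$ for $i\neq a$ (for $s_i>s_a$ the convention forces this factor to be $\pm 2$, which is harmless). Since $j'/2^{s_a+1}=j/2^{s_1+1}$ and $j'/2^{s_a-s_i+1}=j/2^{s_1-s_i+1}$, these combine into
$$|(P^n)'(\beta_{s_1,j})|=\frac{2^{s_a}}{|\sin(j\pi/2^{s_1+1})|}\prod_{i\neq a}\Bigl|2\cos\frac{j\pi}{2^{s_1-s_i+1}}\Bigr|,$$
valid for any $j$ with $\beta_{s_1,j}$ a root of $P^n$; for $j=1$ we have $a=1$ and $s_a=s_1$.

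Dividing by the $j=1$ expression, the quotient is $2^{s_a-s_1}=2^{-m}$ times the sine ratio $|\sin(\pi/2^{s_1+1})|/|\sin(j\pi/2^{s_1+1})|$ times a product of cosine ratios indexed by $i$. Using Jordan's inequality $|\sin(\pi/2^{s_1+1})|\geq 2^{-s_1}$ together with $|\sin x|\leq x$, the sine ratio is $\geq 2/(\pi j)$, so the first factors together are $\geq 2^{1-m}/(\pi j)$. For the cosine product set $f_i=s_1-s_i$; over its range these are distinct positive integers, none equal to $m$ (the index $a$ being excluded, or else $m=0$), and $j\pi/2^{f_i+1}=j'\pi/2^{f_i+1-m}$. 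I would split on the size of $f_i$: if $f_i<m$ the numerator cosine is $\pm 1$ and the factor is $>1$; if $m<f_i\leq\ell:=\lceil\log_2 j\rceil$, a $2$-adic distance estimate (the argument $j'\pi/2^{f_i+1-m}$ has odd ``numerator'' over a power of $2$, hence is at distance $\geq\pi/2^{f_i+1-m}$ from the zeros of $\cos$) gives $|\cos(j\pi/2^{f_i+1})|\geq 2^{m-f_i}$, whence the product of these factors is $\geq 2^{-\binom{\ell-m+1}{2}}$ because the $f_i$ are distinct; finally if $f_i>\ell$ then $j\pi/2^{f_i+1}\leq\pi/4$, so $\cos(j\pi/2^{f_i+1})\geq 1-\pi^2 j^2/(8\cdot 4^{f_i})$, and $\sum_{f>\ell}\pi^2 j^2/(8\cdot 4^f)\leq\pi^2/24<\tfrac12$ forces the tail product to exceed $\tfrac12$. (When $a\neq 1$ there is an extra factor $2$ in the numerator and a factor $2\cos(\pi/2^{m+1})<2$ in the denominator; their quotient exceeds $1$, so they only help.)

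Combining the bounds, the quotient is $\geq 2^{-m-\binom{\ell-m+1}{2}}/(\pi j)$, so the assertion reduces to the elementary inequality
$$(\log_2 j)^2+\log_2 j+2-\log_2\pi\ \geq\ m+\binom{\ell-m+1}{2}.$$
For fixed $\ell$ the right-hand side, as a function of $m\in\{0,\dots,\ell\}$, is maximal at $m=0$, where it equals $\binom{\ell+1}{2}$; using $\log_2 j\geq\ell-1$ the left-hand side is $\geq\ell^2-\ell+2-\log_2\pi$, and $\ell^2-\ell+2-\log_2\pi\geq\binom{\ell+1}{2}$ is equivalent to $\ell^2-3\ell+4-2\log_2\pi\geq 0$, which holds for all $\ell\geq 3$. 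The remaining cases $\ell\leq 2$, i.e. $j\in\{1,2,3,4\}$, are checked directly.

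The crux is the cosine product estimate: one must recognize that the only factors that can be genuinely small are those with $f_i$ in the short window $(m,\ell]$ — at most $\ell-m\approx\log_2 j$ of them, each bounded below by a power of $2$ with a distinct exponent, producing the $2^{-(\log_2 j)^2}$-type loss — while the infinitely many factors with larger $f_i$ are so close to $1$ that their product stays bounded below by an absolute constant. Everything else (the identification of $a$, the degenerate factors when $s_i>s_a$ or $a\neq 1$, and the final elementary inequality) is routine bookkeeping once the closed form above is in hand.
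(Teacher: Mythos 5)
Your proof is correct and shares the paper's core ingredients — the factorization $(P^n)'(\beta_{s_1,j})=P_{s_a}'(\beta_{s_1,j})\prod_{i\neq a}P_{s_i}(\beta_{s_1,j})$, the closed forms from Lemma~\ref{P_m prime} and \eqref{Res}, Jordan's inequality for the sine ratio, and the $2$-adic structure of the cosine arguments — but organizes the estimate differently. The paper sandwiches the full cosine product $\prod_{i\neq a}|P_{s_i}(\beta_{s_1,j})|$ between $2^{r-3}2^{-(\log_2 j)^2}$ and $2^{r-1}$ (uniformly in $j$), then divides the lower bound at $j$ by the upper bound at $1$ and finishes with $2^{s_a-s_1}\ge 1/j$ and $\sin(j\theta)\le j\sin\theta$. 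You instead bound the \emph{ratio} of the two cosine products factor by factor, splitting $f_i=s_1-s_i$ into the three ranges $f_i<m$, $m<f_i\le\ell$, $f_i>\ell$ with $m=v_2(j)$, $\ell=\lceil\log_2 j\rceil$: the middle range concentrates the $2$-power loss via the $2$-adic distance estimate, and the tail is controlled by the explicit geometric comparison $1-\sum_{f>\ell}\pi^2 j^2/(8\cdot 4^f)\ge 1-\pi^2/24>1/2$, which makes the tail estimate more transparent than the paper's passing appeal to $\prod_{l\ge1}(1-2^{-l})$. You get the tighter intermediate bound $2^{-m-\binom{\ell-m+1}{2}}/(\pi j)$ and reduce the lemma to a short discrete inequality in $(\ell,m)$, at the cost of a separate check for $\ell\le 2$. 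Your handling of the mismatched index (the extra factor $2/(2\cos(\pi/2^{m+1}))>1$ when $a\neq1$) is correct. Two cosmetic remarks: when you replace a cosine ratio by its numerator you are implicitly using $|\cos(\pi/2^{f_i+1})|\le 1$, which should be stated; and the lemma asserts a strict inequality, which your argument does deliver (the tail bound is strict), but your final reduction is written with $\ge$ — worth flagging.
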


\begin{proof}
If $j  \le 2^{s-s'}$, then by \eqref{Res},
$$P_{s'}(\beta_{s,j}) = 2\cos \frac{2\pi j}{2^{s+2-s'}} = 2\sin \frac{2\pi(2^{s-s'}-j)}{2^{s+2-s'}} \ge 2(1-j 2^{s'-s})$$
as $\sin x\ge \frac {2x}\pi$ on $[0,\pi/2]$.  For all $s'\neq s$ and $j$ odd, $P_{s'}(\beta_{s,j})$ is not zero and is twice the sine of an integer multiple of $\frac{2\pi}{2^{s+2-s'}}$,
so it is at least $2^{1+s'-s}$ in absolute value.  For a given $j$, there are  $\lfloor \log_2 j\rfloor$ choices of $s'<s$ for which $j\ge 2^{s-s'}$, and 
the lower bounds they give are $2^{-0},2^{-1},\ldots,2^{1-\lfloor\log_2 j\rfloor}$.
From these observations, we see that
$$2^{r-1}\ge \prod_{\{i\mid P_{s_i}(\beta_{s_1,j})\neq 0\}} |P_{s_i}(\beta_{s_1,j})| \ge 2^{r-1} 2^{-(\log_2 j)^2}\prod_{l=1}^\infty (1-2^{-l})\ge 2^{r-3} 2^{-(\log_2 j)^2}.$$
On the other hand, by Lemma~\ref{P_m prime}, if $\beta_{s,j}$ is a root of $P_{s'}(x)$, then
$$|P'_{s'}(\beta_{s,j})| = \frac{2^{s'}}{\sin \frac{j\pi}{2^{s+1}}}.$$
Combining these facts, we see that if $\beta_{s_1,j}$ is a root of $P_{s_i}(x)$, then
$$\frac{2^{s_i}}{\sin \frac{j\pi}{2^{s_1+1}}}2^{r-1}\ge |(P^n)'(\beta_{s_1,j})| \ge \frac{2^{s_i}}{\sin \frac{j\pi}{2^{s_1+1}}}2^{r-3} 2^{-(\log_2 j)^2}.$$
If $\beta_{s_1,j}$ is a root of $P_{s_i}$, then $j$ is divisible by $2^{s_1-s_i}$, so $2^{s_i-s_1}\ge j^{-1}$, and
\begin{equation}
\label{Pnprime}
\frac{|(P^n)'(\beta_{s_1,j})|}{|(P^n)'(\beta_{s_1,1})|} \ge \frac{2^{-(\log_2 j)^2}}{4j^2}.
\end{equation}
\end{proof}

We can now prove the theorem.

\begin{proof}
Let $W = Q(V)$.  We assume first that $W$ is neither even nor odd.  In this case, by Lemma~\ref{Q}, there exist $\epsilon,c_1>0$ such that for all $v\in[4-\epsilon,4]$
and all $u<v$, we have
$$Q(\sqrt v)-Q(\sqrt u) \ge c_1(v-u).$$
We claim that $W$ determines a positive integer $h$ such that for all sufficiently large $s$, all $n\in [2^s,2^{s+1})$, all $k\in [4^{s+h},4^{s+h+1})$, and
all $j\in [1,2^s)$,
\begin{equation}
\label{claim}
\frac{Q(\beta^{1/2}_{s,j})^k}{|\beta_{s,j}(P^n)'(\beta_{s,j})|}\le 4^{1-j}\frac{Q(\beta^{1/2}_{s,1})^k}{|\beta_{s,1}(P^n)'(\beta_{s,1})|}.
\end{equation}
Indeed for $s$ sufficiently large, $\beta_{s,1}>v$, so
\begin{align*}
\frac{Q(\beta^{1/2}_{s,1})^k}{Q(\beta^{1/2}_{s,j})^k} &\ge \bigl(1+\frac{c_1(\beta_{s,1}-\beta_{s,j})}{Q(2)}\bigr)^k  \\
&\ge (1+c_2(\beta_{s,1}-\beta_{s,j}))^{4^{s+h}}\\
&= (1+c_2(4\sin\frac{(j+1)\pi}{2^{s+2}}\sin\frac{(j-1)\pi}{2^{s+2}}))^{4^{s+h}}\\
&\ge (1+c_2\frac{j^2-1}{4^{s+1}})^{4^{s+h}}\ge \exp(c_2(j^2-1)4^{h-1})
\end{align*}
for some $c_2>0$ which does not depend on $j$ or $h$.  For $j\le 2^s$, also $\beta_{s,j}\ge 2$, so 
$$\frac{\beta{s,j}}{\beta_{s,1}} \ge \frac 12.$$
Thus \eqref{claim} follows easily from \eqref{Pnprime} when $h$ is sufficiently large.  By part (3) of Lemma~\ref{Q},
the terms $Q(\beta^{1/2}_{s,j})^k$ for $j\ge 2^s$ and $Q(-\beta^{1/2}_{s,j})^k$ for any $j$
are bounded above by $((1-c_3)\dim W)^k$ for some $c_3 > 0$ which does not depend on $s$, $j$, or $k$.
Therefore,
$$\frac{Q(\pm\beta^{1/2}_{s,j})^k |\beta_{s,1}(P^n)'(\beta_{s,1})|}{Q(\beta^{1/2}_{s,1})^k|\beta_{s,j}(P^n)'(\beta_{s,j})|}$$
is bounded above by a term of the form $(1-c_4)^{4^{s+h}}$ for some $c_4>0$.
We conclude that for $s$ and $h$ sufficiently large, 
\begin{align*}
\mu_n(Q^k) \ge \frac 12 \frac{Q(\beta_{s,1}^{1/2})^k}{\beta_{s,1}(P^n)'(\beta_{s,1})}&\ge  2^{1-r}2^{-s-4}\sin\frac \pi{2^{s+1}}Q(\beta_{s,1}^{1/2})^k\\
&\ge 2^{1-r}2^{-2s-4} Q(\beta_{s,1}^{1/2})^k
\end{align*}
for $k\in [4^{s+h},4^{s+h+1})$.  Summing over all $n\in [2^s,2^{s+1})$, we get
$$b_k^{G,W}\ge \sum_{n=2^s}^{2^{s+1}-1}\mu_n(Q^k)  \ge \bigl(\frac32\bigr)^{s-1}2^{-2s-4}Q(\beta_{s,1}^{1/2})^k = \frac 1{16}\bigl(\frac38\bigr)^sQ(\beta_{s,1}^{1/2})^k.$$
As $Q'(2) > 0$, there exists $c_5>0$ such that $Q(\beta^{1/2}_{s,1}) > \dim W - c_5(4^{-s})$, so for fixed $h$ and $k< 4^{s+h}$, we can bound $4^{-k} Q(\beta_{s,1}^{1/2})^k$ away from $0$.

Finally we consider the cases that $W$ is even or odd.  In the even case (i.e., when $Q(x)$ is an even function) we use \eqref{Q even} instead of \eqref{Q general}, so we sum only over the non-negative square roots of the $\beta_{s,j}$.  Since part (3) of Lemma~\ref{Q} still holds, it remains true that
the terms $Q(\beta^{1/2}_{s,j})^k$ for $j\ge 2^s$ 
are bounded above by $((1-c_3)\dim W)^k$ for some $c_3 > 0$.
For $W$ odd and for even $k$, $Q^k$ is an even function, and we proceed as in the even case.  Finally, for $W$ odd and $k$ odd, we use the fact, true for all groups and all representations, that $b_{k+1}^{G,W} \ge b_k^{G,W}$.

\end{proof}

\end{document}